\pdfoutput=1
\documentclass[leqno]{article} 
%
\usepackage[vcentering]{geometry}
%
\usepackage{graphicx,xcolor}
\usepackage{tikz}
\usetikzlibrary{cd}
%
\usepackage[backend=biber,sorting=nyt]{biblatex}
\addbibresource{math_ref.bib}
%
\usepackage{amsmath,amssymb}
\usepackage{rsfso}
%

%
\allowdisplaybreaks[1]
%
\setcounter{tocdepth}{1}
%
\numberwithin{equation}{section}
%
\usepackage[%
    bookmarksnumbered=true%
]{hyperref}
%
\usepackage{amsthm}
\theoremstyle{plain}
\newtheorem{Th}{Theorem}[section]
\newtheorem{Prop}[Th]{Proposition}
\newtheorem{Lem}[Th]{Lemma}
\newtheorem{Cor}[Th]{Corollary}

\theoremstyle{definition}
\newtheorem{Def}[Th]{Definition}
\newtheorem{Exm}[Th]{Example}
\newtheorem{Rem}[Th]{Remark}

%
\newcommand{\lrbrack}{\mathopen{\rbrack}}
\newcommand{\rlbrack}{\mathclose{\lbrack}}
\newcommand{\Par}{\mathop{\mathrm{Par}}}
\newcommand{\cadlag}{c\`{a}dl\`{a}g}
\usepackage{newtxtext}
\usepackage[varbb]{newtxmath}

\renewcommand{\thefootnote}{\fnsymbol{footnote}}
\begin{document}
\title{
    It\^{o}--F\"{o}llmer Calculus in Banach Spaces II: \\
    Transformations of Quadratic Variations
}
\author{Yuki Hirai}
\date{}
\maketitle

\begin{abstract}
    In this paper, we study properties of quadratic variations 
    of c\`{a}dl\`{a}g paths within the framework of the It\^{o}--F\"{o}llmer calculus
    in Banach spaces.
    We prove a $C^1$-type transformation formula for
    quadratic variations. 
    We also investigate relations between tensor and scalar
    quadratic variations.
\end{abstract}

\footnotetext{\emph{2020 Mathematics Subject Classification}. Primary 60H05, 60H99; Secondary 26A99, 28B99.}
\footnotetext{\emph{Key words and phrases.} It\^{o}--F{\"o}llmer integral, pathwise stochastic calculus, quadratic variation}
\renewcommand{\thefootnote}{\arabic{footnote}}

\tableofcontents

\section{Introduction}
The It\^{o}--F\"{o}llmer calculus,
which originated in F\"{o}llmer~\cite{Foellmer_1981},
is a deterministic counterpart to classical It\^{o}'s stochastic calculus.
Recently, the It\^{o}--F\"{o}llmer calculus has seen increasing developments,
receiving much attention from the viewpoint of its financial applications
(see, e.g., Sondermann~\cite{Sondermann_2006},
Schied~\cite{Schied_2014,Schied_2016},
Mishura and Schied~\cite{Mishura_Schied_2016},
Schied, Speiser, and Voloshchenko~\cite{Schied_Speiser_Voloshchenko_2018},
Chiu and Cont~\cite{Chiu_Cont_2018},
Hirai~\cite{Hirai_2019},
and Cont and Perkowski~\cite{Cont_Perkowski_2019}).
In addition, in the spirit of functional It\^{o} calculus
(Dupire~\cite{Dupire_2009}
and Cont and Fourni\'{e}~\cite{Cont_Fournie_2010b,Cont_Fournie_2013}),
It\^{o}--F\"{o}llmer calculus has been extended
to path-dependent functionals;
see Cont and Fourni\'{e}~\cite{Cont_Fournie_2010}
and Ananova and Cont~\cite{Ananova_Cont_2017}.
Extensions of the It\^{o}--F\"{o}llmer calculus
in terms of local times have been investigated in
Davis, Ob{\l}\'{o}j, and Raval~\cite{Davis_Obloj_Raval_2014},
Davis, Ob{\l}\'{o}j, and Siorpaes~\cite{Davis_Obloj_Siorpaes_2018},
Kim~\cite{Kim_2022},
{\L}ochowski et al.~\cite{Lochowski_Obloj_Promel_Siorpaes_2021b},
and Hirai~\cite{Hirai_2016,Hirai_2021}.

Stochastic integration in infinite dimensions 
appears naturally 
when handling stochastic partial differential equations (SPDEs).
SPDEs have been applied to various areas,
including finance and physics.
One can refer to, for example, Da Prato and Zabczyk~\cite{DaPrato_Zabczyk_2014} and 
Peszat and Zabczyk~\cite{Peszat_Zabczyk_2007} for SPDEs and their applications. 
To the best of our knowledge, however,
the It\^{o}--F\"{o}llmer calculus in infinite dimensions
has not yet been sufficiently studied.
In this series of articles, we aim to develop a theory of 
It\^{o}--F\"{o}llmer calculus in Banach spaces.
In our previous paper~\cite{Hirai_2022a},
we proved the It\^{o} formula for Banach space-valued 
paths with suitably defined quadratic variations.
We also relaxed the assumption on the sequence of
partitions along which we consider the quadratic variation.

In this paper, we study more detailed properties of 
quadratic variations,
focusing on their transformations and representations.
In particular, we first study the relations between 
tensor, scalar, and other quadratic variations.
We provide some linear transformation formulae,
one of which implies that the existence of
projective tensor quadratic variation implies
the existence of all quadratic variations with respect to bounded bilinear maps.
We then prove the trace representation formula 
of scalar quadratic variations by tensor one
in a Hilbert space setting.
We also show an integral representation formula 
of quadratic variation in a Banach space with 
the Radon--Nikodym property (RNP).
Both trace and integral representation 
formulae of quadratic variations are 
interpreted as a pathwise version of 
classical results (see, for example, Metivier~\cite{Metivier_1982})
in stochastic calculus in Hilbert spaces.
Next, we show $C^1$-type transformation formulae for quadratic variations.
Roughly speaking, these results 
claim that the path $t \mapsto f(t,X_t)$
has quadratic variation whenever $X$ has quadratic variation
and $f$ has $C^1$-smoothness in $x$ and finite variation in $t$.
These can be interpreted as a method of generating
new {\cadlag} paths having quadratic variation
from the given path $X$.
Furthermore, we investigate quadratic 
variations of a path $\int_0^{\cdot} Df(X_{s-}) \mathrm{d}X_s$
defined by the It\^{o}--F\"{o}llmer integral.
As an application, we obtain a rough-FV decomposition formula 
for a path of the form $f(X)$.

Stochastic integration theories in infinite dimensions
have been investigated in many studies (see, e.g., Kunita~\cite{Kunita_1970},
Metivier~\cite{Metivier_1972},
Pellaumail~\cite{Pellaumail_1973},
Yor~\cite{Yor_1974},
Gravereaux and Pellaumail~\cite{Gravereaux_Pellaumail_1974},
Metivier and Pistone~\cite{Metivier_Pistone_1975},
Meyer~\cite{Meyer_1977a},
Metivier and Pellaumail~\cite{Metivier_Pellaumail_1980b},
Gy\"{o}ngy and Krylov~\cite{Gyongy_Krylov_1980,Gyongy_Krylov_1982},
Gy\"{o}ngy~\cite{Gyongy_1982},
Metivier~\cite{Metivier_1982},
Pratelli~\cite{Pratelli_1988},
Brooks and Dinculeanu~\cite{Brooks_Dinculeanu_1990},
Dinculeanu~\cite{Dinculeanu_2000}, and
van Neerven, Veraar, and Weis~\cite{vanNeerven_Veraar_Weis_2007a,vanNeerven_Veraar_Weis_2015}).
Our method can be interpreted as a deterministic analogy
of some of these classical theories.
Note that Di Girolami, Fabbri, and Russo~\cite{DiGirolami_Fabbri_Russo_2014}
and Di Girolami and Russo~\cite{DiGirolami_Russo_2014}
treat quadratic covariations and stochastic integrations in Banach spaces
with a different approach, called stochastic calculus via regularization.
These are considered to be regularization counterparts
of F\"{o}llmer's discretization approach.
See the survey by Russo and Vallois~\cite{Russo_Vallois_2007}
for details of stochastic calculus via regularization.
We also refer to \v{C}oupek and Garrido-Atienza~\cite{Coupek_GarridoAtienza_2021}
for linear equations in a Hilbert space driven by irregular scalar noise,
which works within the framework of F\"{o}llmer's calculus.

Before describing the results of this paper,
we give a brief outline of the
It\^{o}--F\"{o}llmer calculus in Banach spaces
developed in our previous study~\cite{Hirai_2022a}.
Let $(\pi_n)_{n \in \mathbb{N}}$ be a sequence of partitions of
$\mathbb{R}_{\geq 0}$ and
let $X \colon \mathbb{R}_{\geq 0} \to E$
be a c\`{a}dl\`{a}g path in a Banach space $E$.
Moreover, let $B \colon E \times E \to F$ be a
bounded bilinear map into another Banach space $F$.
We say that $X$ has strong (resp. weak) $B$-quadratic variation
along $(\pi_n)$ if there is a c\`{a}dl\`{a}g path
$Q_B(X,X) \colon \mathbb{R}_{\geq 0} \to F$
of finite variation satisfying the following conditions:
\begin{enumerate}
    \item the sequence $\sum_{\lrbrack r,s \rbrack \in \pi_n} B(X_{s \wedge t} - X_{r \wedge t},X_{s \wedge t} - X_{r \wedge t})$ 
        converges to $Q_B(X,X)_t$
        in the norm (resp. weak) topology for all $t \geq 0$;
    \item the equation $\Delta [X,Y]_t = B(\Delta X_t,\Delta X_t)$ holds for all $t \geq 0$.
\end{enumerate}
A typical example of the bilinear map $B$ in the definition above
is the canonical bilinear map into a tensor product of Banach spaces.
Let $\alpha$ be a reasonable crossnorm on the algebraic
tensor product $E \otimes E$, and 
let $E \widehat{\otimes}_{\alpha} E$ be the
completion of $E \otimes E$ with respect to $\alpha$.
Then the strong (resp. weak) quadratic variation $Q_{\otimes}(X,X)$,
where $\otimes \colon E \times E \to E \widehat{\otimes}_{\alpha} E$
is the canonical bilinear map,
is called the strong (resp. weak) $\alpha$-tensor quadratic variation of $X$
and is denoted by ${^\alpha [X,X]}$.
Next, we define the scalar quadratic variation $Q(X)$ of $X$ along $(\pi_n)$
as a nonnegative increasing path satisfying the following conditions:
\begin{enumerate}
    \item the sequence $\sum_{\lrbrack r,s \rbrack \in \pi_n} \lVert X_{s \wedge t} - X_{r \wedge t} \rVert^2$ 
        converges to $Q(X)_t$ for all $t \geq 0$;
    \item the equation $\Delta Q(X)_t = \lVert \Delta X_t \rVert^2$ holds for all $t \geq 0$.
\end{enumerate}
Note that the scalar quadratic variation coincides
with the quadratic variation $Q_{\langle \phantom{x},\phantom{x} \rangle}$
if $E$ is a Hilbert space with inner product $\langle \phantom{x},\phantom{x} \rangle$. 
Moreover, we say that $X$ has finite 2-variation along $(\pi_n)$ if
\begin{equation*}
    V((\pi_n);X)_t
    =
    \sup_{n \in \mathbb{N}}
        \sum_{\lrbrack r,s \rbrack \in \pi_n} \lVert X_{s \wedge t} - X_{r \wedge t} \rVert^2
    < \infty
\end{equation*} 
for all $t \geq 0$.
Now suppose that $(\pi_n)$ satisfies certain nice conditions
associated with the path $X$,
namely condition~(C) and left-approximation (see Definition~\ref{2b}),
and $X$ has strong or weak $\alpha$-tensor quadratic variation
and finite 2-variation along $(\pi_n)$.
If a function $f \colon E \to G$ into a Banach space
has $C^2$ smoothness in an appropriate sense
(see Corollary~\ref{2fc} for the precise definition),
then the composite function $f \circ X$ satisfies
\begin{align*} 
    f(X_t) - f(X_0) 
    & =
    \int_0^t D f (X_{s-})\mathrm{d}X_s
    + \frac{1}{2} \int_0^t D^2 f(X_{s-})\mathrm{d}{^\alpha [X,X]}^{\mathrm{c}}_s
    + \sum_{0 < s \leq t} \left\{ \Delta f(X_{s}) - D f(X_{s-}) \Delta X_s \right\}. 
\end{align*}
The first integral on the right-hand side
is defined by
\begin{equation*}
    \lim_{n \to \infty} \sum_{\lrbrack u,v \rbrack \in \pi_n} 
        D f(X_{u})(X_{v \wedge t} - X_{u \wedge t})
    =
    \int_0^t D f (X_{s-})\mathrm{d}X_s,
\end{equation*}
where the limit exists in the strong or weak topology
corresponding to the convergence of the quadratic variation. 
We call $\int_0^t D f (X_{s-})\mathrm{d}X_s$
the It\^{o}--F\"{o}llmer integral along $(\pi_n)$.

The remainder of this paper is organized as follows.
In Section~2, we present the basic notation and terminology
used throughout the paper.
The remaining sections are divided into two parts.
In the first part, which consists of Sections~3 and 4,
we study relations between various quadratic variations
introduced in our previous paper~\cite{Hirai_2022a}.
Sections~5 and 6 form the second part,
where we focus on the formulation and the proof 
of our $C^1$-transformation formula
and its consequences. 

In Section~3, we provide some results related
to linear transformations of paths and quadratic variations.
It turns out that the projective tensor quadratic variation is the 
strongest notion in the sense of Proposition~\ref{3c}. 
We also show the trace representation formula of scalar quadratic variations
in Hilbert spaces (Theorem~\ref{3g}),
which claims that the scalar quadratic variation of 
a Hilbert space-valued path is
the trace of the projective tensor quadratic variation.
In Section~4, we show integral representation formulae of 
quadratic variations with respect to scalar quadratic variations.
Namely, if $X$ has scalar quadratic variation
and weak $B$-quadratic variation $Q_B$ in a Banach space $G$ with the RNP,
there is a $G$-valued locally $Q(X)$-integrable function $q_B$ such that 
\begin{equation*}
    Q_B(X,X)_t = \int_{\lrbrack 0,t \rbrack} q_B(s) \mathrm{d}Q(X)_s, \qquad \forall t \geq 0
\end{equation*}
(Theorem~\ref{4i}).
For this purpose, we also show the absolute continuity of 
$Q_B(X,X)$ with respect to the scalar quadratic variation
(Proposition~\ref{4d}).

The $C^1$-transformation formulae of quadratic variations
are given in Section~5.
Section~5.1 is the preliminary part of this section.
To formulate our $C^1$-transformation formulae, 
we first introduce some concepts
such as the variation of a family of {\cadlag} paths,
conditions on a sequence of partitions (Condition~(UC)),
and spaces of functions with certain continuity or differentiability.
We also provide auxiliary results 
regarding a {\cadlag} path in such a function space.
The main results of the second part,
$C^1$-transformation formulae for quadratic variations,
are given in Section~5.2
(Theorem~\ref{5.2b} and Corollaries~\ref{5.2kb} and \ref{5.2l}).
The statement of Corollary~\ref{5.2kb} is roughly as follows:
Let $E$ and $F$ be Banach spaces and $\alpha$ be a uniform crossnorm.
We consider two {\cadlag} paths
$X \colon \mathbb{R}_{\geq 0} \to E$
and $f \colon \mathbb{R}_{\geq 0} \to C^1_{\mathcal{K}}(E,F)$,
where $C^1_{\mathcal{K}}(E,F)$ is the space of G\^{a}teaux
differentiable functions from $E$ to $F$ satisfying additional conditions
(see Definition~\ref{5.1d}).
Assume that the family $(f(\,\cdot\, ,x); x \in K)$
has uniformly finite variation for each compact set $K \subset E$
and that $(\pi_n)$ satisfies (UC) and is a left-approximation sequence for $X$ and $f$.
If $X$ has strong (resp. weak) $\alpha$-tensor quadratic variation
and finite 2-variation along a sequence of partitions $(\pi_n)$,
then $f(\cdot,X_\cdot)$ has the strong (resp. weak) $\alpha$-tensor quadratic variation given by
\begin{equation*}
    {^\alpha [f(\cdot,X_{\cdot}),f(\cdot,X_{\cdot})]}_t
    =
    \int_{\lrbrack 0,t \rbrack}
        D_x f(s-,X_{s-})^{\otimes 2} \mathrm{d}{^\alpha [X,X]}^{\mathrm{c}}_s
    + \sum_{0 < s \leq t} (\Delta f(s,X_s))^{\otimes 2}.
\end{equation*}
As a consequence of this and the It\^{o} formula,
we see that the path $Y_t = \int_0^t Df(A_{s-},X_{s-}) \mathrm{d}X_s$
has $\alpha$-tensor quadratic variation represented as 
\begin{equation*}
    {^\alpha \lbrack Y,Y \rbrack}_t
    = \int_{\lrbrack 0,t \rbrack} Df(A_{s-},X_{s-})^{\otimes 2}\mathrm{d}{^\alpha [X,X]}_s
\end{equation*}
(see Theorem~\ref{8b}),
where $A$ is a {\cadlag} path of finite variation and $f$ is a function with a
certain $C^{1,2}$-smoothness.
We can directly deduce from this theorem that 
the c\`{a}dl\`{a}g path $f(A,X)$ has the
decomposition $f(X) = Y + C + D$, where 
$Y$ is the path defined above, $C$ is a continuous path of 
finite variation, and $D$ is a purely discontinuous path 
of finite variation (Corollary~\ref{8d}).  

Some auxiliary results are provided in the appendices.
Appendix~A gives remarks on vector integration and the RNP.
In Appendix B, we discuss the problem of controlling
the oscillation of a family of {\cadlag} paths by a sequence of partitions.
This discussion is related to Condition~(UC) in Section~5.

\section{Settings}

The aim of this section is to present 
basic concepts in the It\^{o}--F\"{o}llmer calculus in Banach spaces
introduced in the preceding paper~\cite{Hirai_2022a}.

First, we introduce the basic notation and terminology used throughout this paper.
Let $\mathbb{N} = \{ 0,1,2, \dots \}$ and let $\mathbb{R}$ be the set of real numbers.
In this article, the scalar field of a linear space
is always assumed to be $\mathbb{R}$.
Given two Banach spaces $E$ and $F$,
let $\mathcal{L}(E,F)$ denote the space of bounded linear maps from $E$ to $F$.
If, in addition, $G$ is another Banach space,
we define $\mathcal{L}^{(2)}(E,F;G)$ as the 
space of bounded bilinear maps from $E \times F$ to $G$.
As usual, we regard $\mathcal{L}(E,F)$ and $\mathcal{L}^{(2)}(E,F;G)$ as Banach spaces
endowed with the following respective norms:
\begin{equation*}
    \lVert A \rVert = \sup_{x \in E \setminus \{ 0 \}} \frac{\lVert Ax \rVert}{\lVert x \rVert}, \qquad
    \lVert B \rVert = \sup_{x \in E \setminus \{ 0 \}, y \in F \setminus \{ 0 \}} \frac{\lVert B(x,y) \rVert}{\lVert x \rVert \lVert y \rVert}.
\end{equation*}

Let $[0,\infty \rlbrack = \mathbb{R}_{\geq 0} = \{ r \in \mathbb{R} \mid r \geq 0 \}$
and let $E$ be a Banach space.
A \emph{c\`{a}dl\`{a}g} path in $E$
is a function $X \colon \mathbb{R}_{\geq 0} \to E$
that is right-continuous at every $t \geq 0$
and has a left limit at every $t > 0$.
We also use the term \emph{right-regular} to describe the same property.
The symbol $D(\mathbb{R}_{\geq 0},E)$ or $D([0,\infty \rlbrack, E)$ denotes the set of 
all c\`{a}dl\`{a}g paths in $E$.
If $X$ is an $E$-valued {\cadlag} path, we define
\begin{equation*}
    X(t-)
    = \lim_{s \uparrow\uparrow t} X(s)
    \coloneqq \lim_{s \to t, s < t} X(s), \qquad
    \Delta X(t) = X(t) - X(t-).
\end{equation*}
We also use symbols $X_t$, $X_{t-}$, and $\Delta X_t$
to indicate the values $X(t)$, $X(t-)$, and $\Delta X(t)$, respectively.
Next, let
\begin{gather*}
    D(X) = \{ t \in \mathbb{R}_{\geq 0} \mid \lVert \Delta X_t \rVert \neq 0 \}  \\
    D_{\varepsilon}(X) = \{ t \in \mathbb{R}_{\geq 0} \mid \lVert \Delta X_t \rVert > \varepsilon \}  \\
    D^{\varepsilon}(X) = D(X) \setminus D_{\varepsilon}(X)
        = \{ t \in \mathbb{R}_{\geq 0} \mid 0 < \lVert \Delta X_t \rVert \leq \varepsilon \}.
\end{gather*}
Given a set $D \subset \lbrack 0,\infty \rlbrack$
without accumulation points
and a c\`{a}dl\`{a}g path $X$, we define
\begin{equation*}
    J_D(X)_t = J(D;X)_t = \sum_{0 < s \leq t} \Delta X_s 1_{D}(s).
\end{equation*}
Then $J_D(X)$ is a c\`{a}dl\`{a}g path of finite variation.
For abbreviation, we often write
$J_\varepsilon(X)$ instead of $J(D_{\varepsilon}(X);X)$.

Throughout this paper,
the term \emph{partition of $\mathbb{R}_{\geq 0}$} always 
means a set of half-open intervals of the form
$\pi = \{ \lrbrack t_i,t_{i+1} \rbrack; i \in \mathbb{N} \}$
that satisfies $0=t_0 < t_1 < \dots \to \infty$. 
The set of all partitions of $\mathbb{R}_{\geq 0}$ is 
denoted by $\Par(\mathbb{R}_{\geq 0})$
or $\Par (\lbrack 0,\infty \rlbrack )$.
Similarly, $\Par([a,b])$ indicates the set of all partitions of the 
form $\pi = \{ \lrbrack t_i,t_{i+1} \rbrack ; 0 \leq i \leq n-1 \}$
with $a=t_0 < t_1 < \dots < t_n = b$.
If $\pi = \{ \lrbrack t_i,t_{i+1} \rbrack \mid i \in I \}$
is a partition of $\mathbb{R}_{\geq 0}$ or a compact interval,
we set $\pi^{\mathrm{p}} = \{ t_i \mid i \in I \}$.
In other words, $\pi^{\mathrm{p}}$ is the set of 
all endpoints of intervals that belongs to $\pi$.
As usual, define the mesh of a partition $\pi$
by 
$
    \lvert \pi \rvert
    = \sup_{\lrbrack r,s \rbrack \in \pi} \lvert r-s \rvert
$.

Let $\mathcal{I}$ be the semiring of subsets of
$\mathbb{R}_{\geq 0}$ consisting of all intervals of the form
$\lrbrack a,b \rbrack$ ($a \leq b$) and $\{ 0 \}$.
The difference of a path $X \colon \mathbb{R}_{\geq 0} \to E$,
denoted by $\delta X$,
is a function from $\mathcal{I}$ into $E$ defined by
$\delta X(\lrbrack r,s \rbrack) = X(s) - X(r)$ for $0 \leq r \leq s$
and by $\delta X(\{ 0 \}) = X_0$. 
In particular, we have $\delta X(\emptyset) = 0$.
For each $t \in \mathbb{R}_{\geq 0}$,
we also define $\delta X_t \colon \mathcal{I} \to E$ by
$\delta X_t(I)= \delta X (I \cap [0,t])$.
If $I = \lrbrack r,s \rbrack \in \mathcal{I}$,
then $\delta X_t(I) = X(s \wedge t) - X(r \wedge t)$.
Next, consider a bilinear map $B \colon F \times E \to G$
between Banach spaces and another path
$Y \colon \mathbb{R}_{\geq 0} \to F$.
Then we define functions $B(Y,\delta X_t)$, $B(\delta Y_t,X)$,
and $B(\delta Y_t,\delta X_t)_t$ from $\mathcal{I}$ to $G$
by the formulae
\begin{gather*}
    B(Y,\delta X_t)(\lrbrack r,s \rbrack) = B(Y_r,\delta X_t(\lrbrack r,s \rbrack)),    \\
    B(\delta Y_t,X)(\lrbrack r,s \rbrack) = B(\delta Y_t(\lrbrack r,s \rbrack),X_r),    \\
    B(\delta Y_t,\delta X_t)(\lrbrack r,s \rbrack) = B(\delta Y_t(\lrbrack r,s \rbrack),\delta X_t (\lrbrack r,s \rbrack))
\end{gather*}
for $I = \lrbrack r,s \rbrack \in \mathcal{I}$ and
\begin{equation*}
    B(Y,\delta X_t)(\{ 0 \})
    = B(\delta Y_t,X)(\{ 0 \})
    = B(\delta Y_t,\delta X_t)(\{ 0 \})
    = B(Y_0,X_0).
\end{equation*}
By this notation, the left-side Riemannian sum
has a relatively shorter expression
\begin{equation*}
    \sum_{\lrbrack r,s \rbrack \in \pi} B(Y_r, X_{t \wedge s} - X_{t \wedge r})
    =
    \sum_{I \in \pi} B(Y,\delta X_t)(I).
\end{equation*}

Let $X$ be a {\cadlag} path in a Banach space $E$.
We say that $X$ has finite variation if
\begin{equation*}
    V(X;[a,b])
    =
    \sup_{\pi \in \Par([a,b])} \sum_{I \in \pi} 
        \lVert \delta X(I) \rVert_E < \infty
\end{equation*}
for all compact intervals $[a,b] \subset \mathbb{R}_{\geq 0}$.
The set of all $E$-valued {\cadlag} paths
is denoted by $FV(\mathbb{R}_{\geq 0},E)$.
For each $t \geq 0$, we define $V(X)_t = V(X;[0,t])$
and call $t \mapsto V(X)_t$ the total variation path of $X$.
If $X$ has finite variation, then $V(X)$
is an increasing {\cadlag} path with $V(X)_0 = 0$.

\begin{Def} \label{2b}
Let $E,F,G$ be Banach spaces
and let $B \in \mathcal{L}^{(2)}(E,F;G)$.
For a path $(X,Y) \in D(\mathbb{R}_{\geq 0},E \times F)$
and a partition $\pi$ of $\mathbb{R}_{\geq 0}$,
define the discrete quadratic covariation as
\begin{equation*}
    Q_B^{\pi}(X,Y)_t
    = \sum_{I \in \pi} B(\delta X_t,\delta Y_t)(I), \qquad t \geq 0.
\end{equation*}
Now let $(\pi_n)$ be a sequence of
partitions of $\mathbb{R}_{\geq 0}$.
We say that $(X,Y)$ has
\emph{strong $B$-quadratic covariation along $(\pi_n)_{n \in \mathbb{N}}$}
if there exists a $G$-valued c\`{a}dl\`{a}g path
$Q_B(X,Y)$ of finite variation
that satisfies the following conditions:
\begin{enumerate}
    \item the sequence $(Q^{\pi_n}_B(X,Y))_{n \in \mathbb{N}}$ converges to $Q(X,Y)$ pointwise in the norm topology;
    \item the jump of $Q_B(X,Y)$ satisfies $\Delta Q_B(X,Y)_t = B(\Delta X_t,\Delta Y_t)$
        for all $t \geq 0$.
\end{enumerate}
Then $Q_B(X,Y)$ is called
the strong $B$-quadratic covariation of $(X,Y)$.
If the convergence in (i) is replaced with weak convergence,
we say that $X$ has the weak $B$-quadratic covariation $Q_B(X,Y)$.
\end{Def}

If $E=F$ and $X=Y$,
we simply call $Q_B(X,X)$ the strong or weak
$B$-quadratic variation of $X$.

Let $\alpha$ be a reasonable crossnorm on $E \otimes F$
and $E \widehat{\otimes}_{\alpha} F$ be the 
completion of normed space $(E \otimes F,\alpha)$.
See Diestel and Uhl~\cite[221--222]{Diestel_Uhl_1977} 
or Ryan~\cite[127]{Ryan_2002} for the definition.
For the strong (resp. weak) quadratic covariation
with respect to the canonical bilinear map
$\otimes \colon E \times F \to E \widehat{\otimes}_{\alpha} F$,
we use the symbol ${^\alpha [X,Y]}$
and call it the strong (resp. weak)
\emph{$\alpha$-tensor quadratic covariation}.
If $E=F$ and $X=Y$, the path ${^\alpha [X,X]}$ is called
the (strong/weak) $\alpha$-tensor quadratic variation.
Recall that there is the greatest crossnorm $\gamma$,
which is usually called the projective norm,
on the tensor product of any two Banach spaces.
For this special crossnorm, we often omit the symbol $\gamma$
and write $E \widehat{\otimes} E = E \widehat{\otimes}_{\gamma} E$
and $[X,Y] = {^\gamma [X,Y]}$.
Then we call $[X,Y] \colon \mathbb{R}_{\geq 0} \to E \widehat{\otimes} E$
the projective tensor quadratic covariation.

Now we introduce a different type of quadratic variation,
namely, scalar quadratic variation.
First, define the discrete scalar
quadratic variation of
$X \colon \mathbb{R}_{\geq 0} \to E$
along a partition $\pi$ by 
\begin{equation*}
    Q^{\pi}(X)_t = \sum_{I \in \pi} \lVert \delta X_t(I) \rVert^2.
\end{equation*}

\begin{Def} \label{2c}
Let $X$ be a c\`{a}dl\`{a}g path in a Banach space $E$
and $(\pi_n)$ be a sequence of partitions of $\mathbb{R}_{\geq 0}$.
\begin{enumerate}
    \item The path $X$ has \emph{finite 2-variation along $(\pi_n)$} if
        \begin{equation*}
            V^{(2)}(X;(\pi_n))_t
            \coloneqq
            \sup_{n \in \mathbb{N}} Q^{\pi_n}(X)_t
            < \infty
        \end{equation*}
        for all $t \in \mathbb{R}_{\geq 0}$.
    \item The path $X \colon \mathbb{R}_{\geq 0} \to E$ has
        \emph{scalar quadratic variation}
        if there exists a real-valued c\`{a}dl\`{a}g increasing path
        $Q(X)$ satisfying the following conditions:
        \begin{enumerate}
            \item the sequence $Q^{\pi_n}(X)$ converges to $Q(X)$ pointwise;
            \item the equality $\Delta Q(X)_t = \lVert \Delta X_t \rVert^2$ 
                holds for all $t \geq 0$.
        \end{enumerate}
        Then we call $Q(X)$ the scalar quadratic variation
        of $X$ along $(\pi_n)$.
\end{enumerate}
\end{Def}

Condition~(i) is much weaker than asserting that $X$ has finite 2-variation
in the usual sense, that is,
\begin{equation*}
    \sup_{\pi \in \Par([a,b])} \sum_{I \in \pi} 
        \lVert \delta X(I) \rVert^2 < \infty
\end{equation*}
for all compact intervals $[a,b] \subset \mathbb{R}_{\geq 0}$.
If $X$ has scalar quadratic variation along $(\pi_n)$,
then it has finite 2-variation along the same sequence $(\pi_n)$.
Note that if $E$ is a Hilbert space,
the scalar quadratic variation coincides with the quadratic variation
$Q_{\langle\phantom{x},\phantom{x}\rangle}(X,X)$,
where $\langle\phantom{x},\phantom{x}\rangle \colon E \times E \to \mathbb{R}$
is the inner product of $E$.

A typical example of tensor and scalar quadratic variations is
the tensor and scalar quadratic variations of a semimartingale in 
a Hilbert space.
See, for example, Metivier and Pellaumail~\cite{Metivier_Pellaumail_1980b}
and Metivier~\cite{Metivier_1982}.
A Banach space-valued path with finite variation has
tensor and scalar quadratic variations along any sequence of partitions
satisfying $\lvert \pi_n \rvert \to 0$.
It is also true for a partition satisfying a slightly general condition
in Definition~\ref{2d} below.
See Hirai~\cite[Section 5]{Hirai_2022a} for a proof.
Furthermore, for $\alpha > 1/2$,
an $\alpha$-H\"{o}lder continuous path in a Banach space 
has tensor and scalar quadratic variations, which are identically zero,
along a sequence with the condition $\lvert \pi_n \rvert \to 0$.

Next, we introduce conditions on a sequence of partitions
that is required for some important theorems, including the It\^{o} formula.  
Let $\pi \in \Par(\mathbb{R}_{\geq 0})$ and $t \in \lrbrack 0,\infty \mathclose{[}$.
The symbol $\pi(t)$ denotes the element of $\pi$ that contains $t$.
By definition, there exists only one such interval.
Moreover, we define $\overline{\pi}(t) = \sup \pi(t)$ and $\underline{\pi}(t) = \inf \pi(t)$.
Then we have $\pi(t) = \lrbrack \underline{\pi}(t),\overline{\pi}(t) \rbrack$, and 
\begin{gather*}
    \delta X_t(\pi(s)) = X(\overline{\pi}(s) \wedge t) - X(\underline{\pi}(s) \wedge t), \qquad
    \delta X(\pi(s)) = X(\overline{\pi}(s)) - X(\underline{\pi}(s))
\end{gather*}
hold for all $s$ and $t$ in $\lrbrack 0,\infty \rlbrack$.

Let $f \colon S \to E$ be a function
into a Banach space and set
\begin{equation*}
    \omega(f,A) = \sup_{x,y \in A} \lVert f(x) - f(y) \rVert_E
\end{equation*}
for each subset $A$ of $S$.
Using this notation, we define oscillations of
$X \in D(\mathbb{R}_{\geq 0},E)$  along $\pi \in \Par(\mathbb{R}_{\geq 0})$ by
\begin{gather*}
    O^+_{t}(X,\pi)
    = \sup_{\lrbrack r,s \rbrack \in \pi}
        \omega(X, \lrbrack r,s \rbrack \cap [0,t]),   \\
    O^-_t(X,\pi)
    = \sup_{\lrbrack r,s \rbrack \in \pi}
        \omega(X, \lbrack r,s \rlbrack \cap [0,t]).
\end{gather*}
By the right continuity,
we have $O^-_t(X,\pi) \leq O^+_t(X,\pi)$.
Two oscillations $O^-$ and $O^+$ coincide if $X$ is continuous.

In the It\^{o}--F\"{o}llmer calculus,
one often requires the sequence $(\pi_n)$
to satisfy either $\lim_{n \to \infty} \lvert \pi_n \rvert = 0$
or $O^-_t(X,\pi_n) \to 0$ for all $t \geq 0$.
When considering {\cadlag} paths, neither 
includes the other.
In this paper,
we work with another assumption
given in Definition~\ref{2d},
which unifies these two approaches.

\begin{Def}[Hirai~\cite{Hirai_2022a}] \label{2d}
Let $X$ be a {\cadlag} path in a Banach space $E$
and let $(\pi_n)_{n \in \mathbb{N}}$
be a sequence of partitions of $\mathbb{R}_{\geq 0}$.
\begin{enumerate}
\item The sequence $(\pi_n)$ satisfies Condition~(C) for $X$
    if it satisfies the following three conditions:
	\begin{enumerate}
        \item[(C1)] Let $t \in \mathbb{R}_{\geq 0}$ and $\varepsilon > 0$. 
            Then there exists an $N \in \mathbb{N}$ such that
            $I \cap [0,t] \cap D_{\varepsilon}(X)$
            has at most one element
            for all $n \geq N$ and all $I \in \pi_n$.
        \item[(C2)] The sequence $(\delta X_t(\pi_n(s)))_{n \in \mathbb{N}}$
            converges to $\Delta X_s$ for all $s \in D(X)$ and $t \in [s,\infty \rlbrack$.
        \item[(C3)] For all $t \in \mathbb{R}_{\geq 0}$,
            \begin{equation*}
            \varlimsup_{\varepsilon \downarrow\downarrow 0} \varlimsup_{n \to \infty} O^{+}_t(X-J_{\varepsilon}(X);\pi_n) = 0.
            \end{equation*}
    \end{enumerate}
    We say that $(\pi_n)$ satisfies (C) for $X$ on $[0,T]$
    if $(\pi_n)$ satisfies (C) for the stopped path $X(\cdot \wedge T)$.
\item The sequence \emph{$(\pi_n)$ approximates $X \colon \mathbb{R}_{\geq 0} \to E$ from the left}
    if $\lim_{n \to \infty} X(\underline{\pi_n}(t)) = X(t-)$ holds for all $t > 0$.
    Then we call $(\pi_n)$ a \emph{left approximation sequence}
    for $X$.
\end{enumerate}
\end{Def}

In the framework of the It\^{o}--F\"{o}llmer calculus in Banach spaces described above,
we have a $C^{1,2}$-type It\^o formula.
We first quote a generalized form of the theorem (Theorem~\ref{2e}),
which treats general bilinear quadratic variations.
In what follows,
let $E$, $E_1$, $F$, and $G$ be Banach spaces
and $B \colon E \times E \to E_1$ be a bounded bilinear map.
The symbol $C_{\mathcal{K}}(F \times E,G')$
denotes the set of all functions $f \colon F \times E \to G'$
into a Banach space $G'$ such that $f\vert_{K}$ is continuous
whenever $K \subset F \times E$ is compact.

As we can see in Corollary~\ref{2fc},
a typical example of $B$ is the canonical bilinear map
$\otimes \colon E \times E \to E \widehat{\otimes}_{\alpha} E$
for some reasonable crossnorm $\alpha$.
A more elementary case is given by $f = F \circ P$
with $P \colon E \to V$ being a finite-dimensional projection
and $F$ is a $C^2$ function in the usual sense.

\begin{Th}[Hirai~\cite{Hirai_2022a}] \label{2e}
Let $(A,X) \in D(\mathbb{R}_{\geq 0},F \times E)$
and $(\pi_n)$ be a sequence of partitions
that satisfies Condition~(C) for $(A,X)$ and approximate it from the left.
Suppose that $X \in D(\mathbb{R}_{\geq 0},E)$
has weak $B$-quadratic variation and finite 2-variation along $(\pi_n)$
and that $A \in D(\mathbb{R}_{\geq 0},F)$ has finite variation.
Consider a function $f \colon F \times E \to G$ that is
twice G\^{a}teaux differentiable in the second variable $x$
and once in the first variable $a$.
In addition, suppose that $D_a f \in C_{\mathcal{K}}(F \times E,\mathcal{L}(F,G))$,
$D_x f \in C_{\mathcal{K}}(F \times E,\mathcal{L}(E,G))$,
and there exists a $D_B^2 f \in C_{\mathcal{K}}(F \times E,\mathcal{L}(E_1,G))$
such that $D^2_x f(a,x) = D^2_Bf(a,x) \circ B$ for all $(a,x) \in F \times E$.
Then the path $t \mapsto f(A_t,X_t)$ satisfies
\begin{multline} \label{2f}
    f(A_t,X_t) - f(A_0,X_0)
    = 
    \int_{0}^{t} D_a f (A_{s-},X_{s-}) \mathrm{d}A^{\mathrm{c}}_s
    + \int_0^t D_x f(A_{s-},X_{s-})\mathrm{d}X_s   \\
    + \frac{1}{2} \int_{0}^{t} D_B^2 f(A_{s-},X_{s-})\mathrm{d}Q_B(X,X)^{\mathrm{c}}_s
    + \sum_{0 < s \leq t} \left\{ \Delta f(A_s,X_s) - D_x f(A_{s-},X_{s-})\Delta X_s \right\}
\end{multline}
for all $t \geq 0$,
where the first and the third integrals on the right-hand side
are usual vector Stieltjes integrals on $\lrbrack 0,t \rbrack$
and the second one is the It\^{o}--F\"{o}llmer integral
defined as the weak limit
\begin{equation} \label{2fb}
    \int_0^t D_x f(A_{s-},X_{s-})\mathrm{d}X_s
    =
    \lim_{n \to \infty} \sum_{I \in \pi_n}
        D_x f(A,X) \delta X_t(I).
\end{equation}
Moreover, if the quadratic variation $Q_B$ exits in the strong sense,
the convergence of \eqref{2fb} holds in the norm topology of $G$.
\end{Th}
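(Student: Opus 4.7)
The plan is to perform a telescoping decomposition along $\pi_n$, Taylor-expand each increment, and pass to the limit using Condition~(C) together with the defining convergences of $Q_B(X,X)$, $V(A)$, and the It\^o--F\"ollmer integral. The structure is the usual one: separate ``large'' jumps from the rest by writing $X = (X - J_{\varepsilon}(X)) + J_{\varepsilon}(X)$, take $n \to \infty$ first and then $\varepsilon \downarrow\downarrow 0$.

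First I would fix $t \geq 0$ and $\varepsilon > 0$. By (C1), for $n$ large enough every $I = \lrbrack u,v\rbrack \in \pi_n$ meets $D_{\varepsilon}(X) \cap [0,t]$ in at most one point. Telescoping gives
\begin{equation*}
    f(A_t,X_t) - f(A_0,X_0)
    = \sum_{I \in \pi_n} \bigl\{ f(A_{v \wedge t},X_{v \wedge t}) - f(A_{u \wedge t},X_{u \wedge t}) \bigr\}.
\end{equation*}
For intervals $I$ that do \emph{not} contain a large jump I apply joint Taylor expansion: first-order in $a$, second-order in $x$, using $D_x^2 f = D_B^2 f \circ B$ to rewrite the quadratic term. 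This produces four contributions per interval---an $A$-linear term, an $X$-linear term, a $B$-bilinear term, and a remainder $R_I^n$. For intervals that \emph{do} contain a large jump $s \in D_{\varepsilon}(X)$, I keep the raw difference; by (C2) and right continuity it tends to $f(A_s,X_s) - f(A_{s-},X_{s-})$.

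Next I pass to the limit term by term. The $A$-linear sum converges to the vector Stieltjes integral $\int_{\lrbrack 0,t \rbrack} D_a f(A_{s-},X_{s-}) \mathrm{d}A_s$ by standard simple-function approximation, once continuity of $D_a f$ on compacts (guaranteed by $D_a f \in C_{\mathcal{K}}$) is combined with $V(A)_t < \infty$; decomposing $\mathrm{d}A = \mathrm{d}A^{\mathrm{c}} + \mathrm{d}J(A)$ peels off $\sum_{s} D_a f(A_{s-},X_{s-}) \Delta A_s$. The $X$-linear sum converges in the weak (or strong) topology to $\int_0^t D_x f(A_{s-},X_{s-}) \mathrm{d}X_s$ by the very definition~\eqref{2fb}; this is essentially the content built into the hypothesis that $X$ has the appropriate $B$-quadratic variation along $(\pi_n)$. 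For the $B$-bilinear sum, note it equals $\sum_I D_B^2 f(A_u,X_u) \cdot B(\delta X_t(I),\delta X_t(I))$, which can be viewed as a Stieltjes-type sum against $Q_B^{\pi_n}(X,Y)$; using left approximation, continuity of $D_B^2 f$, and weak (resp.\ strong) convergence $Q_B^{\pi_n}(X,X) \to Q_B(X,X)$ together with the jump identity $\Delta Q_B(X,X)_t = B(\Delta X_t,\Delta X_t)$, this sum tends to $\int_{\lrbrack 0,t\rbrack} D_B^2 f(A_{s-},X_{s-}) \mathrm{d}Q_B(X,X)^{\mathrm{c}}_s$ plus the corresponding jump sum.

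The genuinely delicate step---and the main obstacle---is controlling the Taylor remainder $\sum_I R_I^n$ uniformly and reconciling the three distinct jump contributions (from the $A$-integral, from the $Q_B$-integral, and from the isolated large-jump intervals) into the single clean sum $\sum_{0 < s \leq t} \{ \Delta f(A_s,X_s) - D_x f(A_{s-},X_{s-}) \Delta X_s \}$. For the remainder, a standard second-order Taylor inequality gives, on a compact set $K$ containing the closure of $\{(A_s,X_s): s \leq t\}$ modulo the $\varepsilon$-jumps,
\begin{equation*}
    \lVert R_I^n \rVert \leq \omega(D_x^2 f,K_\eta)\, \lVert \delta X_t(I) \rVert^2 + \omega(D_a f,K_\eta)\, \lVert \delta A_t(I) \rVert,
\end{equation*}
where $K_\eta$ shrinks with $O^+_t(X - J_{\varepsilon}(X),\pi_n)$. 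Summing and using $V^{(2)}(X;(\pi_n))_t < \infty$, $V(A)_t < \infty$, and (C3) forces the remainder to $0$ upon $n \to \infty$ then $\varepsilon \downarrow 0$. A symmetric argument, based on absolute summability $\sum_s \lVert \Delta X_s \rVert^2 \leq V^{(2)}(X;(\pi_n))_t$ and $\sum_s \lvert \Delta A_s \rvert < \infty$, together with continuity of $D_x f$ and $D_B^2 f$, shows that the small-jump residuals vanish as $\varepsilon \downarrow 0$; telescoping the three jump sums then produces exactly the stated purely discontinuous term. Finally, the convergence mode (weak vs.\ norm) is propagated through the argument by the assumption on $Q_B(X,X)$, yielding the last assertion.
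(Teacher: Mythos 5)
Your overall architecture---telescoping along $\pi_n$, isolating the intervals that meet $D_{\varepsilon}(X)$, Taylor expansion with $D_x^2 f = D_B^2 f \circ B$ on the remaining intervals, then $n \to \infty$ followed by $\varepsilon \downarrow\downarrow 0$---is the standard F\"ollmer scheme. The paper does not reprove Theorem~\ref{2e} here (it defers to the first paper of the series), but the strategy it signals, via Lemma~\ref{2g} for the quadratic term and via the closely analogous proof of Theorem~\ref{5.2b}, is exactly this one, so your plan is aligned with the intended argument.

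There is, however, one genuine logical gap: your treatment of the $X$-linear term. You assert that $\sum_{I \in \pi_n} D_x f(A,X)\,\delta X_t(I)$ converges ``by the very definition~\eqref{2fb}'' and that this is ``built into the hypothesis'' that $X$ has $B$-quadratic variation. That is circular. Equation~\eqref{2fb} is not a hypothesis; it is the \emph{definition of notation} for a limit whose existence is part of the conclusion of the theorem. The existence of the $B$-quadratic variation gives you convergence of the sums $\sum_I B(\delta X_t,\delta X_t)(I)$, not of left Riemann sums of $D_x f$ against $\mathrm{d}X$. The correct logic runs in the opposite direction: the left-hand side of the telescoped identity is independent of $n$, and once you have shown that the $A$-term, the $B$-bilinear term (via Lemma~\ref{2g}), the large-jump contributions, and the Taylor remainders all converge (in the weak, resp.\ norm, topology), the $X$-linear sum is forced to converge as the residual, and \emph{that} limit is then christened the It\^o--F\"ollmer integral. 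As written, your proof assumes the hardest part of the conclusion. A secondary, repairable point: your remainder bound invokes moduli of continuity on a set $K_\eta$ that you describe as a compact neighbourhood of the range of $(A,X)$ ``modulo the $\varepsilon$-jumps''; since the Taylor expansion is taken along segments $X_{u\wedge t} + \theta\,\delta X_t(I)$ that need not lie in the range, you should work on the closed convex hull of the compact range (compact by Mazur's theorem), as is done explicitly in the proof of Theorem~\ref{5.2b}.
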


See the first paper in this series~\cite{Hirai_2022a} for a proof.
Note that in \cite{Hirai_2022a}, we assumed the continuity of
derivatives of $f$ on the whole space $F \times E$.
We can indeed weaken the continuity of derivatives as in Theorem~\ref{2e}
without modifying the proof.

Applying Theorem~\ref{2e} to $E_1 = E \widehat{\otimes}_{\alpha} E$ and 
$B = \otimes \colon E \times E \to E \widehat{\otimes}_{\alpha} E$
for a reasonable crossnorm $\alpha$,
we obtain the following corollary.

\begin{Cor} \label{2fc}
Let $X \in D(\mathbb{R}_{\geq 0},E)$ be a path with
weak $\alpha$-tensor quadratic variation
and finite 2-variation along $(\pi_n)$,
and let $A \in FV(\mathbb{R}_{\geq 0},F)$.
Assume that a sequence of partitions $(\pi_n)$ satisfies (C)
and is a left-approximation sequence for $(A,X)$.
Let $f \colon F \times E \to G$ be a function
that is once G\^{a}teaux differentiable in $a$ and twice
G\^{a}teaux differentiable in $x$.
Suppose moreover that 
$D_a f \in C_{\mathcal{K}}(F \times E,\mathcal{L}(F,G))$,
$D_x f \in C_{\mathcal{K}}(F \times E,\mathcal{L}(E,G))$,
and $D_x^2 f \colon F \times E \to \mathcal{L}(E \widehat{\otimes} E,G)$
extends to a function in
$C_{\mathcal{K}}(F \times E,\mathcal{L}(E \widehat{\otimes}_{\alpha} E,G))$.
Then $f(A,X)$ satisfies the following It\^o formula:
\begin{multline*}
    f(A_t,X_t) - f(A_0,X_0)
    = 
    \int_{0}^{t} D_a f (A_{s-},X_{s-}) \mathrm{d}A^{\mathrm{c}}_s
    + \int_0^t D_x f(A_{s-},X_{s-})\mathrm{d}X_s   \\
    + \frac{1}{2} \int_{0}^{t} D_x^2 f(A_{s-},X_{s-})\mathrm{d}{^\alpha [X,X]}^{\mathrm{c}}_s
    + \sum_{0 < s \leq t} \left\{ \Delta f(A_s,X_s) - D_x f(A_{s-},X_{s-})\Delta X_s \right\},
\end{multline*}
where the second integral on the right-hand side is
the It\^{o}--F\"{o}llmer defined as the weak limit.
Moreover, if ${^\alpha [X,X]}$ is the strong $\alpha$-tensor quadratic variation,
then the It\^{o}--F\"{o}llmer integral converges in the norm topology.
\end{Cor}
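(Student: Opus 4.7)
The plan is to deduce this corollary as a direct specialization of Theorem~\ref{2e} applied with $E_1 = E \widehat{\otimes}_{\alpha} E$ and $B = \otimes \colon E \times E \to E \widehat{\otimes}_{\alpha} E$ the canonical bilinear map. Under this choice, the weak (resp. strong) $B$-quadratic variation $Q_B(X,X)$ agrees with ${^\alpha[X,X]}$ by the very definition of the $\alpha$-tensor quadratic variation, so the conclusion \eqref{2f} will translate verbatim into the statement of the corollary once the factorization hypothesis on the second derivative has been arranged.

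The one substantive step is to exhibit the map $D_B^2 f$ required by Theorem~\ref{2e}. By the universal property of the projective tensor product, every bounded bilinear map $E \times E \to G$ corresponds canonically and isometrically to an element of $\mathcal{L}(E \widehat{\otimes} E, G)$; in particular the bilinear form $D_x^2 f(a,x)$ is naturally identified with an operator in $\mathcal{L}(E \widehat{\otimes} E, G)$. The hypothesis of the corollary is precisely that this operator extends, for each $(a,x)$, to an element of $\mathcal{L}(E \widehat{\otimes}_{\alpha} E, G)$, with the resulting $(a,x)$-dependence belonging to $C_{\mathcal{K}}(F \times E, \mathcal{L}(E \widehat{\otimes}_{\alpha} E, G))$. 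I would take $D_B^2 f$ to be this extension. Then $D_B^2 f(a,x) \circ \otimes$ agrees with $D_x^2 f(a,x)$ as a bilinear form on $E \times E$, so the factorization hypothesis of Theorem~\ref{2e} is satisfied, and $D_B^2 f \in C_{\mathcal{K}}(F \times E, \mathcal{L}(E_1, G))$ since $E_1 = E \widehat{\otimes}_{\alpha} E$.

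With these identifications in place, the remaining hypotheses on $(A,X)$, on $(\pi_n)$, and on the first-order Gâteaux derivatives of $f$ transfer verbatim from the corollary to Theorem~\ref{2e}. An application of the theorem then yields the It\^o formula in the stated form, with the mode of convergence of the It\^o--F\"ollmer integral (weak versus norm) inherited directly from the mode of convergence of ${^\alpha[X,X]}$. I do not anticipate any deep obstacle: the argument is essentially bookkeeping around the universal property of the projective tensor product. The only point requiring a line of care is to remember that a reasonable crossnorm $\alpha$ is dominated by the projective crossnorm $\gamma$, so there is a canonical norm-decreasing map $E \widehat{\otimes} E \to E \widehat{\otimes}_{\alpha} E$; this is what makes the phrase ``$D_x^2 f$ extends to a function in $C_{\mathcal{K}}(F \times E, \mathcal{L}(E \widehat{\otimes}_{\alpha} E, G))$'' meaningful and guarantees the compatibility of the two continuity conditions.
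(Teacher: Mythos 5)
Your proposal is correct and follows exactly the paper's own route: the corollary is obtained by applying Theorem~\ref{2e} with $E_1 = E \widehat{\otimes}_{\alpha} E$ and $B = \otimes$, taking $D_B^2 f$ to be the hypothesized extension of $D_x^2 f$ so that the factorization $D_x^2 f(a,x) = D_B^2 f(a,x) \circ \otimes$ holds. Your closing remark about the norm-decreasing map $E \widehat{\otimes} E \to E \widehat{\otimes}_{\alpha} E$ matches the paper's own comment following the corollary on why the extension hypothesis is needed.
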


If $\alpha$ is the projective norm,
then the assumption on the second derivative
can be simply written as
$D^2_x f \in C_{\mathcal{K}}(F \times E,\mathcal{L}(E \widehat{\otimes} E,G))$.
This is because $D_x^2 f(a,x)$ is generally an element of $\mathcal{L}(E \widehat{\otimes} E,G)$
but not of $\mathcal{L}(E \widehat{\otimes}_{\alpha} E,G)$.

The following lemma, which is essential for the proof of Theorem~\ref{2e},
will also be used in Section~5 to prove one of the main theorems of this paper.

\begin{Lem} \label{2g}
Let $(\pi_n)$ be a sequence in partitions and
let $X \in D(\mathbb{R}_{\geq 0},E)$ be a path with
strong or weak $B$-quadratic variation and finite 2-variation along $(\pi_n)$.
Assume that $(\pi_n)$ satisfies Condition~(C) for $X$ and 
approximates a $\xi \in D(\mathbb{R}_{\geq 0},\mathcal{L}(E_1,G))$ from the left.
Then for all $t \in \mathbb{R}_{\geq 0}$,
\begin{equation*}
    \lim_{n \to \infty}
        \sum_{I \in \pi_n} \xi \delta X_t(I)
    =
    \int_{\lrbrack 0,t \rbrack} \xi_{s-}\mathrm{d}Q_B(X,X)_s,
\end{equation*}
holds in the respective topology.
\end{Lem}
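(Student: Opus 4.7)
The plan is to reduce the statement to a Riemann--Stieltjes-type convergence by separating the large jumps of $X$, in the spirit of the proof of Theorem~\ref{2e} in~\cite{Hirai_2022a}. Fix $t \geq 0$ and $\varepsilon > 0$, and write $X = X^\varepsilon + J_\varepsilon(X)$, where $J_\varepsilon(X)$ collects the finitely many jumps of $X$ on $[0,t]$ of size greater than $\varepsilon$. By Condition~(C1), for all sufficiently large $n$ each $I \in \pi_n$ contains at most one element of $D_\varepsilon(X) \cap [0,t]$, so I split $\pi_n = \pi_n^{(1)} \sqcup \pi_n^{(2)}$ into intervals meeting, respectively missing, $D_\varepsilon(X) \cap [0,t]$, yielding the decomposition $S_n(t) = S_n^{(1)}(t) + S_n^{(2)}(t)$ of the discrete sum on the left-hand side.

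The finite sum $S_n^{(1)}(t)$ has one term per $u \in D_\varepsilon(X) \cap \lrbrack 0,t \rbrack$, indexed by the unique interval $\pi_n(u) \in \pi_n^{(1)}$. Condition~(C2) gives $\delta X_t(\pi_n(u)) \to \Delta X_u$ and the left-approximation of $\xi$ gives $\xi(\underline{\pi_n}(u)) \to \xi_{u-}$; combined with bilinearity and continuity of $B$, this yields in the norm topology
\begin{equation*}
    \lim_{n \to \infty} S_n^{(1)}(t)
    = \sum_{u \in D_\varepsilon(X) \cap \lrbrack 0,t \rbrack} \xi_{u-} B(\Delta X_u, \Delta X_u),
\end{equation*}
which is precisely the $\varepsilon$-large-jump contribution to $\int_{\lrbrack 0,t \rbrack} \xi_{s-} \mathrm{d} Q_B(X,X)_s$.

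The real work is the analysis of $S_n^{(2)}(t)$. Since $J_\varepsilon(X)$ has no jumps on any $I \in \pi_n^{(2)}$, we have $\delta X_t(I) = \delta X^\varepsilon_t(I)$, hence $\lVert \delta X_t(I) \rVert \leq O^+_t(X^\varepsilon,\pi_n)$. Introduce the residual path $R^{(\varepsilon)} := Q_B(X,X) - \sum_{u \in D_\varepsilon(X)} B(\Delta X_u, \Delta X_u) 1_{[u,\infty)}$, a {\cadlag} path of finite variation which is continuous at each $u \in D_\varepsilon(X)$. I would show $S_n^{(2)}(t) \to \int_{\lrbrack 0,t \rbrack} \xi_{s-} \mathrm{d} R^{(\varepsilon)}_s$ via a dominated-convergence argument: the pointwise convergence $\xi(\underline{\pi_n}(s)) \to \xi_{s-}$ combines with the pointwise convergence of the partial sums $\sum_{I \in \pi_n^{(2)}, \overline{I} \leq s} B(\delta X_t(I), \delta X_t(I)) \to R^{(\varepsilon)}_{s \wedge t}$ (obtained by applying the $S_n^{(1)}$-analysis above to the subinterval $[0,s]$ and using $Q_B^{\pi_n}(X,X)_s \to Q_B(X,X)_s$), under the uniform dominator $\sum_I \lVert B(\delta X_t(I), \delta X_t(I)) \rVert \leq \lVert B \rVert V^{(2)}(X;(\pi_n))_t$ from finite 2-variation. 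Letting $\varepsilon \to 0$ and invoking Condition~(C3) disposes of the remaining small-jump residual, since $\lVert R^{(\varepsilon)}_t - Q_B(X,X)_t \rVert$ controls the omitted integral.

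The main obstacle is the weak-convergence case of this last step, where uniform approximation of $\xi$ by simple functions on $[0,t]$ is not directly available. I would address it by testing against an arbitrary $\phi \in G^*$, reducing to a scalar Riemann--Stieltjes convergence for $\phi \circ \xi$ (an $E_1^*$-valued {\cadlag} path that is still left-approximated by $(\pi_n)$) against the real-valued {\cadlag} finite-variation path $\phi \circ Q_B(X,X)$, with the uniform bound $\sum_I \lvert \phi(B(\delta X_t(I), \delta X_t(I))) \rvert \leq \lVert \phi \rVert \lVert B \rVert V^{(2)}(X;(\pi_n))_t$ providing the dominator.
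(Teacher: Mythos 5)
First, a remark on the comparison itself: this paper does not prove Lemma~\ref{2g} --- it is quoted from the prequel \cite{Hirai_2022a} --- so your proposal can only be measured against the argument that proof must carry out, whose key technical ingredient resurfaces in this paper as Lemmas~\ref{4eb} and~\ref{4ec}. Your opening moves are correct and standard: the split $\pi_n=\pi_n^{(1)}\sqcup\pi_n^{(2)}$ via (C1), the identification of $\lim_n S_n^{(1)}(t)$ with $\sum_{u\in D_\varepsilon(X)\cap\lrbrack 0,t\rbrack}\xi_{u-}B(\Delta X_u,\Delta X_u)$ using (C2) and left-approximation of $\xi$, and the observation that intervals in $\pi_n^{(2)}$ have increments bounded by $O^+_t(X-J_\varepsilon(X),\pi_n)$.

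The gap is the treatment of $S_n^{(2)}$. The principle you invoke --- pointwise convergence of the sampled integrand $\xi(\underline{\pi_n}(s))\to\xi_{s-}$, pointwise convergence of the partial-sum ``distribution functions'' to $R^{(\varepsilon)}$, and a uniform total-variation bound together imply convergence of the integrals --- is false in general. Take $F_n=1_{[a_n,1]}$ with $a_n\uparrow\uparrow 1/2$ and $g=1_{\{a_n\mid n\in\mathbb{N}\}}$: then $F_n\to 1_{[1/2,1]}$ pointwise with total variation $1$, yet $\int g\,\mathrm{d}F_n=1\not\to 0$. The failure mode is an atom of the discrete measure colliding with a discontinuity of the integrand, and nothing in your sketch excludes it. What actually makes the step work is that $\xi$ is regulated on $[0,t]$, hence uniformly approximable by step functions $h=\sum_k\xi(v_k)1_{[v_k,v_{k+1})}$; the errors $\sum_I[\xi(\underline{I})-h(\underline{I})]B(\delta X_t(I)^{\otimes 2})$ and $\int(\xi_{s-}-h(s-))\,\mathrm{d}Q_B(X,X)$ are controlled by $\sup_{s\le t}\lVert\xi_s-h_s\rVert$ times $\lVert B\rVert V^{(2)}(X;(\pi_n))_t$ and $V(Q_B(X,X))_t$ respectively, and the problem reduces to convergence of $\sum_{I:\,\underline{I}\in[v_k,v_{k+1})}B(\delta X_t(I)^{\otimes 2})$ to the corresponding increment of $Q_B(X,X)$. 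That last convergence is itself nontrivial: the partition interval straddling $v_k$ produces a cross term of the form $B(X_{\overline{\pi_n}(v_k)\wedge t}-X_{v_k},\,X_{v_k}-X_{\underline{\pi_n}(v_k)})$ plus its transpose, and showing these vanish is precisely the content of Lemma~\ref{4eb}, which consumes all of (C1)--(C3). Your sketch mentions neither the step-function approximation nor these boundary cross terms, and they are the actual substance of the proof. The reduction of the weak case to a test functional $\phi\in G^\ast$ is the right move, but note that it produces an $E_1^\ast$-valued integrand paired with the $E_1$-valued measure $\mathrm{d}Q_B(X,X)$, not a scalar Stieltjes integral against $\phi\circ Q_B(X,X)$; this is harmless only after the integrand has been made piecewise constant, which again requires the step-function reduction.
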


\section{Linear transformations}

In this section, we give some simple results 
related to linear transformations of paths and quadratic variations.

\begin{Prop} \label{3b}
Let $E$, $F$, and $G$ be Banach spaces and let $B \in \mathcal{L}^{(2)}(E,F;G)$.
Suppose that $(X,Y) \in D(\mathbb{R}_{\geq 0},E \times F)$ has
strong (resp. weak) $B$-quadratic covariation
along a sequence of partitions $(\pi_n)$.
Then for every Banach space $G'$ and $T \in \mathcal{L}(G,G')$,
the pair $(X,Y)$ has
strong (resp. weak) $T \circ B$-quadratic covariation along $(\pi_n)$,
given by
$Q_{T \circ B}(X,Y) = T \circ Q_B(X,Y)$.
\end{Prop}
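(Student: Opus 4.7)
The plan is to push everything through the linear map $T$ by exploiting its linearity at the discrete level and its continuity at the limit, with separate topological arguments for the two convergence modes.

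First I would unwind the definition of the discrete $(T \circ B)$-covariation at a fixed $t \geq 0$ along $\pi_n$. By bilinearity of $B$ and linearity of $T$, the map $T \circ B$ is itself bilinear, and
\begin{equation*}
    Q^{\pi_n}_{T \circ B}(X,Y)_t
    = \sum_{I \in \pi_n} (T \circ B)(\delta X_t,\delta Y_t)(I)
    = T\Biggl( \sum_{I \in \pi_n} B(\delta X_t,\delta Y_t)(I) \Biggr)
    = T\bigl( Q^{\pi_n}_B(X,Y)_t \bigr),
\end{equation*}
where the factorization is legitimate because only finitely many summands are nonzero for each fixed $t$.

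Next I would pass to the limit in $n$. In the strong case, $T$ is norm-to-norm continuous, so $T(Q^{\pi_n}_B(X,Y)_t) \to T(Q_B(X,Y)_t)$ in the norm of $G'$. In the weak case, every bounded linear operator is weak-to-weak continuous (this is immediate from $g' \circ T \in G^{*}$ for every $g' \in {G'}^{*}$), so the weak convergence of $Q^{\pi_n}_B(X,Y)_t$ to $Q_B(X,Y)_t$ transports to weak convergence of the images under $T$. Either way, $Q^{\pi_n}_{T \circ B}(X,Y)_t$ converges to $T(Q_B(X,Y)_t)$ pointwise in the appropriate topology, which identifies the candidate limit path as $T \circ Q_B(X,Y)$.

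Finally I would verify the structural properties required by Definition~\ref{2b} for this candidate. Continuity of $T$ ensures $T \circ Q_B(X,Y)$ is {\cadlag}, and the bound $V(T \circ Q_B(X,Y))_t \leq \lVert T \rVert \, V(Q_B(X,Y))_t$ gives finite variation. The jump condition reduces to
\begin{equation*}
    \Delta \bigl( T \circ Q_B(X,Y) \bigr)_t
    = T\bigl( \Delta Q_B(X,Y)_t \bigr)
    = T\bigl( B(\Delta X_t,\Delta Y_t) \bigr)
    = (T \circ B)(\Delta X_t,\Delta Y_t),
\end{equation*}
again by continuity of $T$ and the jump hypothesis on $Q_B(X,Y)$. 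There is no real obstacle here: the argument is just a transparent application of linearity plus the two standard continuity statements for bounded operators. The only point to keep an eye on is to phrase the weak-topology step correctly, since \emph{weak} convergence under $T$ requires the weak-to-weak continuity rather than norm continuity; everything else is bookkeeping.
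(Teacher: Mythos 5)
Your proposal is correct and follows essentially the same route as the paper's own proof: pull $T$ out of the finite Riemann sums by linearity, pass to the limit using norm (resp.\ weak-to-weak) continuity of $T$, and check the finite-variation bound and the jump identity for $T \circ Q_B(X,Y)$. Your explicit remark that weak convergence is preserved because $g' \circ T \in G^{*}$ for every $g' \in {G'}^{*}$ is a point the paper leaves implicit, but the argument is the same.
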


\begin{proof}
First, note that
$V(T \circ Q_B(X,Y);I) \leq \lVert T \rVert V(Q_B(X,Y);I)$
holds for all of compact interval $I$,
and therefore $T \circ Q_B(X,Y)$ is a {\cadlag}
path of finite variation.
By the linearity and the continuity of $T$, we see that
\begin{equation*}
    \sum_{I \in \pi_n} T (B(\delta X_t,\delta Y_t))(I)
    =
    T \left( \sum_{I \in \pi_n} B(\delta X_t,\delta Y_t)(I) \right)
    \xrightarrow[n \to \infty]{} T(Q_B(X,Y)_t)
\end{equation*}
holds for all $t \geq 0$ in the suitable topology.
Moreover, we have
\begin{equation*}
    \Delta T(Q_B(X,Y)_t)
    =
    T(\Delta Q_B(X,Y)_t) 
    =
    T(B(\Delta X_t,\Delta Y_t)). 
\end{equation*}
Hence, $T \circ Q_B(X,Y)$ is the $T \circ B$-
quadratic covariation of $(X,Y)$.
\end{proof}

As a consequence of Proposition~\ref{3b},
we can reveal the relation between projective tensor
quadratic variation and other quadratic variations
with respect to bilinear maps.
Now recall that there is an isometric isomorphism
$\mathcal{L}^{(2)}(E,F;G) \cong \mathcal{L}(E \widehat{\otimes} F,G)$.

\begin{Prop} \label{3c}
Let $(\pi_n)$ be a sequence of partitions 
and let $(X,Y) \in D(\mathbb{R}_{\geq 0},E \times F)$.
Then the following conditions are equivalent:
\begin{enumerate}
    \item $(X,Y)$ has strong (resp. weak) projective tensor quadratic covariation.
    \item $(X,Y)$ has strong (resp. weak) $B$-quadratic covariation
        for every $B \in \mathcal{L}^{(2)}(E,F;G)$.
\end{enumerate}
If these conditions are satisfied, then 
$T_B \circ [X,Y] = Q_B(X,Y)$ holds for every $B \in \mathcal{L}(E,F;G)$,
where $T_B$ is the unique bounded linear map
that commutes the following diagram.
\begin{equation*}
    \begin{tikzcd}
        E \times F \arrow[r,"B"] \arrow[d,"\otimes"'] & G   \\
        E \widehat{\otimes} F \arrow[ru,"T_B"']
    \end{tikzcd}
\end{equation*}
\end{Prop}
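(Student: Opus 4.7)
The proof is essentially a direct corollary of Proposition~\ref{3b} combined with the universal property of the projective tensor product, so the plan is short.

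First, for the implication (i)\,$\Rightarrow$\,(ii), I would invoke the universal property of the projective tensor product: given any bounded bilinear $B \in \mathcal{L}^{(2)}(E,F;G)$, there exists a unique $T_B \in \mathcal{L}(E \widehat{\otimes} F, G)$ with $B = T_B \circ \otimes$, and $\lVert T_B \rVert = \lVert B \rVert$ (this is the content of the isometric isomorphism $\mathcal{L}^{(2)}(E,F;G) \cong \mathcal{L}(E \widehat{\otimes} F, G)$ already recalled just above the statement). Applying Proposition~\ref{3b} with this $T_B$ and the canonical bilinear map $\otimes$, one immediately concludes that $(X,Y)$ has strong (resp. weak) $B$-quadratic covariation, and that it equals $T_B \circ [X,Y]$. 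This simultaneously verifies the final identity $T_B \circ [X,Y] = Q_B(X,Y)$.

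For the reverse implication (ii)\,$\Rightarrow$\,(i), I would observe that the canonical map $\otimes \colon E \times F \to E \widehat{\otimes} F$ is itself an element of $\mathcal{L}^{(2)}(E,F;E \widehat{\otimes} F)$, so specializing the hypothesis of (ii) to $G = E \widehat{\otimes} F$ and $B = \otimes$ yields precisely the existence of the strong (resp. weak) projective tensor quadratic covariation $[X,Y] = Q_{\otimes}(X,Y)$.

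There is no real obstacle here; both directions are essentially bookkeeping, and the only substantive ingredient is the universal property of $E \widehat{\otimes} F$, which lets us factor an arbitrary bilinear map through $\otimes$ so that Proposition~\ref{3b} applies. The jump condition required in Definition~\ref{2b} for the candidate $T_B \circ [X,Y]$ follows automatically from $\Delta [X,Y]_t = \Delta X_t \otimes \Delta Y_t$ by applying $T_B$ and using $T_B(\Delta X_t \otimes \Delta Y_t) = B(\Delta X_t, \Delta Y_t)$, so no additional verification is needed beyond what Proposition~\ref{3b} already handles.
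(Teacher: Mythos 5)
Your proof is correct and follows essentially the same route as the paper: both directions factor an arbitrary $B$ as $T_B \circ \otimes$ via the universal property of the projective tensor product and apply Proposition~\ref{3b}, while the converse specializes (ii) to $B = \otimes$. No gaps.
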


\begin{proof}
First, assume that $X$ and $Y$ have tensor quadratic covariation
along $(\pi_n)$.
Then, by Proposition~\ref{3b},
$X$ and $Y$ have quadratic covariation with respect to 
$B = T_B \circ \otimes$, given by $T_B \circ [X,Y] = Q_B(X,Y)$.
Conversely, if condition~(ii) holds,
we get condition~(i) by applying (ii) to the bounded bilinear map
$\otimes \colon E \times F \to E \widehat{\otimes} F$.
\end{proof}

\begin{Cor} \label{3e}
Suppose that $(X,Y)$ has the weak projective
tensor quadratic covariation along $(\pi_n)$.
Then for any Banach space $G$,
the map $B \mapsto Q_B(X,Y)$ from $\mathcal{L}^{(2)}(E,F;G)$
to $FV(\mathbb{R}_{\geq 0}, G)$ 
is continuous with respect to the topology of pointwise convergence in
$FV(\mathbb{R}_{\geq 0}, G)$.
\end{Cor}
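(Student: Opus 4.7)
The plan is to reduce the statement to a single Lipschitz estimate by exploiting Proposition~\ref{3c} together with the universal property of the projective tensor norm. First, since $(X,Y)$ has weak projective tensor quadratic covariation $[X,Y]$ along $(\pi_n)$, Proposition~\ref{3c} guarantees that for every $B \in \mathcal{L}^{(2)}(E,F;G)$ the weak $B$-quadratic covariation exists and factors as $Q_B(X,Y)_t = T_B([X,Y]_t)$ for all $t \geq 0$, where $T_B$ is the canonical bounded linearization of $B$ through $E \widehat{\otimes} F$.

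Next, I would invoke the standard fact that the assignment $B \mapsto T_B$ is an \emph{isometric} isomorphism between $\mathcal{L}^{(2)}(E,F;G)$ and $\mathcal{L}(E \widehat{\otimes} F, G)$; this is precisely the universal property of the projective norm $\gamma$ already recorded in the paragraph preceding Proposition~\ref{3c}. Since the assignment is linear, one has $T_{B_1} - T_{B_2} = T_{B_1 - B_2}$, and the isometry property gives $\lVert T_{B_1} - T_{B_2} \rVert = \lVert B_1 - B_2 \rVert$.

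Combining these two ingredients, for every $t \geq 0$ and every pair $B_1, B_2 \in \mathcal{L}^{(2)}(E,F;G)$ one obtains the estimate
\begin{equation*}
    \lVert Q_{B_1}(X,Y)_t - Q_{B_2}(X,Y)_t \rVert_G
    = \lVert (T_{B_1} - T_{B_2})[X,Y]_t \rVert_G
    \leq \lVert B_1 - B_2 \rVert \cdot \lVert [X,Y]_t \rVert_{E \widehat{\otimes} F}.
\end{equation*}
Thus $B \mapsto Q_B(X,Y)_t$ is Lipschitz continuous from $\mathcal{L}^{(2)}(E,F;G)$ into $G$ for each fixed $t$, which is exactly continuity with respect to the topology of pointwise convergence in $FV(\mathbb{R}_{\geq 0}, G)$. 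There is no real obstacle here: the entire content of the corollary has already been absorbed into Proposition~\ref{3c} and the universal property of $\gamma$, and what remains is only bookkeeping.
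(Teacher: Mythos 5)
Your proof is correct and follows essentially the same route as the paper: both invoke Proposition~\ref{3c} to write $Q_B(X,Y)_t = T_B([X,Y]_t)$ and then use the isometry of the canonical isomorphism $\mathcal{L}^{(2)}(E,F;G) \cong \mathcal{L}(E \widehat{\otimes} F,G)$ to obtain the Lipschitz bound $\lVert Q_{B}(X,Y)_t - Q_{B'}(X,Y)_t \rVert \leq \lVert B - B' \rVert\, \lVert [X,Y]_t \rVert$. The paper's proof is just a more compressed version of exactly this argument.
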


\begin{proof}
Recall that the canonical isomorphism
$\mathcal{L}(E,F; G) \rightarrow \mathcal{L}(E \widehat{\otimes} F, G)$
is isometric with respect to the operator norm.
Combining this fact with Proposition~\ref{3c}, we see that
\begin{equation*}
    \lVert Q_B(X,Y)_t - Q_{B'}(X,Y)_t \rVert
\leq 
    \lVert B - B' \rVert_{\mathcal{L}^{(2)}(E,F;G)} \lVert [X,Y]_t \rVert_{E \widehat{\otimes} F}.
\end{equation*}
This shows the desired continuity.
\end{proof}

\begin{Prop} \label{3f}
Let $E_i$, $F_i$ ($i = 1,2$), and $G$ be Banach spaces.
We assume that $T_i \in \mathcal{L}(E_i,F_i)$ ($i \in \{1,2 \}$), 
$B \in \mathcal{L}^{(2)}(E_1,E_2;G)$,
and $B' \in \mathcal{L}^{(2)}(F_1,F_2;G)$ satisfy
$B' \circ (T_1 \times T_2) = B$.
\begin{equation*}
    \begin{tikzcd}[column sep=small]
        E_1 \times E_2 \arrow[rd,"B"'] \arrow[rr,"T_1 \times T_2"] & & F_1 \times F_2 \arrow[ld,"{B'}"]  \\
        & G &
    \end{tikzcd}
\end{equation*}
If $(X_1,X_2) \in D(\mathbb{R}_{\geq 0},E_1 \times E_2)$
has strong (resp. weak) $B$-quadratic covariation, 
$(T_1 \circ X_1,T_2 \circ X_2)$ has 
the strong (resp. weak) $B'$-quadratic covariation, given by
$Q_{B'}(T_1 \circ X_1,T_2 \circ X_2) = Q_B (X_1,X_2)$.
\end{Prop}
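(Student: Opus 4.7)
The plan is to verify that the path $Q_B(X_1,X_2)$ itself serves as the $B'$-quadratic covariation of $(T_1 \circ X_1, T_2 \circ X_2)$, i.e., to check directly the two defining conditions of Definition~\ref{2b} (together with the finite variation requirement). The finite variation of $Q_B(X_1,X_2)$ is already part of the hypothesis, so the work is reduced to matching the discrete approximations and the jumps.

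First I would observe the fundamental identity at the level of individual intervals. For $I = \lrbrack r,s \rbrack \in \mathcal{I}$ and $t \geq 0$, the linearity of $T_i$ gives $\delta (T_i \circ X_i)_t(I) = T_i \delta X_{i,t}(I)$, so
\begin{equation*}
    B'\bigl(\delta (T_1 \circ X_1)_t(I),\, \delta (T_2 \circ X_2)_t(I)\bigr)
    = B'(T_1 \times T_2)\bigl(\delta X_{1,t}(I),\, \delta X_{2,t}(I)\bigr)
    = B\bigl(\delta X_{1,t}(I),\, \delta X_{2,t}(I)\bigr),
\end{equation*}
using the hypothesis $B' \circ (T_1 \times T_2) = B$. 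Summing over $I \in \pi_n$ then yields the key pointwise equality of discrete covariations,
\begin{equation*}
    Q^{\pi_n}_{B'}(T_1 \circ X_1, T_2 \circ X_2)_t
    = Q^{\pi_n}_{B}(X_1, X_2)_t
\end{equation*}
for every $n \in \mathbb{N}$ and $t \geq 0$. The convergence of the right-hand side to $Q_B(X_1,X_2)_t$ in the strong (resp.\ weak) topology of $G$ transfers literally to the left-hand side, which handles condition~(i) of Definition~\ref{2b}.

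For condition~(ii) I would use the same linearity to note $\Delta(T_i \circ X_i)_t = T_i \Delta X_{i,t}$, whence
\begin{equation*}
    B'(\Delta(T_1 \circ X_1)_t, \Delta(T_2 \circ X_2)_t)
    = B(\Delta X_{1,t}, \Delta X_{2,t})
    = \Delta Q_B(X_1, X_2)_t,
\end{equation*}
by the jump property of $Q_B(X_1,X_2)$. Combined with the previous step this shows that $Q_B(X_1,X_2)$ meets every requirement to be the strong (resp.\ weak) $B'$-quadratic covariation of $(T_1 \circ X_1, T_2 \circ X_2)$, giving the claimed formula $Q_{B'}(T_1 \circ X_1, T_2 \circ X_2) = Q_B(X_1, X_2)$.

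There is no real obstacle here; the statement is essentially a functoriality of the definition, and both verifications rely only on the linearity of the $T_i$ and the factorization $B = B' \circ (T_1 \times T_2)$. The only point worth being careful about is to carry the strong/weak distinction uniformly through the transfer of convergence, which is automatic because the equality of the discrete sums is pointwise in $t$ and does not involve any topological step on the $G$-side.
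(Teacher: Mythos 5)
Your proof is correct and follows essentially the same route as the paper's: both verify that the discrete sums coincide interval by interval via the factorization $B = B' \circ (T_1 \times T_2)$ and linearity of the $T_i$, and then check the jump condition the same way. Nothing to add.
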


\begin{proof}
By a direct calculation, we get
\begin{equation*}
    B'(\delta (T_1 \circ X_1)_t, \delta (T_2 \circ X_2)_t)(I)
    =
    B(\delta (X_1)_t, \delta (X_2)_t)(I).
\end{equation*}
Therefore
\begin{equation*}
    \lim_{n \to \infty} 
    \sum_{\lrbrack r,s \rbrack \in \pi_n} 
        B'(\delta (T_1 \circ X_1)_t, \delta (T_2 \circ X_2)_t)(I)
    =
    Q_B(X_1,X_2)_t
\end{equation*}
holds in the suitable topology.
Moreover, 
\begin{equation*}
    \Delta Q_B(X_1,X_2)_t
    =
    B(\Delta (X_1)_t,\Delta (X_2)_t) 
    =
    B'\left( \Delta T_1(X_1)_t,\Delta T_2 (X_2)_t \right)
\end{equation*}
for all $t \in \mathbb{R}_{\geq 0}$.
Hence, $Q_B$ is the $B'$-quadratic covariation of
$T_1 \circ X_1$ and $X_2 \circ X_2$.
\end{proof}

It follows from above results that 
every $(X,Y)$ with tensor quadratic variation
has `cylindrical' quadratic covariation.

\begin{Cor} \label{3fb}
Let $E$ and $F$ be Banach spaces and let $\alpha$ be a reasonable crossnorm on $E \otimes F$.
Suppose that $(X,Y) \in D(\mathbb{R}_{\geq 0},E \times F)$ has
weak $\alpha$-tensor quadratic variation along a sequence of partitions $(\pi_n)$.
Then, for each $(x^\ast,y^\ast) \in E^\ast \times F^\ast$,
real-valued paths $x^\ast X$ and $y^\ast Y$ have
the quadratic covariation
$[x^\ast X, y^\ast Y] = Q_{x^\ast \otimes y^\ast}(X,Y) = x^\ast \otimes y^\ast([X,Y])$
along $(\pi_n)$.
Here, $x^\ast \otimes y^\ast \colon X \widehat{\otimes}_{\alpha} Y \to \mathbb{R}$
is the bounded linear form defined to be
$(x^\ast \otimes y^\ast)(x \otimes y) = x^\ast(x) y^\ast(y)$
for all $(x,y) \in X \times Y$.
\end{Cor}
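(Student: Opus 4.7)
The plan is to derive the corollary by chaining Propositions~\ref{3b} and~\ref{3f}; I read $[X,Y]$ in the statement as ${^\alpha [X,Y]}$, the object whose existence is actually hypothesised.

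First I would invoke the reasonable crossnorm property of $\alpha$ to see that the bilinear form $(x,y) \mapsto x^\ast(x) y^\ast(y)$ on $E \times F$ factors through a unique bounded linear functional $T \colon E \widehat{\otimes}_{\alpha} F \to \mathbb{R}$ with $\lVert T \rVert \leq \lVert x^\ast \rVert \lVert y^\ast \rVert$. Applying Proposition~\ref{3b} to the weak $\otimes$-quadratic covariation ${^\alpha [X,Y]}$ with this $T$ yields that $(X,Y)$ has weak $T \circ \otimes$-quadratic covariation equal to the composite real-valued path $T \circ {^\alpha [X,Y]}$. Since $T \circ \otimes \colon E \times F \to \mathbb{R}$ is precisely the bilinear form $x^\ast \otimes y^\ast$, this establishes the second equality $Q_{x^\ast \otimes y^\ast}(X,Y) = (x^\ast \otimes y^\ast)({^\alpha [X,Y]})$.

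Next I would obtain the remaining equality by applying Proposition~\ref{3f} with $T_1 = x^\ast$, $T_2 = y^\ast$, $B = x^\ast \otimes y^\ast$ seen as the bilinear form $E \times F \to \mathbb{R}$, and $B'$ the scalar multiplication $\mathbb{R} \times \mathbb{R} \to \mathbb{R}$; the compatibility $B' \circ (T_1 \times T_2) = B$ is then immediate. The proposition furnishes the weak $B'$-quadratic covariation of $(x^\ast X, y^\ast Y)$, which is by definition the real-valued scalar quadratic covariation $[x^\ast X, y^\ast Y]$, and it coincides with $Q_B(X,Y) = Q_{x^\ast \otimes y^\ast}(X,Y)$. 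I do not anticipate any serious obstacle here beyond keeping apart the two roles played by the symbol $x^\ast \otimes y^\ast$, namely as a bilinear form on $E \times F$ and as its unique continuous linear extension to $E \widehat{\otimes}_{\alpha} F$; the one technical point is the reasonable crossnorm estimate granting boundedness of $T$, which is precisely why the hypothesis demands $\alpha$ to be reasonable.
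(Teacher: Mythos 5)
Your proposal is correct and follows essentially the same route as the paper: the second equality via Proposition~\ref{3b} applied to the bounded linear functional induced by $x^\ast \otimes y^\ast$ on $E \widehat{\otimes}_{\alpha} F$ (which is where the reasonable-crossnorm hypothesis enters), and the first via Proposition~\ref{3f} with $T_1 = x^\ast$, $T_2 = y^\ast$ and $B'$ the scalar multiplication. The paper's proof is a one-line citation of the same two propositions; your write-up merely supplies the details.
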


\begin{proof}
The equations $[x^\ast X, y^\ast Y] = Q_{x^\ast \otimes y^\ast}(X,Y)$
and $Q_{x^\ast \otimes y^\ast}(X,Y) = x^\ast \otimes y^\ast([X,Y])$
follow from Propositions~\ref{3f} and \ref{3b}, respectively.
\end{proof}

To derive the trace representation formula for the scalar quadratic variations,
let us recall the definition of the trace operator.
Let $(H,\langle \phantom{x},\phantom{x} \rangle_H)$ be a Hilbert space.
The trace operator $\mathop{\mathrm{tr}}\nolimits_H \colon H \otimes H \to \mathbb{R}$
is a unique linear form satisfying 
$\mathop{\mathrm{tr}}\nolimits_H(x \otimes y) = \langle x,y \rangle_H$ for all $x,y \in H$.
Since $\operatorname{tr}_H$ is contractive with respect to the 
projective norm on $H \otimes H$, 
it can be uniquely extended to the completion $H \widehat{\otimes} H$.

\begin{Prop} \label{3g}
Let $H$ be a Hilbert space
and $X \colon \mathbb{R}_{\geq 0} \to H$ a c\`{a}dl\`{a}g path.
If $X$ has weak projective tensor quadratic variation
along a sequence of partitions $(\pi_n)$,
then it has the scalar quadratic variation given by 
$Q(X)_t = \operatorname{tr}_H([X,X]_t)$ for all $t$.
\end{Prop}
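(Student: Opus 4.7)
The plan is to reduce everything to the identity $\|x\|_H^2 = \operatorname{tr}_H(x\otimes x)$, which holds by the definition of $\operatorname{tr}_H$ on elementary tensors. Applying this termwise, I would first observe that for every $t \geq 0$ and every partition $\pi_n$,
\begin{equation*}
    Q^{\pi_n}(X)_t
    = \sum_{I \in \pi_n} \lVert \delta X_t(I) \rVert_H^2
    = \sum_{I \in \pi_n} \operatorname{tr}_H \bigl( \delta X_t(I) \otimes \delta X_t(I) \bigr)
    = \operatorname{tr}_H \bigl( Q^{\pi_n}_{\otimes}(X,X)_t \bigr),
\end{equation*}
where the last equality uses linearity of $\operatorname{tr}_H$ and the fact that only finitely many of the tensor summands are nonzero (the remaining intervals $I$ contribute $0 \otimes 0 = 0$, or more formally the sum is truncated by $t$).

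Next I would pass to the limit along $n$. Since $\operatorname{tr}_H$ is contractive for the projective norm, it extends to a bounded linear form on $H \widehat{\otimes} H$, and so is in particular continuous for the weak topology. By hypothesis $Q^{\pi_n}_{\otimes}(X,X)_t \to [X,X]_t$ weakly in $H \widehat{\otimes} H$, so
\begin{equation*}
    Q^{\pi_n}(X)_t \xrightarrow[n \to \infty]{} \operatorname{tr}_H([X,X]_t).
\end{equation*}
Define $Q(X)_t \coloneqq \operatorname{tr}_H([X,X]_t)$. This is a real-valued {\cadlag} path because $[X,X]$ is a {\cadlag} path of finite variation in $H \widehat{\otimes} H$ and $\operatorname{tr}_H$ is continuous. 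It is increasing as the pointwise limit of the sequence $Q^{\pi_n}(X)$, each member of which is nondecreasing in $t$.

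Finally I would verify the jump condition required in Definition~\ref{2c}(ii). By the definition of the projective tensor quadratic variation one has $\Delta [X,X]_t = \Delta X_t \otimes \Delta X_t$, and therefore
\begin{equation*}
    \Delta Q(X)_t
    = \operatorname{tr}_H\bigl( \Delta [X,X]_t \bigr)
    = \operatorname{tr}_H\bigl( \Delta X_t \otimes \Delta X_t \bigr)
    = \lVert \Delta X_t \rVert_H^2.
\end{equation*}
Thus $Q(X)$ satisfies both conditions of Definition~\ref{2c}, proving the claim. I do not foresee a real obstacle here: the only subtlety is ensuring that $\operatorname{tr}_H$ indeed extends continuously to $H \widehat{\otimes} H$, which is standard and already implicitly used in the preceding discussion, and noting that the trace of the weak limit equals the limit of the traces.
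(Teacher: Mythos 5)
Your argument follows essentially the same route as the paper's proof: the paper packages the computation as an application of Proposition~\ref{3c} (equivalently, Proposition~\ref{3b} with $T = \operatorname{tr}_H$) to the bilinear map $\langle\,\cdot\,,\,\cdot\,\rangle_H = \operatorname{tr}_H \circ \otimes$, and your termwise identity $Q^{\pi_n}(X)_t = \operatorname{tr}_H\bigl(Q^{\pi_n}_{\otimes}(X,X)_t\bigr)$ combined with the weak continuity of the bounded functional $\operatorname{tr}_H$ is exactly that argument unwound. The convergence step, the c\`{a}dl\`{a}g property, and the jump computation are all correct.

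There is, however, one incorrect step: your justification of monotonicity. The paths $t \mapsto Q^{\pi_n}(X)_t$ are \emph{not} nondecreasing in $t$: for the interval $\lrbrack r,u \rbrack \in \pi_n$ containing $t$ the summand is $\lVert X_t - X_r \rVert^2$, which can decrease as $t$ increases (take $H = \mathbb{R}$ with $X$ rising from $0$ to $1$ and returning to $0$ inside a single interval of $\pi_n$). So ``pointwise limit of nondecreasing functions'' does not apply. Concretely, for $s < t$ one has
\begin{equation*}
    Q^{\pi_n}(X)_t - Q^{\pi_n}(X)_s
    = \sum_{\lrbrack r,u \rbrack \in \pi_n} \bigl\lVert X_{(u\wedge t)\vee s} - X_{(r\wedge t)\vee s} \bigr\rVert^2
    + 2 \bigl\langle X_{\overline{\pi_n}(s)\wedge t} - X_s,\; X_s - X_{\underline{\pi_n}(s)} \bigr\rangle ,
\end{equation*}
and only the first term is nonnegative; the cross term is precisely what Lemma~\ref{4eb} is designed to kill, but that lemma requires Condition~(C), which Proposition~\ref{3g} does not assume. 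The repair consistent with the paper is to invoke the identification, recorded after Definition~\ref{2c}, of the scalar quadratic variation with the inner-product quadratic variation $Q_{\langle\,\cdot\,,\,\cdot\,\rangle_H}(X,X)$ when the state space is a Hilbert space, so that monotonicity comes as part of that identification rather than from the (false) claim that the discrete sums are monotone in $t$.
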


\begin{proof}
Applying Proposition~\ref{3c}
to the bounded bilinear map
$\langle \phantom{x},\phantom{x} \rangle_H = \operatorname{tr}_H \circ \otimes$,
we see that $Q(X)_t = Q_{\langle \phantom{x},\phantom{x} \rangle_H}(X,X)_t = \operatorname{tr}_H([X,X]_t)$
holds for all $t \geq 0$.
\end{proof}

\section{Integral representation of quadratic variations by means of scalar quadratic variation}

In this section, we show integral representation formulae
of $B$-quadratic variations with respect
to the scalar quadratic variation.
For a similar result in the
classical martingale theory in Hilbert spaces,
see Metivier~\cite[Section 21]{Metivier_1982}.
Throughout this section,
let $E$ and $G$ be Banach spaces
and $B \colon E \times E \to G$ be a bounded bilinear map.
Moreover, let $(\pi_n)$ be a sequence of partitions of $\mathbb{R}_{\geq 0}$.

\begin{Prop} \label{4d}

Suppose that $X \in D(\mathbb{R}_{\geq 0},E)$
has weak $B$-quadratic variation and 
finite 2-variation along $(\pi_n)$
and that $(\pi_n)$ satisfies Condition~(C) for $X$.
Then
\begin{equation} \label{4e}
    \left\lVert Q_B(X,X)_t - Q_B(X,X)_s \right\rVert_G \leq \lVert B \rVert (Q(X)_t - Q(X)_s)
\end{equation}
for all $s,t \in \mathbb{R}_{\geq 0}$ with $s \leq t$. Consequently, $Q_B(X,X)$ is absolutely continuous with respect to $Q(X)$.
\end{Prop}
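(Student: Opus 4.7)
The plan is to promote a clean finite-sum inequality valid at partition points of each $\pi_n$ to the claimed inequality \eqref{4e}, and then deduce the absolute continuity as a routine measure-theoretic consequence.

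First, I would observe that for $a \leq b$ both lying in $\pi_n^{\mathrm{p}} \cup \{ 0 \}$, the difference $Q_B^{\pi_n}(X,X)_b - Q_B^{\pi_n}(X,X)_a$ telescopes to $\sum_{\lrbrack r,u \rbrack \in \pi_n,\, a \leq r < u \leq b} B(X_u - X_r, X_u - X_r)$, because an endpoint of one $\pi_n$-interval can never lie in the interior of another; in particular no ``straddling'' term appears. Applying the bilinear estimate $\lVert B(x,x) \rVert_G \leq \lVert B \rVert \lVert x \rVert^2$ termwise and comparing with the analogous scalar sum yields $\lVert Q_B^{\pi_n}(X,X)_b - Q_B^{\pi_n}(X,X)_a \rVert_G \leq \lVert B \rVert \bigl( Q^{\pi_n}(X)_b - Q^{\pi_n}(X)_a \bigr)$. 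For general $s \leq t$ I would then apply this with $a = \overline{\pi_n}(s)$ and $b = \overline{\pi_n}(t)$ and pass to the limit, using the weak pointwise convergence of $Q_B^{\pi_n}(X,X)$, the pointwise convergence of $Q^{\pi_n}(X)$, and the weak lower semi-continuity of $\lVert \cdot \rVert_G$.

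The hard part will be this limit passage: since $\lvert \pi_n \rvert$ is not required to tend to zero, $\overline{\pi_n}(s)$ and $\overline{\pi_n}(t)$ need not converge to $s$ and $t$, so I must control how $Q_B^{\pi_n}(X,X)_{\overline{\pi_n}(t)}$ (and the scalar analogue) relates to $Q_B(X,X)_t$ in the limit. My approach is to exploit the full strength of Condition~(C): (C3) drives the oscillation of $X - J_\varepsilon(X)$ along $\pi_n$ to zero, which kills the ``continuous'' part of the boundary correction in the double limit $n \to \infty$, $\varepsilon \downarrow 0$; (C1) and (C2) eventually isolate each large jump of $X$ in its own $\pi_n$-interval with $\delta X_t(\pi_n(\tau)) \to \Delta X_\tau$, and the jump identities $\Delta Q_B(X,X)_\tau = B(\Delta X_\tau, \Delta X_\tau)$ and $\Delta Q(X)_\tau = \lVert \Delta X_\tau \rVert^2$ built into the definitions of weak $B$- and scalar quadratic variations, paired with $\lVert B(y,y) \rVert_G \leq \lVert B \rVert \lVert y \rVert^2$, reconcile the contribution of each big jump on both sides. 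Thus the boundary adjustments survive the limit in a form compatible with \eqref{4e}.

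Once \eqref{4e} is established, the absolute continuity is standard: the inequality asserts that the Stieltjes vector measure associated with $Q_B(X,X)$ is dominated in norm by $\lVert B \rVert$ times the Stieltjes positive measure of $Q(X)$ on the $\pi$-system of half-open intervals, and a Dynkin class argument then extends this domination to every bounded Borel subset of $\mathbb{R}_{\geq 0}$, giving $Q_B(X,X) \ll Q(X)$.
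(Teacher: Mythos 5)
Your first two steps are sound: the telescoping identity at points of $\pi_n^{\mathrm{p}}$, the termwise estimate $\lVert B(x,x) \rVert_G \leq \lVert B \rVert\, \lVert x \rVert^2$, and the idea of testing against $z^\ast \in G^\ast$ with $\lVert z^\ast \rVert \leq 1$ to pass the inequality through the weak limit are exactly the elementary core of the paper's argument. The gap is in the limit passage that you yourself flag as the hard part, and the strategy you sketch does not close it. Evaluating at $a = \overline{\pi_n}(s)$ and $b = \overline{\pi_n}(t)$ produces, in the limit, an inequality between increments of $Q_B(X,X)$ and $Q(X)$ over an interval that may be strictly larger than $\lrbrack s,t \rbrack$: Condition~(C) does not force $\overline{\pi_n}(t) \to t$. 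For instance, take $E = \mathbb{R}$, $X = 1_{[\tau,\infty)}$ with $s \leq t < \tau$, and partitions with $\tau \in \pi_n(t)$ for every $n$; then (C1)--(C3) all hold, yet $Q^{\pi_n}(X)_{\overline{\pi_n}(t)} \to 1 \neq 0 = Q(X)_t$. Your plan to then ``reconcile the contribution of each big jump on both sides'' via the jump identities amounts to subtracting matched quantities from the two sides of a norm inequality, and that operation is not valid: from $\lVert A \rVert \leq \lVert B \rVert c$ and $\lVert a \rVert \leq \lVert B \rVert b$ one cannot conclude $\lVert A - a \rVert \leq \lVert B \rVert (c - b)$. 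So the correction step fails as stated.

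The paper's proof avoids this by never leaving the endpoints $s$ and $t$. Lemma~\ref{4ec} shows that $Q_B(X,X)_t - Q_B(X,X)_s$ is the limit of the sum of $B$ applied to the increments \emph{clipped to $[s,t]$}, namely $\sum_{\lrbrack r,u \rbrack \in \pi_n} B\bigl( X_{(u\wedge t)\vee s} - X_{(r\wedge t)\vee s},\, X_{(u\wedge t)\vee s} - X_{(r\wedge t)\vee s} \bigr)$, to which the termwise bound applies directly against the analogous clipped scalar sum. The discrepancy between this clipped sum and $Q_B^{\pi_n}(X,X)_t - Q_B^{\pi_n}(X,X)_s$ is not a full boundary increment but only the pair of cross terms $B(X_{\overline{\pi_n}(s)\wedge t} - X_s,\, X_s - X_{\underline{\pi_n}(s)})$ plus its transpose; there is no error at $t$ because $\delta X_t$ is already truncated there. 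This is the structural point your route misses: the boundary error must be kept in product form, because neither factor $X_{\overline{\pi_n}(s)\wedge t} - X_s$ nor $X_s - X_{\underline{\pi_n}(s)}$ tends to zero in general, whereas Lemma~\ref{4eb} shows, by a case analysis driven by (C1)--(C3), that their product always does. If you replace the partition-point evaluation by the clipped sums and prove the vanishing of these cross terms, your argument becomes the paper's. Your concluding step is fine, though a Dynkin class argument is more than needed: \eqref{4e} already dominates the variation of $Q_B(X,X)$ over every interval by $\lVert B \rVert$ times the corresponding $Q(X)$-increment, which gives absolute continuity at once.
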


To prove Proposition~\ref{4d}, we present some technical lemmas.

\begin{Lem} \label{4eb}
If $(\pi_n)$ satisfies (C) for $X \in D(\mathbb{R}_{\geq 0},E)$,
then
\begin{gather*}
    \lim_{n \to \infty}
        (X_{\overline{\pi_n}(s) \wedge t} - X_{s})
        \otimes (X_{s} - X_{\underline{\pi_n}(s)})
    = 0
\end{gather*}
holds for all $s,t \in \mathbb{R}_{\geq 0}$ satisfying $0 < s < t$.
\end{Lem}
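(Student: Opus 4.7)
My plan is to use the identity $\lVert u \otimes v \rVert_{E \widehat{\otimes} E} = \lVert u \rVert \lVert v \rVert$ for the projective tensor norm (or the corresponding inequality for any reasonable crossnorm) to reduce the claim to showing that
\[
    \lVert X_{\overline{\pi_n}(s) \wedge t} - X_s \rVert \cdot \lVert X_s - X_{\underline{\pi_n}(s)} \rVert \xrightarrow[n \to \infty]{} 0.
\]
The main tool will be the splitting $X = X^\varepsilon + J_\varepsilon(X)$ with $X^\varepsilon := X - J_\varepsilon(X)$ for an auxiliary parameter $\varepsilon > 0$, coupled with conditions (C1) and (C3).

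Fixing $\varepsilon > 0$, (C1) implies that, for all sufficiently large $n$, the set $\pi_n(s) \cap [0,t] \cap D_\varepsilon(X)$ contains at most one element, say $\tau_n$. Since $\pi_n(s) = \lrbrack \underline{\pi_n}(s), \overline{\pi_n}(s) \rbrack$ and $\underline{\pi_n}(s) < s \leq \overline{\pi_n}(s)$, this point (if present) falls either in $\lrbrack \underline{\pi_n}(s), s \rbrack$ or in $\lrbrack s, \overline{\pi_n}(s) \wedge t \rbrack$. In the first subcase (or if $\tau_n$ is absent), $J_\varepsilon(X)$ is constant on $[s, \overline{\pi_n}(s) \wedge t]$, so
\[
    X_{\overline{\pi_n}(s) \wedge t} - X_s = X^\varepsilon_{\overline{\pi_n}(s) \wedge t} - X^\varepsilon_s,
\]
which has norm at most $O^+_t(X^\varepsilon; \pi_n)$. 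The other factor $X_s - X_{\underline{\pi_n}(s)}$ then absorbs the possible jump $\Delta X_{\tau_n}$, so its norm is bounded by $O^+_t(X^\varepsilon; \pi_n) + M$ with $M := \sup_{u \in [0,t]} \lVert \Delta X_u \rVert < \infty$; the bound $\lVert X^\varepsilon_s - X^\varepsilon_{\underline{\pi_n}(s)} \rVert \leq O^+_t(X^\varepsilon; \pi_n)$ follows from the right-continuity of $X^\varepsilon$ at $\underline{\pi_n}(s)$. The case where $\tau_n$ lies to the right of $s$ is handled symmetrically.

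In every scenario, for $n$ sufficiently large,
\[
    \lVert X_{\overline{\pi_n}(s) \wedge t} - X_s \rVert \cdot \lVert X_s - X_{\underline{\pi_n}(s)} \rVert
    \leq O^+_t(X^\varepsilon; \pi_n)\bigl(O^+_t(X^\varepsilon; \pi_n) + M\bigr).
\]
Applying $\varlimsup_{n \to \infty}$ and then $\varlimsup_{\varepsilon \downarrow 0}$ drives the right-hand side to $0$ by (C3), while the left-hand side is independent of $\varepsilon$, which yields the desired convergence. The only genuinely delicate point will be the case analysis locating the single possible large jump $\tau_n$ on one side or the other of $s$; beyond this bookkeeping, the argument is a direct consequence of (C1) and (C3), and (C2) plays no role.
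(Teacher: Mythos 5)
Your proof is correct, and it takes a genuinely different route from the paper's. The paper argues by cases on the local behaviour of $X$ at $s$: discontinuity at $s$ is handled via (C2), while the continuity case is split further (accumulating jumps, subcases A, B, C, C-1, C-2, C-3) using (C1) and (C3), and in each configuration one shows that one of the two factors tends to $0$ while the other stays bounded. You instead prove a single uniform quantitative bound: since (C1) puts at most one $\varepsilon$-large jump $\tau_n$ inside $\pi_n(s) \cap [0,t]$, and that jump can lie on only one side of $s$, one factor equals an increment of $X - J_{\varepsilon}(X)$ over (part of) $\pi_n(s) \cap [0,t]$ and is bounded by $O^+_t(X - J_{\varepsilon}(X);\pi_n)$, while the other absorbs at most the one jump and is bounded by $O^+_t(X - J_{\varepsilon}(X);\pi_n) + M$; then (C3) kills the product after $\varlimsup_n$ and $\varlimsup_{\varepsilon \downarrow 0}$. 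The details that need checking all go through: $s$ and $\overline{\pi_n}(s) \wedge t$ both lie in $\pi_n(s) \cap [0,t]$ so the oscillation bound applies to the right-hand factor; the left endpoint $\underline{\pi_n}(s)$ is excluded from the half-open interval $\pi_n(s)$, but your right-continuity limit argument correctly recovers the bound there (this is exactly why $O^+$ rather than $O^-$ is the right quantity); $M = \sup_{u \in [0,t]} \lVert \Delta X_u \rVert$ is finite because $X$ is bounded on $[0,t]$; and $\varlimsup_n(a_n b_n) \leq (\varlimsup_n a_n)(\varlimsup_n b_n)$ for nonnegative bounded sequences. What your approach buys is a shorter argument that dispenses with (C2) and with the paper's delicate analysis of where jump times accumulate near $s$, and it additionally shows the lemma already follows from (C1) and (C3) alone; what the paper's case analysis buys is the finer information of which individual factor vanishes in each configuration, which is not needed for the statement.
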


\begin{proof}
Fix two real numbers $s$ and $t$ such that $0 < s < t$.

\emph{Case 1: $X$ is discontinuous at $s$.}
If $s$ is a discontinuous point of $X$,
Condition~(C2) implies
\begin{equation*}
    X_{\overline{\pi_n}(s) \wedge t} - X_{s}
    =
    (X_{\overline{\pi_n}(s) \wedge t} - X_{\underline{\pi_n}(s) \wedge t})
    - (X_{\overline{\pi_n}(s) \wedge s} - X_{\underline{\pi_n}(s) \wedge s}) 
    \xrightarrow[n \to \infty]{} 
    \Delta X_s - \Delta X_s
    =
    0.
\end{equation*}

\emph{Case 2: $X$ is continuous at $s$.}
First, note that 
$\overline{\pi_n}(s) \to s$ holds
if $D(X) \cap [s,s+\varepsilon]$ has infinitely many elements
for every $\varepsilon > 0$.
This follows from Condition~(C1).
Similarly, $\underline{\pi_n}(s) \to s$ holds
if $D(X) \cap [s-\varepsilon,s ]$ has infinitely many elements
for every $\varepsilon > 0$.
In both cases, we have the desired convergence.

Next, assume $[s-\alpha,s+\alpha]$
contains finitely many points of $D(X)$,
where $0 < \alpha < s$.
Then there are three cases to be considered:
\begin{enumerate}
    \item[A.] $\omega(X;[s,s+\varepsilon]) > 0$ 
        for all $\varepsilon \in \lrbrack 0,\alpha \rlbrack$;
    \item[B.] $\omega(X;[s-\varepsilon,s]) > 0$
        for all $\varepsilon \in \lrbrack 0,\alpha \rlbrack$;
    \item[C.] $\omega(X;[s-\varepsilon,s+\varepsilon]) = 0$ 
        for some $\varepsilon \in \lrbrack 0,\alpha \rlbrack$.
\end{enumerate}

\emph{Case~A.}
Take an arbitrary $\varepsilon \in \lrbrack 0,\alpha \rlbrack$
satisfying $\omega(X;[s,s+\varepsilon]) > \sup_{u \in [s,s+\varepsilon]} \lVert \Delta X_u \rVert$.
The existence of such an $\varepsilon$ follows from the assumption 
that $[s-\alpha,s+\alpha] \cap D(X)$ is finite.
Since there are only finitely many elements in $[s,s+\varepsilon] \cap D(X)$,
we can choose an $N \in \mathbb{N}$ such that 
$\pi_n(s) \cap D(X) \cap [s,s+\varepsilon]$ has at most one element 
for all $n \geq N$.
If $n \geq N$ and $[s,s+\varepsilon] \subset \pi_n(s)$, we have
\begin{align*}
    \omega(X;[s,s+\varepsilon])
    & \leq 
    \omega(X-J_{\delta}(X);[s,s+\varepsilon]) 
    + \sup_{u \in [s,s+\varepsilon]} \lVert \Delta X_u \rVert  \\
    & \leq 
    \omega(X-J_{\delta}(X);\pi_n(s) \cap [0,s+\alpha])
    + \sup_{u \in [s,s+\varepsilon]} \lVert \Delta X_{u} \rVert
\end{align*}
for any positive $\delta$, and hence 
\begin{equation*}
    O_{s+\alpha}^{+}(X-J_{\delta}(X);\pi_n)
    \geq
    \omega(X;[s,s+\varepsilon]) - \sup_{u \in [s,s+\varepsilon]} \lVert \Delta X_{u} \rVert  
    > 
    0
\end{equation*}
holds under the same condition.
Combining this estimate with Condition~(C3),
we see that there are not infinitely many 
$n$ satisfying $[s,s+\varepsilon] \subset \pi_n(s)$.
In other words, we have $\overline{\pi_n}(s) \in [s,s+\varepsilon]$
for sufficiently large $n$.
Since $\varepsilon$ is chosen arbitrarily,
we obtain the convergence $\overline{\pi}_n(s) \to s$ in this case.
Therefore, we obtain the desired convergence.

\emph{Case~B.}
In this case, we can deduce that $\underline{\pi_n}(s) \to s$
by a discussion similar to that for Case~A.

\emph{Case~C.}
Set
\begin{equation*}
    s' = \sup \{ u \leq s \mid X(u) \neq X(s) \}, \qquad 
    s'' = \inf \{ u \geq s \mid X(u) \neq X(s) \}.
\end{equation*}
Then $s' < s < s''$ holds by the assumption.

\emph{Case~C-1: $X$ is continuous at $s'$.}
In this case,
we can deduce by the same argument as that for Case~B that
for any $\varepsilon > 0$,
there is an $N$ satisfying
$\underline{\pi_n}(s) \in \lbrack s'-\varepsilon, s \rlbrack$
for all $n \geq N$.
Therefore,
\begin{equation*}
    \lim_{n \to \infty} \left( X_s - X_{\underline{\pi_n}(s)} \right) = 0.
\end{equation*}

\emph{Case~C-2: $X$ is continuous at $s''$.}
Similarly, we have
\begin{equation*}
    \lim_{n \to \infty} (X_{\overline{\pi_n}(s) \wedge t} - X_{s}) = 0.
\end{equation*}

\emph{Case~C-3: $X$ is discontinuous at both $s'$ and $s''$. }
Let $\delta = \lVert \Delta X_{s'} \rVert \wedge \lVert \Delta X_{s''} \rVert$.
Then choose an $N \in \mathbb{N}$
such that $I \cap [0,t] \cap D_{\delta/2}(X)$
has at most one element for every $I \in \pi_n$
and every $n \geq N$.
If $n \geq N$, we have either
$\overline{\pi_n}(s) \in \lbrack s,s'' \rlbrack$
or $\underline{\pi_n}(s) \in \lbrack s',s \rlbrack$.
In both cases, we get
\begin{equation*}
    (X_{\overline{\pi_n}(s) \wedge t} - X_{s})
    \otimes (X_{s} - X_{\underline{\pi_n}(s)})
    = 
    0.
\end{equation*}

By the discussion above, we can 
conclude that
\begin{gather*}
    \lim_{n \to \infty}
        (X_{\overline{\pi_n}(s) \wedge t} - X_{s})
        \otimes (X_{s} - X_{\underline{\pi_n}(s)})
    = 0
\end{gather*}
holds if $X$ is continuous at $s$.
\end{proof}

\begin{Lem} \label{4ec}
Suppose that $(\pi_n)$
satisfies Condition~(C) for $X \in D(\mathbb{R}_{\geq 0},E)$.
\begin{enumerate}
    \item If $X$ has strong (resp. weak) $B$-quadratic variation 
        along $(\pi_n)$, then
        \begin{equation} \label{4ed}
            Q_B(X,X)_t - Q_B(X,X)_s
        = 
            \lim_{n \to \infty}
                \sum_{\lrbrack r,u \rbrack \in \pi_n}
                B\left( 
                    X_{(u \wedge t) \vee s} - X_{(r \wedge t) \vee s}, 
                    X_{(u \wedge t) \vee s} - X_{(r \wedge t) \vee s}
                \right) 
        \end{equation}
        for all $t \geq s \geq 0$ in the norm (resp. weak) topology.
    \item If $X$ has scalar quadratic variation 
        along $(\pi_n)$, then $Q(X)$ satisfies
        \begin{equation} \label{4ee}
            Q(X)_t - Q(X)_s
        = 
            \lim_{n \to \infty}
                \sum_{\lrbrack r,u \rbrack \in \pi_n}
                \lVert X_{(u \wedge t) \vee s} - X_{(r \wedge t) \vee s} \rVert^2
        \end{equation}
        for all $s$ and $t$ with $s \leq t$.
\end{enumerate}
\end{Lem}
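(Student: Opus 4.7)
The plan is to compare the increment $Q_B^{\pi_n}(X,X)_t - Q_B^{\pi_n}(X,X)_s$ term-by-term against the restricted sum on the right-hand side of~\eqref{4ed}, and to show that the discrepancy is concentrated on the single interval $\pi_n(s)$ of $\pi_n$ containing $s$; Lemma~\ref{4eb} will then handle the vanishing of the leftover cross term.

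The edge cases are immediate. When $s = 0$, one has $\delta X_0(I) = 0$ for every $I \in \pi_n$, so both sides reduce to $Q_B(X,X)_t$; when $s = t$, both sides vanish. I therefore focus on $0 < s < t$. For each $I = \lrbrack r, u \rbrack \in \pi_n$, I sort $I$ according to its position relative to $(s, t]$. If $u \leq s$ or $r \geq t$, the interval contributes nothing to either sum; if $s \leq r < u \leq t$ or $s \leq r < t < u$, the two contributions coincide by a direct calculation. The only intervals that can produce a discrepancy are those with $r < s$, and these amount to the single interval $\pi_n(s)$. Writing $a_n = X_{\overline{\pi_n}(s) \wedge t} - X_s$ and $b_n = X_s - X_{\underline{\pi_n}(s)}$, the bilinear expansion of $B(a_n + b_n, a_n + b_n) - B(b_n, b_n) - B(a_n, a_n)$ shows that, in both subcases $\overline{\pi_n}(s) \leq t$ and $\overline{\pi_n}(s) > t$, the leftover discrepancy equals the symmetric cross term $B(a_n, b_n) + B(b_n, a_n)$.

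To conclude~(i), I use the factorization $B = T_B \circ \otimes$ given by Proposition~\ref{3c}, together with continuity of $T_B \colon E \widehat{\otimes} E \to G$. Since $a_n \otimes b_n \to 0$ in $E \widehat{\otimes} E$ by Lemma~\ref{4eb}, $B(a_n, b_n) = T_B(a_n \otimes b_n) \to 0$ in norm, and likewise $B(b_n, a_n) \to 0$. Passing to the limit in the norm or weak topology, according to the mode of convergence of $Q_B$, produces~\eqref{4ed}.

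For~(ii), the same case analysis applies, but the algebraic expansion of the bilinear form must be replaced by the non-bilinear estimate $\lvert \lVert a + b \rVert^2 - \lVert a \rVert^2 - \lVert b \rVert^2 \rvert \leq 4 \lVert a \rVert \lVert b \rVert$, which is a direct consequence of the triangle inequality. Since the projective crossnorm satisfies $\gamma(a_n \otimes b_n) = \lVert a_n \rVert \lVert b_n \rVert$, Lemma~\ref{4eb} again gives $\lVert a_n \rVert \lVert b_n \rVert \to 0$, so the scalar discrepancy vanishes in the limit and~\eqref{4ee} follows. The main obstacle is precisely this non-bilinearity of the norm in~(ii); aside from that single estimate, the structure of the argument mirrors~(i).
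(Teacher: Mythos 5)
Your proposal is correct and follows essentially the same route as the paper: both isolate the discrepancy between $Q_B^{\pi_n}(X,X)_t - Q_B^{\pi_n}(X,X)_s$ and the restricted sum on the single interval $\pi_n(s)$, identify it as the symmetric cross term in $a_n = X_{\overline{\pi_n}(s)\wedge t}-X_s$ and $b_n = X_s - X_{\underline{\pi_n}(s)}$, and kill it with Lemma~\ref{4eb}, using the polarization estimate $\bigl\lvert \lVert a+b\rVert^2 - \lVert a\rVert^2 - \lVert b\rVert^2 \bigr\rvert \lesssim \lVert a\rVert\,\lVert b\rVert$ in part~(ii). The only cosmetic differences are that you make the interval-by-interval case analysis explicit, and you route the norm bound in~(i) through the factorization $B = T_B\circ\otimes$ (which is just the universal property of the projective tensor product, not really a consequence of Proposition~\ref{3c}) where the paper bounds $\lVert B(a_n,b_n)\rVert \leq \lVert B\rVert\,\lVert a_n\rVert\,\lVert b_n\rVert$ directly.
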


\begin{proof}
We first show (i).
Choose any two positive numbers $s$ and $t$ satisfying $s \leq t$.
By direct calculation, we see that
\begin{align*}
    Q_B^{\pi_n}(X,X)_t
    - Q_B^{\pi_n}(X,X)_s
    & = 
    \sum_{\lrbrack r,u \rbrack \in \pi_n}
        B(X_{(u \wedge t) \vee s} - X_{(r \wedge t) \vee s}, X_{(u \wedge t) \vee s} - X_{(r \wedge t) \vee s})  \\
    & \quad
        + B(X_{\overline{\pi_n}(s) \wedge t} - X_{s},X_{s} - X_{\underline{\pi_n}(s)})
        + B(X_{s} - X_{\underline{\pi_n}(s)},X_{\overline{\pi_n}(s) \wedge t} - X_{s}).
\end{align*}
This combined with Lemma~\ref{4eb} implies
\eqref{4ed} in the corresponding topology.

Next, we consider (ii).
Transforming the summation as 
\begin{align*}
    & 
        Q^{\pi_n}(X)_t - Q^{\pi_n}(X)_s  \\
    & \qquad = 
        \sum_{\lrbrack r,u \rbrack \in \pi_n}
            \lVert X_{(u \wedge t) \vee s} - X_{(r \wedge t) \vee s} \rVert^2
        - \lVert X_{\overline{\pi_n}(s) \wedge t} - X_{s} \rVert^2
        + \lVert \delta X_t(\pi_n(s)) \rVert^2
        - \lVert \delta X_s(\pi_n(s)) \rVert^2,
\end{align*}
we see that 
\begin{align*}
    & 
    \left\lvert
        Q^{\pi_n}(X)_t - Q^{\pi_n}(X)_s
        - \sum_{\lrbrack r,u \rbrack \in \pi_n}
            \lVert X_{(u \wedge t) \vee s} - X_{(r \wedge t) \vee s} \rVert^2
    \right\rvert  \\
    & \qquad \leq 
    \left\lvert 
        \lVert \delta X_t(\pi_n(s)) \rVert^2
        - \lVert X_{\overline{\pi_n}(s) \wedge t} - X_{s} \rVert^2
        - \lVert \delta X_s(\pi_n(s)) \rVert^2
    \right\rvert 
    \leq 
    2 \lVert X_{\overline{\pi_n}(s) \wedge t} - X_{s} \rVert
        \lVert X_{s} - X_{\underline{\pi_n}(s)} \rVert.
\end{align*}
Since the right-hand side converges to $0$ as $n \to \infty$ by Lemma~\ref{4eb}, 
we obtain \eqref{4ee}.
\end{proof}

\begin{proof}[Proof of Proposition~\ref{4d}]
Let $t \geq s \geq 0$ and
take an arbitrary $z^\ast \in G^\ast$ satisfying
$\lVert z^\ast \rVert \leq 1$.
Then
\begin{equation*}
    \left\lvert
        \left\langle z^\ast,\sum_{\lrbrack r,u \rbrack \in \pi_n}
            B\left( X_{(u \wedge t) \vee s} - X_{(r \wedge t) \vee s},X_{(u \wedge t) \vee s} - X_{(r \wedge t) \vee s} \right)
        \right\rangle
    \right\rvert
    \leq
    \lVert B \rVert
    \sum_{\lrbrack r,u \rbrack \in \pi_n}
        \left\lVert X_{(u \wedge t) \vee s} - X_{(r \wedge t) \vee s} \right\rVert^2,
\end{equation*}
and therefore, by Lemma~\ref{4ec}, we see that
\begin{equation*}
    \lvert \langle z^\ast,Q_B(X,X)_t - Q_B(X,X)_s \rangle \rvert
    \leq \lVert B \rVert (Q(X)_t - Q(X)_s).
\end{equation*}
By taking the supremum over all $z^\ast \in G^\ast$
with $\lVert z^\ast \rVert \leq 1$, we obtain \eqref{4e}.
\end{proof}

\begin{Rem} \label{4f}
The assumption that $Q_B(X,X)$ has finite variation is not used
in the proof of Proposition~\ref{4d}.
Therefore, if $X$ has scalar quadratic variation
and there is a path $Q_B(X,X) \in D(\mathbb{R}_{\geq 0}, E \widehat{\otimes} E)$
satisfying Condition~(i) of Definition~\ref{2b},
it necessarily satisfies \eqref{4e}.
In this case, $Q_B$ automatically has finite variation.
\end{Rem}

\begin{Cor} \label{4g}
Let $\alpha$ be a reasonable crossnorm on $E \otimes E$.
Suppose that $X \in D(\mathbb{R}_{\geq 0},E)$ has 
both scalar and weak $\alpha$-tensor quadratic variations 
along $(\pi_n)$ and $(\pi_n)$ satisfies Condition~(C) for $X$.
Then they satisfy
\begin{equation} \label{4h}
    \left\lVert {^\alpha [X,X]}_t - {^\alpha [X,X]}_s \right\rVert
    \leq
    Q(X)_t - Q(X)_s
\end{equation}
for all $s,t \in \mathbb{R}_{\geq 0}$ with $s \leq t$.
In particular, ${^\alpha [X,X]}$ is absolutely continuous with respect to $Q(X)$.
\end{Cor}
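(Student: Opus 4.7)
The plan is to derive Corollary~\ref{4g} as a direct specialization of Proposition~\ref{4d}. I would take the bounded bilinear map $B$ in that proposition to be the canonical tensor product map $\otimes \colon E \times E \to E \widehat{\otimes}_{\alpha} E$; by definition of $\alpha$ being a reasonable crossnorm, we have $\alpha(x \otimes y) \leq \lVert x \rVert \lVert y \rVert$ for all $x,y \in E$, which gives $\lVert \otimes \rVert_{\mathcal{L}^{(2)}(E,E;E \widehat{\otimes}_{\alpha} E)} \leq 1$. The strong/weak $B$-quadratic variation of $X$ with respect to this $B$ is precisely $^\alpha[X,X]$ by construction.

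Hence, under the hypotheses of the corollary, all assumptions of Proposition~\ref{4d} are satisfied: $X$ has scalar quadratic variation (hence finite 2-variation), it has weak $\otimes$-quadratic variation along $(\pi_n)$, and $(\pi_n)$ satisfies Condition~(C) for $X$. Invoking \eqref{4e} and substituting $\lVert B \rVert \leq 1$ immediately yields the estimate \eqref{4h}.

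For the absolute continuity assertion, I would argue as follows. Given the inequality \eqref{4h}, take any partition $s = r_0 < r_1 < \dots < r_k = t$ of a compact interval $[s,t]$ and sum the telescoping bound to obtain
\begin{equation*}
    \sum_{i=0}^{k-1} \bigl\lVert {^\alpha [X,X]}_{r_{i+1}} - {^\alpha [X,X]}_{r_i} \bigr\rVert
    \leq
    \sum_{i=0}^{k-1} \bigl( Q(X)_{r_{i+1}} - Q(X)_{r_i} \bigr)
    =
    Q(X)_t - Q(X)_s.
\end{equation*}
Taking the supremum over such partitions gives $V({^\alpha [X,X]};[s,t]) \leq Q(X)_t - Q(X)_s$, so the total variation measure of ${^\alpha [X,X]}$ is dominated by $dQ(X)$; this is precisely the absolute continuity of ${^\alpha [X,X]}$ with respect to $Q(X)$.

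Since Proposition~\ref{4d} carries the entire analytic content (through Lemmas~\ref{4eb} and \ref{4ec}), there is essentially no obstacle here: the only substantive observation is that a reasonable crossnorm forces $\lVert \otimes \rVert \leq 1$, which is what allows the constant in \eqref{4e} to disappear in \eqref{4h}. The rest is a short telescoping argument to upgrade the pointwise increment bound into absolute continuity.
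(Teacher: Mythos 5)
Your proposal is correct and matches the paper's intended argument: the paper states Corollary~\ref{4g} without proof precisely because it is the specialization of Proposition~\ref{4d} to $B = \otimes \colon E \times E \to E \widehat{\otimes}_{\alpha} E$, where the crossnorm property gives $\lVert \otimes \rVert \leq 1$ and hence removes the constant in \eqref{4e}. Your telescoping argument for upgrading the increment bound to absolute continuity is the standard (and correct) way to read off the final assertion.
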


\begin{Th} \label{4i}
Suppose that $X \in D(\mathbb{R}_{\geq 0},E))$ 
has weak $B$-quadratic variation and scalar quadratic variation
along $(\pi_n)$ and $(\pi_n)$ satisfies (C) for $X$.
If $G$ has the RNP,
then there is a $q_B \in L^1_{\mathrm{loc}}(Q(X);G)$
such that 
\begin{equation*}
    Q_B(X,X)_t = \int_{\lrbrack 0,t \rbrack} q_B(s) \mathrm{d}Q(X)_s, \qquad \forall t \geq 0,
\end{equation*}
and $\lVert q_B(s) \rVert_E \leq \lVert B \rVert$ for 
$\mathrm{d}Q(X)$-almost every $s$.
Moreover, we have
\begin{equation} \label{4ib}
    V(Q_B(X,X))_t = \int_{\lrbrack 0,t \rbrack} \lVert q_B(s) \rVert \mathrm{d}Q(X)_s.
\end{equation}
\end{Th}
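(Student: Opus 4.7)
The plan is to combine Proposition~\ref{4d} with the vector-valued Radon--Nikodym theorem. The pointwise Lipschitz bound
$\lVert Q_B(X,X)_t - Q_B(X,X)_s \rVert \leq \lVert B \rVert (Q(X)_t - Q(X)_s)$
already does essentially all the work; once it is upgraded to a measure-theoretic absolute continuity statement, the RNP of $G$ immediately gives the density, and the norm bound plus the variation identity fall out of standard properties of the Bochner integral.

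First, I would interpret the two finite-variation paths as vector Stieltjes measures on the Borel sets of $\mathbb{R}_{\geq 0}$: let $\mu_B$ be the unique $G$-valued countably additive measure with $\mu_B(\lrbrack s,t \rbrack) = Q_B(X,X)_t - Q_B(X,X)_s$, and let $\mu$ be the positive Radon measure with $\mu(\lrbrack s,t \rbrack) = Q(X)_t - Q(X)_s$. Applying Proposition~\ref{4d} to each piece of an arbitrary partition of a compact interval $[a,b]$ and taking the supremum over partitions yields
$V(Q_B(X,X))_t - V(Q_B(X,X))_s \leq \lVert B \rVert (Q(X)_t - Q(X)_s)$,
so the variation measure satisfies $\lvert \mu_B \rvert \leq \lVert B \rVert \mu$ as Borel measures on each bounded interval. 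In particular $\mu_B$ is of bounded variation and absolutely continuous with respect to the finite scalar measure $\mu$ on every bounded interval.

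Next, since $G$ has the RNP, the vector-valued Radon--Nikodym theorem (invoked in the form discussed in Appendix~A) produces, for each $T > 0$, a Bochner $\mu$-integrable $G$-valued function $q_B^{(T)}$ on $[0,T]$ representing $\mu_B$ on $[0,T]$; by uniqueness a.e., these patch into a single $q_B \in L^1_{\mathrm{loc}}(Q(X);G)$ with $\mu_B(A) = \int_A q_B \, d\mu$ for every bounded Borel $A$. Specialising $A = \lrbrack 0,t \rbrack$ gives the desired integral representation.

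For the two remaining assertions, I would use the classical fact that when $q_B$ is the Bochner density of a $G$-valued measure with respect to a scalar measure, the variation of the indefinite integral is $\int \lVert q_B \rVert \, d\mu$, proved by the triangle inequality for one direction and a simple-function approximation in $L^1(\mu;G)$ for the reverse. This is exactly identity \eqref{4ib}. Comparing that identity with the measure inequality $\lvert \mu_B \rvert \leq \lVert B \rVert \mu$ established above forces $\lVert q_B(s) \rVert \leq \lVert B \rVert$ for $\mu$-a.e.\ $s$, since otherwise restricting the integration to $\{ \lVert q_B \rVert > \lVert B \rVert \}$ would contradict the bound on $\lvert \mu_B \rvert$. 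There is no serious obstacle here; the only point requiring care is a clean passage from the pointwise Lipschitz estimate on intervals to the inequality $\lvert \mu_B \rvert \leq \lVert B \rVert \mu$ on general Borel sets, which is handled by the partition argument above and the regularity of Radon measures.
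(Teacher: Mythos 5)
Your proposal is correct and follows essentially the same route as the paper: Proposition~\ref{4d} gives the Lipschitz/absolute-continuity estimate, the RNP of $G$ (via the vector Radon--Nikodym theorem of Appendix~A, Propositions~\ref{a1b} and~\ref{a1d}) produces the local density $q_B$, the identity $\lvert f \cdot \mu \rvert = \lVert f \rVert \cdot \mu$ yields \eqref{4ib}, and comparison with \eqref{4e} forces $\lVert q_B \rVert \leq \lVert B \rVert$ almost everywhere. The only difference is that you spell out the passage from the interval estimate to the inequality $\lvert \mu_B \rvert \leq \lVert B \rVert \mu$ on Borel sets, which the paper leaves implicit.
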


\begin{proof}
By Proposition~\ref{4d}, we know that
the path $Q_B(X,X)$ is absolutely continuous 
with respect to $Q(X)$.
Then
we can take a locally Bochner-integrable function
$q_B \colon \mathbb{R}_{\geq 0} \to G$
such that
\begin{equation*}
    Q_B(X,X)_t - Q_B(X,X)_s
    = \int_{\lrbrack s,t \rbrack} q_B(u) \,\mathrm{d}Q(X)_u
\end{equation*}
and 
\begin{equation} \label{4ic}
    \int_s^t \lVert q_B(u) \rVert \mathrm{d}\mathrm{Q}(X)_u
    =
    V(Q_B(X,X),[s,t])
    =
    V(Q_B(X,X))_t - V(Q_B(X,X))_s
\end{equation}
holds for all $s, t \in \mathbb{R}_{\geq 0}$ with $s \leq t$
(see Propositions~\ref{a1b} and~\ref{a1d}).
Equation~\eqref{4ic} directly implies \eqref{4ib}.
Moreover, \eqref{4ic} combined with estimate \eqref{4e}
shows that $\lVert q(u) \rVert \leq \lVert B \rVert$ holds
$\mathrm{d}Q(X)$-almost everywhere.
\end{proof}

\begin{Cor} \label{4j}
Let $\alpha$ be a reasonable crossnorm on $E \otimes E$.
Suppose that $X \in D(\mathbb{R}_{\geq 0},E)$
has weak $\alpha$-tensor quadratic variation and scalar quadratic variation
along $(\pi_n)$, and that $(\pi_n)$ satisfies (C) for $X$.
If $E \widehat{\otimes}_{\alpha} E$ has the RNP,
then there is a $q \in L^1_{\mathrm{loc}}(Q(X);E \widehat{\otimes} E)$
such that 
\begin{equation} \label{4jb}
    {^\alpha [X,X]}_t = \int_{\lrbrack 0,t \rbrack} q(s) \mathrm{d}Q(X)_s, \qquad
    V({^\alpha [X,X]})_t = \int_{\lrbrack 0,t \rbrack} \lVert q(s) \rVert \mathrm{d}Q(X)_s
\end{equation}
for all $t \geq 0$ and 
$\lVert q(s) \rVert_E \leq 1$ holds for $Q(X)$-almost every $s$.
\end{Cor}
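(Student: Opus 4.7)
The plan is to specialize Theorem~\ref{4i} to the canonical bilinear map $B = \otimes \colon E \times E \to E \widehat{\otimes}_{\alpha} E$. Under this identification the weak $B$-quadratic variation $Q_B(X,X)$ is by definition the weak $\alpha$-tensor quadratic variation ${^\alpha [X,X]}$, so the conclusion of Theorem~\ref{4i} would give exactly the integral and total-variation formulae in \eqref{4jb}, together with a pointwise norm bound on the density.

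The verification is short. First I would observe that, by definition of a reasonable crossnorm, $\alpha(x \otimes y) \leq \lVert x \rVert \lVert y \rVert$ for all $x,y \in E$, whence $\otimes$ is bounded bilinear with $\lVert \otimes \rVert \leq 1$. Then I would check that the hypotheses of Theorem~\ref{4i} are met with $G = E \widehat{\otimes}_{\alpha} E$: the path $X$ has both weak $B$-quadratic variation and scalar quadratic variation along $(\pi_n)$, the sequence $(\pi_n)$ satisfies Condition~(C) for $X$, and $G$ has the RNP by assumption.

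Applying Theorem~\ref{4i} then yields a locally $Q(X)$-integrable function $q \colon \mathbb{R}_{\geq 0} \to E \widehat{\otimes}_{\alpha} E$ such that
\[
    {^\alpha [X,X]}_t = \int_{\lrbrack 0,t \rbrack} q(s)\, \mathrm{d}Q(X)_s, \qquad
    V({^\alpha [X,X]})_t = \int_{\lrbrack 0,t \rbrack} \lVert q(s) \rVert\, \mathrm{d}Q(X)_s,
\]
together with the bound $\lVert q(s) \rVert \leq \lVert \otimes \rVert \leq 1$ for $Q(X)$-almost every $s$. Since the argument reduces entirely to invoking an already proved theorem, there is no genuine obstacle; the only point that requires a moment's thought is recognising that reasonableness of the crossnorm is exactly what converts the generic constant $\lVert B \rVert$ of Theorem~\ref{4i} into the explicit bound $1$ in the present corollary.
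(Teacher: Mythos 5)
Your proposal is correct and is exactly the intended argument: the paper gives no separate proof of Corollary~\ref{4j}, treating it as the specialization of Theorem~\ref{4i} to $B = \otimes \colon E \times E \to E \widehat{\otimes}_{\alpha} E$ with $G = E \widehat{\otimes}_{\alpha} E$, where $\lVert \otimes \rVert \leq 1$ because $\alpha$ is a reasonable crossnorm. Your observation that the reasonableness of $\alpha$ is precisely what turns the generic bound $\lVert B \rVert$ into the constant $1$ matches the paper's own use of this fact in Corollary~\ref{4g}.
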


\begin{Rem} \label{4jc}
    For the RNP of the projective tensor product of 
    Banach spaces, see Diestel, Fourie, and Swart~\cite{Diestel_Fourie_Swart_2006}
    and the references therein.
    If $E = L^p(\mu)$ with $1 < p < \infty$ on some measure space $(\Omega,\mathcal{A},\mu)$,
    then one can take a crossnorm $\alpha$ such that
    $L^p(\mu) \widehat{\otimes}_{\alpha} L^p(\mu)$ and
    $L^p(\mu \otimes \mu)$ are isomorphic Banach spaces.
    See Defant and Floret~\cite[Section 7]{Defant_Floret_1993} for details.
    In this case, $L^p(\mu) \widehat{\otimes}_{\alpha} L^p(\mu)$
    is reflexive and it has the RNP.
\end{Rem}

We can improve Corollary~\ref{4j} when the state space $E$ is a Hilbert space.

\begin{Cor} \label{4k}
Let $H$ be a separable Hilbert space
and $X \in D(\mathbb{R}_{\geq 0},H)$.
Suppose that $X$ has projective tensor quadratic variation
along $(\pi_n)$ and that $(\pi_n)$ satisfies (C) for $X$.
Then the density $q \in L^1_{\mathrm{loc}}(Q(X);H)$
in Proposition~\ref{4j} satisfies $\lVert q(s) \rVert = 1$
for $Q(X)$-almost every $s$.
Consequently, $V([X,X]) = Q(X)$. 
\end{Cor}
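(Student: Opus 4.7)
The plan is to combine the trace representation from Proposition~\ref{3g} with the integral representation from Corollary~\ref{4j}, and then to exploit the fact that $\operatorname{tr}_H$ has operator norm exactly $1$ on $H \widehat{\otimes} H$. First I would verify that Corollary~\ref{4j} is applicable: for a separable Hilbert space $H$, the projective tensor product $H \widehat{\otimes} H$ is isometrically isomorphic to the trace class operators on $H$, which is the dual of the separable space of compact operators, hence a separable dual space, and therefore has the RNP. This yields a density $q \in L^1_{\mathrm{loc}}(Q(X); H \widehat{\otimes} H)$ with $\lVert q(s) \rVert \leq 1$ for $Q(X)$-a.e.\ $s$ such that \eqref{4jb} holds.

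Next I would apply the bounded linear functional $\operatorname{tr}_H$ to both sides of the integral representation of $[X,X]_t$. Since bounded linear maps commute with Bochner integrals,
\begin{equation*}
    \operatorname{tr}_H([X,X]_t) = \int_{\lrbrack 0,t \rbrack} \operatorname{tr}_H(q(s)) \, \mathrm{d}Q(X)_s.
\end{equation*}
Proposition~\ref{3g} identifies the left-hand side with $Q(X)_t$, which also equals $\int_{\lrbrack 0,t \rbrack} 1 \, \mathrm{d}Q(X)_s$. The uniqueness of the Radon--Nikodym density therefore forces $\operatorname{tr}_H(q(s)) = 1$ for $Q(X)$-a.e.\ $s$.

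The decisive analytic input is then the operator norm of $\operatorname{tr}_H$ on $H \widehat{\otimes} H$. By the universal property of the projective tensor product, $\lVert \operatorname{tr}_H \rVert$ equals the norm of the bilinear form $\langle \cdot,\cdot \rangle_H \colon H \times H \to \mathbb{R}$, which is exactly $1$ by the Cauchy--Schwarz inequality. Hence, for $Q(X)$-a.e.\ $s$,
\begin{equation*}
    1 = \operatorname{tr}_H(q(s)) \leq \lVert \operatorname{tr}_H \rVert \, \lVert q(s) \rVert \leq \lVert q(s) \rVert \leq 1,
\end{equation*}
forcing $\lVert q(s) \rVert = 1$ almost everywhere. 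Substituting this into the second identity in \eqref{4jb} gives $V([X,X])_t = \int_{\lrbrack 0,t \rbrack} 1 \, \mathrm{d}Q(X)_s = Q(X)_t$, which is the stated consequence.

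I do not anticipate a serious obstacle. The only points requiring some care are the RNP check for $H \widehat{\otimes} H$ (to legitimately invoke Corollary~\ref{4j}) and the identification $\lVert \operatorname{tr}_H \rVert = 1$, both of which follow from standard facts once the universal property of the projective tensor product is recalled.
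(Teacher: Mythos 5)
Your proposal is correct and follows essentially the same route as the paper: establish the RNP of $H \widehat{\otimes} H$ via its realization as a separable dual space, apply $\operatorname{tr}_H$ to the integral representation from Corollary~\ref{4j}, identify the result with $Q(X)$ via Proposition~\ref{3g}, and use contractivity of $\operatorname{tr}_H$ together with $\lVert q \rVert \leq 1$ to force $\lVert q \rVert = 1$ almost everywhere. The only cosmetic difference is that you first pin down $\operatorname{tr}_H(q(s)) = 1$ a.e.\ by uniqueness of the scalar Radon--Nikodym density and then conclude pointwise, whereas the paper integrates the inequality $\lvert \operatorname{tr}_H \circ q \rvert \leq \lVert q \rVert$ directly to sandwich $V([X,X])_t$ between $Q(X)_t$ and itself; both are valid.
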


\begin{proof}
First recall that 
there is an isomorphism
$(H \widehat{\otimes}_{\varepsilon} H)^* \cong H 
\widehat{\otimes} H$,
where $\varepsilon$ denotes the injective tensor norm.
See, for example, Schatten~\cite[Theorem 5.13]{Schattenn_1950} or
Fabian~\cite[Proposition 16.40]{Fabian_Habala_Hajek_Santalucia_Zizler_2011}
for a proof.
Moreover, the separability of $H$ implies
that $H \widehat{\otimes} H$ is also separable.
Therefore, $H \widehat{\otimes} H$ has the RNP.
By Proposition~\ref{4j}, 
there is a function $q \in L^1_{\mathrm{loc}}(Q(X);H)$
satisfying \eqref{4jb} for all $t \geq 0$.
On the other hand, by Proposition~\ref{3g}, we have
\begin{equation*}
    Q(X)_t = \mathop{\mathrm{tr}}\nolimits_H([X,X]_t) 
    = 
    \int_{\lrbrack 0,t \rbrack} \mathop{\mathrm{tr}}\nolimits_H(q(s)) \mathrm{d}Q(X)_s
\end{equation*}
for all $t \geq 0$.
Since the trace functional
$\mathop{\mathrm{tr}}\nolimits_H \colon H \widehat{\otimes} H \to \mathbb{R}$
is contractive, we see that
$\lvert \mathop{\mathrm{tr}}\nolimits_H \circ q \rvert \leq \lVert q \rVert$
holds almost everywhere. Thus, we obtain the inequality
\begin{equation*}
    Q(X)_t \leq V([X,X])_t \leq Q(X)_t
\end{equation*}
for all $t \in \mathbb{R}_{\geq 0}$, which shows the assertion of the corollary.
\end{proof}

\section{\texorpdfstring{$C^1$}{C¹}-transformations}

In this section, we study quadratic variations of
a path of the form $f(t,X_t)$ defined by a c\`{a}dl\`{a}g path $X$ with quadratic variation
and a sufficiently nice function $f$.
First, let us recall classical results in the theory of classical It\^{o}'s stochastic calculus.
If $X$ is a semimartingale and $f$ is of class $C^{1,2}$,
then $f(\cdot,X_{\cdot})$ is still a semimartingale by the It\^{o} formula
and therefore it has quadratic variation.
This result can partially be extended to a $C^1$ function $f$ in the sense
that $f(\cdot,X_{\cdot})$ has quadratic variation
(see, e.g., Meyer~\cite[Theorem~5 in Chapter~VI]{Meyer_1976})
while it is not necessarily a semimartingale.
There are corresponding $C^1$-transformation results
in the It\^{o}--F\"{o}llmer calculus in Euclidean spaces
(see Sondermann~\cite{Sondermann_2006} and Hirai~\cite{Hirai_2019}).
We extend these previous results to infinite-dimensional paths.
As we treat {\cadlag} paths, it is natural to assume
that each path $t \mapsto f(t,x)$ is also {\cadlag}.
In this case, the roles of variables $t$ and $x$ are no longer symmetric,
and we regard $f$ as a {\cadlag} path $t \mapsto f(t,\cdot)$
in a space of functions with appropriate $C^1$-smoothness.

Note that Ananova and Cont~\cite{Ananova_Cont_2017} gives corresponding results
for more general path-dependent functionals $f$ in the case
where $X$ is finite-dimensional and continuous.
Extending our results to such path-dependent functionals is also important,
but beyond the scope of this article and thus not discussed here.

\subsection{Preliminaries}

In this subsection, we introduce some preliminary
concepts that will be used for the main results of this section.
First, we define a family of {\cadlag} paths
of uniformly finite variation.

\begin{Def} \label{5.1ab}
Let $E$ be a Banach space and $\mathcal{F}$ be
a nonempty subset of $D(\mathbb{R}_{\geq 0},E)$.
For each compact interval $[a,b] \subset \mathbb{R}_{\geq 0}$, define
\begin{equation*}
    V(\mathcal{F};[a,b])
    \coloneqq \sup_{f \in \mathcal{F}} V(f;[a,b])
    = \sup_{f \in \mathcal{F}} \sup_{\pi \in \Par([a,b])}
        \sum_{I \in \pi} \lVert \delta f(I) \rVert.
\end{equation*}
We say that $\mathcal{F}$ has \emph{uniformly finite variation}
if $V(\mathcal{F};[a,b]) < \infty$
for all compact intervals $[a,b] \subset \mathbb{R}_{\geq 0}$.
A parametrized family $(f_\lambda)_{\lambda \in \Lambda}$ of elements of 
$D(\mathbb{R}_{\geq 0},E)$ has uniformly finite variation
if the set $\{ f_i \mid i \in I \}$ does.
\end{Def}

By definition,
$\mathcal{F}$ has uniformly finite variation
if and only if $(V(f;I) ; f \in \mathcal{F})$ is 
bounded for all compact intervals $I \subset \mathbb{R}_{\geq 0}$.
We simply write $V(\mathcal{F})_t = V(\mathcal{F};[0,t])$ for $t \geq 0$.
Given a parametrized family $(f_{\lambda})_{\lambda \in \Lambda}$
of {\cadlag} paths,
set $V(f_\lambda; \lambda \in \Lambda)_t = V(\{ f_{\lambda};\lambda \in \Lambda \})_t$.

Next, we introduce a variant of Definition~\ref{2d}
for a family of c\`{a}dl\`{a}g paths.
This condition will be used to formulate the main
results of this section, such as
Theorem~\ref{5.2b} and Corollary~\ref{5.2kb}.
Given a subset $\mathcal{F}$ of $D(\mathbb{R}_{\geq 0},E)$,
define
\begin{gather*}
D(\mathcal{F})
    =
    \{
        s \in [0,\infty \rlbrack \mid \text{$\lVert \Delta f \rVert_{s} > 0 $ holds for some $f \in \mathcal{F}$}
    \},   \\
D_{\varepsilon}(\mathcal{F})
    =
    \{ 
        s \in [0,\infty \rlbrack 
        \mid 
        \text{$\lVert \Delta f \rVert_{s} > \varepsilon $ holds for some $f \in \mathcal{F}$}
    \},    \\
D^{\varepsilon}(\mathcal{F})
    = D(\mathcal{F}) \setminus D_{\varepsilon}(\mathcal{F}).
\end{gather*}
Note that $D_{\varepsilon}(\mathcal{F})$
can be an uncountable set in general, 
but, as a consequence of Proposition~\ref{a2.1d},
it is countable for every $\varepsilon > 0$
provided that $\mathcal{F}$ is equi-right-regular,
as defined in Appendix~B.

\begin{Def} \label{5.1b}
Let $\mathcal{F}$ be a subset of $D(\mathbb{R}_{\geq 0},E)$
and $(\pi_n)$ be a sequence of partitions of $\mathbb{R}_{\geq 0}$.
We say that $(\pi_n)$ satisfies (UC) for $\mathcal{F}$
if it satisfies the following three conditions:
\begin{enumerate}
    \item[(UC1)] For every $\varepsilon$ and every $t > 0$, 
        there exists an $N$ such that 
        $I \cap D_{\varepsilon}(\mathcal{F}) \cap [0,t]$ has 
        at most one element for all $n \geq N$ and $I \in \pi_n$.
    \item[(UC2)] For all $s \in D(\mathcal{F})$ and $t \geq s$,
    the sequence $(\delta f_t(\pi_n(s)))_{n \in \mathbb{N}}$
    converges to $\Delta f(s)$ uniformly in $f \in \mathcal{F}$.
    \item[(UC3)] For all $t \in [0,\infty \rlbrack$
        \begin{equation*}
            \varlimsup_{\varepsilon \downarrow\downarrow 0 } \varlimsup_{n \to \infty}
            \sup_{f \in \mathcal{F}} O_t^+(f-J_{\varepsilon}(f),\pi_n)  = 0.
        \end{equation*}
\end{enumerate}
\end{Def}

Although the assumption (UC) seems strong,
we can always take a sequence of partitions $(\pi_n)$ satisfying (UC)
if $\mathcal{F}$ is equi-right-regular
(see Definition~\ref{a2.1b}),
as in the following example.

\begin{Exm} \label{5.1c}
Let $E$ be a Banach space,
$\mathcal{F}$ be a subset of $D(\mathbb{R}_{\geq 0},E)$,
and $(\pi_n)$ be a sequence of partitions of $\mathbb{R}_{\geq 0}$. 
\begin{enumerate}
    \item If $\mathcal{F}$ is equi-right-regular and 
        $\lvert \pi_n \rvert \to 0$ as $n \to \infty$,
        then $(\pi_n)$ satisfies (UC) for $\mathcal{F}$.
        This is a consequence of Proposition~\ref{a2.1d}.
    \item For each $t \geq 0$ and $n \in \mathbb{N}$, define
        \begin{equation*}
            O_t^{-}(\mathcal{F},\pi_n)
            =
            \sup_{f \in \mathcal{F}} \sup_{\lrbrack r,s \rbrack \in \pi_n} \omega(f,\lbrack r,s \rlbrack \cap [0,t]).
        \end{equation*}
        If $O_t^-(\mathcal{F},\pi_n) \to 0$ as $n \to \infty$ for every $t \geq 0$,
        then $(\pi_n)$ satisfies (UC) for $\mathcal{F}$.
        One can always select such a sequence $(\pi_n)$ whenever $\mathcal{F}$ is equi-right-regular.
\end{enumerate}
\end{Exm}

Let us now introduce the function spaces
used to formulate the $C^1$-transformation formulae
in the next subsection.

\begin{Def} \label{5.1d}
\begin{enumerate}
    \item Let $T$ and $S$ be topological spaces.
        We say that a function $f \colon T \to S$
        belongs to $C_{\mathcal{K}}(T,S)$
        if its restriction $f\vert_K \colon K \to S$
        is continuous for each compact topological subspace $K$ of $T$.
    \item Let $E$ and $F$ be Banach spaces.
        We define $C^1_{\mathcal{K}}(E,F)$ to be the set of 
        all functions $f \in C_{\mathcal{K}}(E,F)$ satisfying the following conditions: 
        \begin{enumerate}
            \item the function $f$ is G\^{a}teaux differentiable;
            \item the restriction of the G\^{a}teaux derivative
                $D_x f\vert_{K} \colon K \to \mathcal{L}(E,F)$
                is continuous for each compact subset $K$ of $E$.
        \end{enumerate}
\end{enumerate}
\end{Def}

Here note that conditions~(a) and (b) themselves
imply that $f \in C_{\mathcal{K}}(E,F)$.
Therefore we can simply restate that $C^1_{\mathcal{K}}(E,F)$
is the set of all G\^{a}teaux differentiable functions
with derivatives in $C_{\mathcal{K}}(E,\mathcal{L}(E,F))$.

As usual, we regard $C_{\mathcal{K}}(E,F)$ as
a locally convex Hausdorff topological vector space
with the topology of uniform convergence on compact subsets.
The topology of $C_{\mathcal{K}}(E,F)$ is
generated by the family of seminorms
$(\lVert \phantom{x} \rVert_{\infty,K})_{K}$ defined by
\begin{equation*}
    \lVert f \rVert_{\infty,K} \coloneqq
    \sup_{x \in K} \lVert f(x) \rVert_F,
\end{equation*}
where $K$ runs over all compact subsets of $E$.
Similarly, we define a topology of $C^1_{\mathcal{K}}(E,F)$
using the seminorms
\begin{equation*}
    \lVert f \rVert_{C^1_{\mathcal{K}},K}\
    \coloneqq \sup_{x \in K} \lVert f(x) \rVert_F + \sup_{x \in K} \lVert Df(x) \rVert_{\mathcal{L}(E,F)}
\end{equation*}
indexed by all compact subsets $K$ of $E$.

Recall Ascoli's theorem,
which characterizes the total boundedness of a subset of $C_{\mathcal{K}}(E,F)$.
Refer to Bourbaki~\cite[X.2.5 Theorem 2]{Bourbaki_1966b}
for a proof.
To state the theorem,
we introduce the notion of uniform equicontinuity.
Let $A$ be a subset of $E$.
We say that $\mathcal{F} \subset C(A,F)$ is \emph{uniformly equicontinuous} if,
for all $\varepsilon > 0$,
there exists a $\delta > 0$ such that
$\lVert g(x) - g(y) \rVert_F < \varepsilon$
holds for all $g \in \mathcal{F}$ and $x,y \in A$ with $\lVert x-y \rVert < \delta$.

\begin{Th}[Ascoli] \label{5.1e}
A subset $\mathcal{F}$ of $C_{\mathcal{K}}(E,F)$
is totally bounded if and only if it satisfies the following conditions:
\begin{enumerate}
    \item the set $\{ f(x) \mid f \in \mathcal{F} \}$ is totally bounded in $F$ for each $x \in E$;
    \item the set $\{ f\vert_K \mid f \in \mathcal{F}\}$ is uniformly equicontinuous for each compact set $K \subset E$.
\end{enumerate}
\end{Th}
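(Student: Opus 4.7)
The plan is to reduce the theorem to the classical Arzel\`{a}--Ascoli theorem on $C(K,F)$, the Banach space of continuous $F$-valued functions on a compact metric space $K$ equipped with the supremum norm. The key preliminary observation is that, by the very definition of the topology on $C_{\mathcal{K}}(E,F)$ via the seminorms $\lVert \phantom{x} \rVert_{\infty,K}$ indexed by compact subsets $K\subset E$, a subset $\mathcal{F}\subset C_{\mathcal{K}}(E,F)$ is totally bounded if and only if for every compact $K\subset E$ the restricted family $\mathcal{F}\vert_K \coloneqq \{f\vert_K : f\in\mathcal{F}\}$ is totally bounded in $C(K,F)$. This reduction turns the statement into a family of classical Ascoli-type statements, one for each compact $K$.

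For the forward implication, suppose $\mathcal{F}$ is totally bounded. For each $x\in E$ the evaluation map $\mathrm{ev}_x\colon C_{\mathcal{K}}(E,F)\to F$, $f\mapsto f(x)$, is continuous, and continuous images of totally bounded sets in a uniform space remain totally bounded; this yields condition~(i). For condition~(ii), fix a compact $K\subset E$; by the reduction $\mathcal{F}\vert_K$ is totally bounded in $C(K,F)$, and the classical Arzel\`{a}--Ascoli theorem then forces uniform equicontinuity of $\mathcal{F}\vert_K$ on $K$.

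For the converse, fix a compact $K\subset E$ and apply the classical Arzel\`{a}--Ascoli theorem to $\mathcal{F}\vert_K\subset C(K,F)$: condition~(i) supplies pointwise total boundedness at each point of $K$, while condition~(ii) supplies equicontinuity (which, on a compact metric space, coincides with uniform equicontinuity). Hence $\mathcal{F}\vert_K$ is totally bounded in $C(K,F)$ for every compact $K$, and the reduction above then gives that $\mathcal{F}$ is totally bounded in $C_{\mathcal{K}}(E,F)$.

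The essential content is the classical Arzel\`{a}--Ascoli theorem for $C(K,F)$, which I would treat as an input. Within the sketched argument the only subtle point is the initial reduction: one verifies that an $\varepsilon$-net for $\mathcal{F}$ with respect to $\lVert \phantom{x} \rVert_{\infty,K}$ restricts to an $\varepsilon$-net in $C(K,F)$, and conversely that any $\varepsilon$-net in $C(K,F)$ can, up to replacing $\varepsilon$ by $2\varepsilon$, be lifted to elements of $\mathcal{F}$ itself. Since the author explicitly defers to Bourbaki for the proof, I would keep this reduction brief and invoke the classical theorem directly rather than reproducing its proof here.
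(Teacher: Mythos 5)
The paper does not actually prove this statement: it is quoted as a classical theorem with a pointer to Bourbaki (X.2.5, Theorem~2), so there is no in-paper argument to compare against. Your reduction is correct and is the standard way to specialize the general Ascoli theorem to this setting: since the union of two compact sets is compact, the seminorms $\lVert \phantom{x} \rVert_{\infty,K}$ form a directed family, so total boundedness of $\mathcal{F}$ in $C_{\mathcal{K}}(E,F)$ is equivalent to total boundedness of each restricted family $\mathcal{F}\vert_K$ in $C(K,F)$, and your remark about taking the $\varepsilon$-net inside $\mathcal{F}$ at the cost of doubling $\varepsilon$ disposes of the lifting issue. One small imprecision in the forward direction: the principle that \emph{continuous} images of totally bounded sets are totally bounded is false in general uniform spaces (one needs \emph{uniform} continuity); it applies here only because $\mathrm{ev}_x$ is a continuous linear map between topological vector spaces, hence automatically uniformly continuous. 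With that wording fixed, the argument is complete, and deferring the essential content to the classical Arzel\`{a}--Ascoli theorem on $C(K,F)$ is consistent with the paper's own treatment.
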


For two Banach spaces $E$ and $F$,
we can consider the space $D(\mathbb{R}_{\geq 0},C_{\mathcal{K}}(E,F))$
of $C_{\mathcal{K}}(E,F)$-valued {\cadlag} paths.
Note that each $f \in D(\mathbb{R}_{\geq 0},C_{\mathcal{K}}(E,F))$
can be regarded as a function on $\mathbb{R}_{\geq 0} \times E$,
so we use two notations, $f_t(x)$ and $f(t,x)$,
for the value of $f$ in $F$.
In the remainder of this subsection,
we study some properties of the path $t \mapsto f(t,X_t)$
defined by an $f \in D(\mathbb{R}_{\geq 0},C_{\mathcal{K}}(E,F))$
and an $X \in D(E,F)$.
These properties will be used in the proof of
the main theorem of Section~5.2.

We begin with the following preliminary theorem
from the theory of general topology.
See, for example, Kelley~\cite[7.6 and 7.10 (e)]{Kelley_1975}
and Bourbaki~\cite[Remark X.2.5]{Bourbaki_1966b}.

\begin{Lem} \label{5.1f}
Let $T$ be a Hausdorff topological space
and $S$ be a metric space (or, more generally, a uniform space).
Define the evaluation map 
$\operatorname{ev} \colon T \times C_{\mathcal{K}}(T,S) \to S$
by the formula $\mathrm{ev}(x,f) = f(x)$.
We regard $C_{\mathcal{K}}(T,S)$ as a topological space 
endowed with the topology of uniform convergence on compact subsets.
\begin{enumerate}
    \item The restriction $\operatorname{ev}\vert_{K \times C_{\mathcal{K}}(T,S)}$
        is jointly continuous for each compact subset $K$ of $T$.
    \item Let $A$ be a compact subset of $C_{\mathcal{K}}(T,S)$.
        Then the restriction $\operatorname{ev}\vert_{K \times A}$
        is jointly continuous relative to the topology of pointwise convergence on $A$
        for each compact subset $K$ of $T$.
\end{enumerate}
\end{Lem}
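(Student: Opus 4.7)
The plan is to exploit the uniform structure on $S$ together with the key observation that any $f \in C_{\mathcal{K}}(T,S)$ restricts to a continuous, hence uniformly continuous, map on any compact Hausdorff subspace of $T$. Both assertions are then verified by a standard two-term decomposition in the uniform structure of $S$.

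For part (i), I would fix $(x_0,f_0) \in K \times C_{\mathcal{K}}(T,S)$ and a symmetric entourage $V$ of $S$, then pick a symmetric entourage $W$ with $W \circ W \subset V$. Uniform continuity of $f_0|_K$ on the compact set $K$ yields an open neighborhood $U \subset K$ of $x_0$ with $(f_0(x),f_0(x_0)) \in W$ for every $x \in U$, while the set
\[
N = \{\, g \in C_{\mathcal{K}}(T,S) \mid (g(y),f_0(y)) \in W \text{ for all } y \in K \,\}
\]
is, by definition of the topology of uniform convergence on compact subsets, an open neighborhood of $f_0$. Decomposing $(f(x),f_0(x_0))$ through the intermediate point $f_0(x)$ then places it in $W \circ W \subset V$ for every $(x,f) \in U \times N$, which is joint continuity at $(x_0,f_0)$.

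For part (ii), the decisive extra ingredient is Ascoli's theorem (an appropriate uniform-space analogue of Theorem~\ref{5.1e}): compactness of $A$ in $C_{\mathcal{K}}(T,S)$ implies that the restricted family $\{\, f|_K \mid f \in A \,\}$ is uniformly equicontinuous on $K$. With $V$, $W$ as above, equicontinuity now furnishes a neighborhood $U \subset K$ of $x_0$ such that $(f(x),f(x_0)) \in W$ for every $f \in A$ and every $x \in U$. The neighborhood of $f_0$ used in (i) can then be replaced by the \emph{pointwise} neighborhood
\[
\widetilde{N} = \{\, f \in A \mid (f(x_0),f_0(x_0)) \in W \,\},
\]
because the defining condition only involves evaluation at the single point $x_0$. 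Decomposing $(f(x),f_0(x_0))$ through the intermediate point $f(x_0)$ now places it in $V$ on $U \times \widetilde{N}$.

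The principal obstacle is clearly (ii), since the pointwise topology on $A$ is strictly coarser than the compact-convergence topology, and the argument of (i) cannot simply be restricted. Ascoli's theorem is precisely what resolves this, converting the uniform-on-$K$ information that governs the compact-convergence topology into the pointwise information at a single point that suffices for the pointwise topology. Once equicontinuity has been secured, the proof of (ii) is fully parallel to that of (i), with the roles of the two factors in the entourage decomposition interchanged.
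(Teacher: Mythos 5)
Your proof is correct and follows the standard argument: the paper does not prove Lemma~\ref{5.1f} itself but refers to Kelley and Bourbaki, where exactly this entourage decomposition appears --- for (i) splitting $(f(x),f_0(x_0))$ through $f_0(x)$ using a compact-convergence neighborhood, and for (ii) splitting through $f(x_0)$ after Ascoli's theorem converts compactness of $A$ into equicontinuity on $K$. (The only cosmetic remark is that in (i) plain continuity of $f_0$ at $x_0$ already suffices; uniform continuity on $K$ is not needed there.)
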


\begin{Lem} \label{5.1g}
If $f \in D(\mathbb{R}_{\geq 0},C_{\mathcal{K}}(E,F))$ and $X \in D(\mathbb{R}_{\geq 0},E)$,
then the path $t \mapsto f(t,X_t)$ is {\cadlag} and the left limits are given by
\begin{equation*}
    \lim_{s \uparrow\uparrow t} f(s,X_s) = f(t-,X_{t-}), \qquad t \geq 0.
\end{equation*}
\end{Lem}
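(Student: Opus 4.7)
The plan is to reduce both the right-continuity statement and the existence/identification of left limits to the joint continuity of the evaluation map on compact subsets, which is supplied by Lemma~\ref{5.1f}(i). The key observation is that, although a {\cadlag} path in $E$ need not have relatively compact range, any convergent sequence together with its limit is a compact subset of $E$, and this is sufficient to activate Lemma~\ref{5.1f}(i).

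First I would establish right-continuity at an arbitrary $t \geq 0$. Pick any sequence $(s_n)$ with $s_n \downarrow\downarrow t$; by right-regularity of $X$ and $f$ we have $X_{s_n} \to X_t$ in $E$ and $f_{s_n} \to f_t$ in $C_{\mathcal{K}}(E,F)$. Set $K = \{X_{s_n} \mid n \in \mathbb{N}\} \cup \{X_t\}$, which is compact in $E$ as a convergent sequence together with its limit. Apply Lemma~\ref{5.1f}(i) with $T = E$ and $S = F$: the evaluation map is jointly continuous on $K \times C_{\mathcal{K}}(E,F)$, so
\begin{equation*}
    f_{s_n}(X_{s_n}) = \mathrm{ev}(X_{s_n},f_{s_n}) \to \mathrm{ev}(X_t,f_t) = f_t(X_t).
\end{equation*}
Since $(s_n)$ was arbitrary, $t \mapsto f(t,X_t)$ is right-continuous at $t$.

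Next I would handle the left limit at any $t > 0$ by an entirely parallel argument. Given $(s_n)$ with $s_n \uparrow\uparrow t$, the hypothesis gives $X_{s_n} \to X_{t-}$ in $E$ and $f_{s_n} \to f_{t-}$ in $C_{\mathcal{K}}(E,F)$. The set $K' = \{X_{s_n} \mid n \in \mathbb{N}\} \cup \{X_{t-}\}$ is again compact, and Lemma~\ref{5.1f}(i) applied to $K' \times C_{\mathcal{K}}(E,F)$ yields $f_{s_n}(X_{s_n}) \to f_{t-}(X_{t-})$. Because the limit $f_{t-}(X_{t-})$ does not depend on the chosen sequence, the left limit $\lim_{s \uparrow\uparrow t} f(s,X_s)$ exists and equals $f(t-,X_{t-})$, which completes the proof.

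I do not anticipate a genuine obstacle here: the substantive content is entirely packaged in Lemma~\ref{5.1f}(i). The only point deserving care is the observation that one cannot simply invoke continuity of $\mathrm{ev}$ on the whole product $E \times C_{\mathcal{K}}(E,F)$ (which fails in general), so one must build the compact set $K$ from the specific approximating sequence; once this is noted, the argument is a clean two-line application of joint continuity in each of the two cases.
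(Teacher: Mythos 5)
Your proof is correct and takes essentially the same route as the paper's: both arguments reduce everything to the joint continuity of $\operatorname{ev}$ on $K \times C_{\mathcal{K}}(E,F)$ for a compact $K \subset E$, supplied by Lemma~\ref{5.1f}(i). The only difference is that the paper chooses a single compact $K$ containing the totally bounded image $X([0,T])$ and treats $f(\cdot,X_{\cdot})$ as a composition, whereas you build the compact set from each approximating sequence together with its limit; this is an immaterial variation.
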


\begin{proof}
To show the assertion, 
it suffices to verify that $t \mapsto f(t,X_t)$ is {\cadlag} on
every compact interval of the form $[0,T]$.
Fix $T > 0$ arbitrarily.
Since the image $X([0,T])$ is totally bounded in $E$,
one can regard the function
$f(\cdot,X_{\cdot}) \colon [0,T] \to F$
as the composition of $(X,f)\vert_{[0,T]}$ and
$\mathrm{ev}\vert_{K \times C_{\mathcal{K}}(E,F)}$
for some compact set $K \subset E$.
The restricted evaluation map
$\mathrm{ev}\vert_{K \times C_{\mathcal{K}}(E,F)}$ is continuous by Lemma~\ref{5.1f},
and therefore 
the composition $f(\cdot,X_{\cdot})\vert_{[0,T]} = \mathrm{ev}\vert_{K \times C_{\mathcal{K}}(E,F)} \circ (X,f)\vert_{[0,T]}$
is {\cadlag}.
Moreover, again by the continuity of the evaluation map,
we obtain
\begin{equation*}
    \lim_{s \uparrow\uparrow t} f(s,X_s)
    =
    \mathrm{ev}\vert_{K \times C_{\mathcal{K}}(E,F)} (X_{s-},f_{s-})
    =
    f(s-,X_{s-}).
\end{equation*}
This completes the proof.
\end{proof}

\begin{Lem} \label{5.1h}
Let $(X,f) \in D(\mathbb{R}_{\geq 0},E \times C_{\mathcal{K}}(E,F))$,
$T > 0$, and $K$ be a compact subset satisfying $X([0,T]) \subset K$.
Assume that a sequence of partitions $(\pi_n)$ approximate
$X$ from the left on $[0,T]$ and so does $f(\cdot,x)$ for all $x \in K$.
Then $(\pi_n)$ approximate $f(\cdot,X_{\cdot})$ from the left on $[0,T]$.
\end{Lem}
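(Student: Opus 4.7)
The plan is to fix $t \in (0, T]$, write $s_n = \underline{\pi_n}(t)$, $x_n = X_{s_n}$, and $x_\infty = X_{t-}$, and prove $f(s_n, x_n) \to f(t-, x_\infty)$. By the left-approximation of $X$ we have $x_n \to x_\infty$, and since $K$ is closed we also have $x_\infty \in K$. The triangle inequality in $F$ gives
\[
    \lVert f(s_n, x_n) - f(t-, x_\infty) \rVert
    \le \sup_{x \in K} \lVert f(s_n, x) - f(t-, x) \rVert
       + \lVert f(t-, x_n) - f(t-, x_\infty) \rVert.
\]
The second summand tends to $0$ because $f_{t-} \in C_{\mathcal{K}}(E, F)$ is continuous on the compact set $K$ and $x_n \to x_\infty$. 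The argument therefore reduces to showing that $f_{s_n} \to f_{t-}$ uniformly on $K$.

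Pointwise convergence $f(s_n, x) \to f(t-, x)$ on $K$ is supplied by the hypothesis that $(\pi_n)$ left-approximates $f(\cdot, x)$ for each $x \in K$. To upgrade it to uniform convergence on $K$, I would first show that $\{f_s : s \in [0, T]\}$ is totally bounded in $C_{\mathcal{K}}(E, F)$ and then invoke Ascoli's Theorem~\ref{5.1e} to deduce uniform equicontinuity of $\{f_s\vert_K : s \in [0, T]\}$. Total boundedness is a general property of a {\cadlag} path with values in $C_{\mathcal{K}}(E, F)$: for every compact $K' \subset E$ and every $\varepsilon > 0$, right-continuity and existence of left limits of $f$ in $C_{\mathcal{K}}(E, F)$ give, for each $t \in [0, T]$, an open neighborhood of $t$ on which the seminorm $\sup_{x \in K'}\lVert f_s(x) - f_t(x) \rVert$ is at most $\varepsilon$ to the right and $\sup_{x \in K'}\lVert f_s(x) - f_{t-}(x) \rVert$ is at most $\varepsilon$ to the left; a finite subcover of $[0, T]$ then produces finitely many centers covering $\{f_s : s \in [0, T]\}$ within $\varepsilon$ in $\lVert \, \cdot \, \rVert_{\infty, K'}$.

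With equicontinuity of $\{f_{s_n}\vert_K\}_n$ on $K$ in hand, a routine finite-cover argument converts the pointwise convergence at each $x \in K$ into uniform convergence on $K$: given $\varepsilon > 0$, use equicontinuity to pick $\delta > 0$, cover $K$ by finitely many $\delta$-balls centered at points $y_1, \dots, y_m \in K$, and apply pointwise convergence at each $y_j$. This yields $\sup_{x \in K}\lVert f_{s_n}(x) - f_{t-}(x) \rVert \to 0$ and finishes the proof. The principal obstacle is the total-boundedness input into Ascoli; once this is established the rest reduces to the triangle inequality and the continuity furnished by Lemma~\ref{5.1g}.
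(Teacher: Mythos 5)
Your proof is correct, and it reaches the conclusion by a genuinely more hands-on route than the paper. The paper's proof is short: it takes a compact set $C \supset f([0,T])$ in $C_{\mathcal{K}}(E,F)$ (asserted without justification), pushes it forward to a compact subset of $C_{\mathcal{K}}(K,F)$ via the restriction map, and then applies Lemma~\ref{5.1f}~(ii) — joint continuity of the evaluation map on $K \times C\vert_K$ relative to the topology of \emph{pointwise} convergence — so that the hypotheses ($X_{\underline{\pi_n}(t)} \to X_{t-}$ in $K$ and $f(\underline{\pi_n}(t),x) \to f(t-,x)$ for each $x \in K$) immediately yield the claim. You instead bypass Lemma~\ref{5.1f} entirely: you split via the triangle inequality into a uniform-on-$K$ term and a fixed-time term, prove total boundedness of $\{f_s : s \in [0,T]\}$ directly from the c\`adl\`ag property, extract uniform equicontinuity on $K$ from Ascoli (Theorem~\ref{5.1e}), and upgrade the hypothesized pointwise convergence to uniform convergence on $K$ by the standard $3\varepsilon$ finite-cover argument. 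In effect you have re-proved the relevant instance of Lemma~\ref{5.1f}~(ii) (whose standard proof runs through exactly this equicontinuity mechanism), at the cost of more length but with the benefit of making explicit the one fact the paper leaves unproved, namely the relative compactness of the image of a c\`adl\`ag path on $[0,T]$. All the small verifications you need are in place: $X_{\underline{\pi_n}(t)} \in K$ since $\underline{\pi_n}(t) \le t \le T$, and $X_{t-} \in K$ since $K$ is closed. The only cosmetic slip is the closing appeal to Lemma~\ref{5.1g}; what your second summand actually uses is just that $f_{t-} \in C_{\mathcal{K}}(E,F)$ is continuous (hence uniformly continuous) on the compact set $K$, which is part of the definition of the path space and needs no separate lemma.
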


\begin{proof}
First, note that the restriction map
$C_{\mathcal{K}}(E,F) \to C_{\mathcal{K}}(K,F)$
is continuous because $K$ is compact.
We take a compact set $C \subset C_{\mathcal{K}}(E,F)$
that includes the image $f([0,T])$.
Then $C\vert_K \coloneqq \{ f\vert_K \mid f \in C \}$
is compact in $C_{\mathcal{K}}(K,F)$
because it is the image of compact set $C$ by the restriction map
$C_{\mathcal{K}}(E,F) \to C_{\mathcal{K}}(K,F)$.
Therefore $\mathrm{ev}\vert_{K \times C\vert_K}$
is continuous relative to the topology of pointwise convergence
on $C_{\mathcal{K}}(K,F)$ by Lemma~\ref{5.1f}.
This continuity combined with the assumptions on
$(\pi_n)$, $X$, and $f$ implies that
\begin{align*}
    \lim_{n \to \infty} f(\underline{\pi_n}(s),X_{\underline{\pi_n}(s)})
    =
    \mathrm{ev}\vert_{K \times C\vert_K} (X_{s-},f_{s-}\vert_K) 
    =
    f(s-,X_{s-}).
\end{align*}
This shows the assertion. 
\end{proof}

\subsection{Main results of the section}

In this subsection, 
let $E$, $E_1$, $F$, $F_1$, and $G$ be Banach spaces.
Given $B \in \mathcal{L}^{(2)}(E,E;E_1)$ and $B' \in \mathcal{L}^{(2)}(F,F;F_1)$,
we identify $B$ and $B'$ with elements of $\mathcal{L}(E \widehat{\otimes} E,E_1)$
and $\mathcal{L}(F \widehat{\otimes} F,F_1)$, respectively, by canonical isomorphisms.
We also assume that there is a given sequence of partitions of $\mathbb{R}_{\geq 0}$,
denoted by $(\pi_n)$.

Let us now give the first $C^1$-transformation formula.
The following theorem, which seems somewhat complicated, 
provides a sufficient condition 
for the existence of $B'$-quadratic variation
of a path $f(\cdot,X_{\cdot})$.
It will later be used to derive the $C^1$-transformation formula
for tensor quadratic variations.

\begin{Th} \label{5.2b}
Let $X \colon \mathbb{R}_{\geq 0} \to E$ be a 
c\`{a}dl\`{a}g path that has weak $B$-quadratic variation
and finite 2-variation along $(\pi_n)$,
and let $f \colon \mathbb{R}_{\geq 0} \to C^1_{\mathcal{K}}(E,F)$
a {\cadlag} path such that the family
$(f(\cdot,x); x \in K)$ has uniformly finite variation for every
compact set $K \subset E$.
Assume that there is a {\cadlag} function
$\Phi_f \colon \mathbb{R}_{\geq 0} \to C_{\mathcal{K}}(E,\mathcal{L}(E_1,F_1))$
that commutes the following diagram for all $(t,x) \in \mathbb{R}_{\geq 0} \times E$.
\begin{equation*}
    \begin{tikzcd}[column sep=large]
        E \widehat{\otimes} E \arrow[r,"{D_xf(t,x)}^{\otimes 2}"] \arrow[d,"B"']
        & F \widehat{\otimes} F \arrow[d,"B'"] \\
        E_1 \arrow[r,"{\Phi_f(t,x)}"']
        & F_1
    \end{tikzcd}
\end{equation*}
Let $T > 0$ and suppose that there is a compact convex set $K \subset E$
satisfying the following conditions:
\begin{enumerate}
    \item the image $X([0,T])$ is included in $K$;
    \item the sequence $(\pi_n)$ approximates $(X,f(\cdot,x),\Phi_f(\cdot,x))$ from the left for all $x \in K$;
    \item the sequence $(\pi_n)$ satisfies (UC) for the family
        of $E \times F$-valued {\cadlag} paths $(X,f_{\cdot}(x))_{x \in K}$.
\end{enumerate}
Under these assumptions, the path $[0,T] \ni t \mapsto f(t,X_t) \in F$ has
the weak $B$-quadratic variation
\begin{equation} \label{5.2c}
    Q_{B'}(f(\cdot,X_{\cdot}),f(\cdot,X_{\cdot}))_t
    =
    \int_0^t
            \Phi_f (s-,X_{s-})
            \mathrm{d}Q_B(X,X)^{\mathrm{c}}_s
    + \sum_{0 < s \leq t} B'(\Delta f(s,X_s))^{\otimes 2}
\end{equation}
for $t \in [0,T]$.
If we assume, moreover, that $X$ has strong $B$-quadratic variation,
then the path given by \eqref{5.2c} is
the strong $B'$-quadratic variation of $f(\cdot,X_{\cdot})$. 
\end{Th}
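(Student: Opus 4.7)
The plan is to compute the limit of the discrete $B'$-quadratic variation
\[
    Q^{\pi_n}_{B'}(Y, Y)_t = \sum_{I \in \pi_n} B'(\delta Y_t, \delta Y_t)(I), \quad Y \coloneqq f(\cdot, X_{\cdot}),
\]
by decomposing each increment on $I = \lrbrack r, u \rbrack \in \pi_n$ as
\[
    \delta Y_t(I) = D_x f(r \wedge t, X_{r \wedge t}) \delta X_t(I) + \rho_{\mathrm{t}}(I) + \rho_{\mathrm{s}}(I),
\]
where $\rho_{\mathrm{t}}(I) = f(u \wedge t, X_{u \wedge t}) - f(r \wedge t, X_{u \wedge t})$ captures time variation at the fixed spatial point $X_{u \wedge t}$ and $\rho_{\mathrm{s}}(I)$ is the G\^ateaux--Taylor residual
\[
    \rho_{\mathrm{s}}(I) = \int_0^1 \bigl[D_x f(r \wedge t, X_{r \wedge t} + \theta \delta X_t(I)) - D_x f(r \wedge t, X_{r \wedge t})\bigr] \delta X_t(I) \, \mathrm{d}\theta.
\]
Bilinearly expanding $B'(\delta Y_t, \delta Y_t)(I)$ separates a principal quadratic term from cross and residual contributions.

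For the principal term, the commuting diagram yields
\[
    B'\bigl(D_x f(r \wedge t, X_{r \wedge t}) \delta X_t(I),\, D_x f(r \wedge t, X_{r \wedge t}) \delta X_t(I)\bigr) = \Phi_f(r \wedge t, X_{r \wedge t}) B(\delta X_t, \delta X_t)(I).
\]
By Lemma~\ref{5.1g} the path $s \mapsto \Phi_f(s, X_s)$ is {\cadlag} with values in $\mathcal{L}(E_1, F_1)$, and by Lemma~\ref{5.1h} combined with hypothesis~(ii) the sequence $(\pi_n)$ left-approximates it. Since (UC) for the family $(X, f_{\cdot}(x))_{x \in K}$ restricts to Condition~(C) for $X$, Lemma~\ref{2g} applies and gives
\[
    \sum_{I \in \pi_n} \Phi_f(r \wedge t, X_{r \wedge t}) B(\delta X_t, \delta X_t)(I) \longrightarrow \int_{\lrbrack 0, t \rbrack} \Phi_f(s-, X_{s-}) \, \mathrm{d}Q_B(X, X)_s
\]
in the weak (resp. norm) topology. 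Splitting $Q_B(X, X) = Q_B(X, X)^{\mathrm{c}} + \sum \Delta Q_B(X, X)$ and using $\Delta Q_B(X, X)_s = B(\Delta X_s, \Delta X_s)$ expresses this limit as $\int_0^t \Phi_f(s-, X_{s-}) \, \mathrm{d}Q_B(X, X)^{\mathrm{c}}_s + \sum_{0 < s \leq t} B'(D_x f(s-, X_{s-}) \Delta X_s)^{\otimes 2}$.

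To control the residuals, I would use that $\{D_x f(t, \cdot)\vert_K : t \in [0, T]\}$ is a compact subset of $C_{\mathcal{K}}(K, \mathcal{L}(E, F))$ (as the continuous image of the compact interval $[0, T]$ through the {\cadlag} path $f$), hence uniformly equicontinuous on $K$ by Ascoli (Theorem~\ref{5.1e}). This produces a modulus $\omega$ with $\omega(0+) = 0$ and $\lVert \rho_{\mathrm{s}}(I) \rVert \leq \omega(\lVert \delta X_t(I) \rVert) \lVert \delta X_t(I) \rVert$. The uniform finite variation assumption gives $\sum_I \lVert \rho_{\mathrm{t}}(I) \rVert \leq V(f(\cdot, x); x \in K)_T$. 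Applying F\"ollmer's truncation keyed on (UC1), I would split $\pi_n \cap [0, t]$ into intervals containing a big jump of $\{X\} \cup \{f(\cdot, x) : x \in K\}$ at scale $\varepsilon$ and those that do not. On the small-jump-free intervals, (UC2)--(UC3) force both $\max_I \lVert \delta X_t(I) \rVert$ and $\max_I \lVert \rho_{\mathrm{t}}(I) \rVert$ to vanish after $n \to \infty$ and $\varepsilon \downarrow 0$, and a Cauchy--Schwarz argument against the bounded $2$-variation of $X$ then drives both the summed squared residuals and the cross terms to zero. On a big-jump interval $I_n(s)$, (UC2) gives $\delta Y_t(I_n(s)) \to \Delta Y_s = \Delta f(s, X_s)$, so its contribution tends to $B'(\Delta Y_s)^{\otimes 2}$. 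The main obstacle will be the final bookkeeping: the jump mass $B'(D_x f(s-, X_{s-}) \Delta X_s)^{\otimes 2}$ extracted from the principal limit does not agree with the desired jump mass $B'(\Delta Y_s)^{\otimes 2}$, and one must verify that the residual and cross-term contributions on big-jump intervals precisely supply the difference, thereby yielding both the continuous integral plus jump sum in \eqref{5.2c} and the jump identity $\Delta Q_{B'}(Y, Y)_s = B'(\Delta Y_s)^{\otimes 2}$ required by Definition~\ref{2b}.
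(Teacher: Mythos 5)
Your proposal is correct and follows essentially the same route as the paper: the same first-order Taylor decomposition into a principal term $D_xf(r\wedge t,X_{r\wedge t})\delta X_t(I)$, a time-variation term evaluated at the right spatial endpoint, and a G\^ateaux remainder; the same use of the commuting diagram together with Lemmas~\ref{5.1g}, \ref{5.1h}, and \ref{2g} for the principal limit; and the same $\varepsilon$-truncation into big-jump and small-jump intervals controlled by (UC), Ascoli equicontinuity, uniform finite variation, and finite 2-variation. The ``bookkeeping'' you flag as the remaining obstacle is resolved exactly as you suggest --- the paper rewrites the target as the full $\mathrm{d}Q_B(X,X)$-integral plus the true jump masses $B'(\Delta f(s,X_s))^{\otimes 2}$ minus the principal-term jump masses $\Phi_f(s-,X_{s-})B(\Delta X_s^{\otimes 2})$ (its equation~(5.2e)), and on big-jump intervals it keeps the undecomposed term $B'(\delta f(\cdot,X_\cdot)_t^{\otimes 2})$, which converges to the true jump by (UC2), just as you observe.
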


\begin{Lem} \label{5.2d}
Under the assumptions of Theorem~\ref{5.2b},
$(\pi_n)$ satisfies (C1) and (C2) of Definition~\ref{2d} for $f(\cdot,X_{\cdot})$ on $[0,T]$.
\end{Lem}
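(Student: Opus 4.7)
The plan is to exploit two structural inputs. First, as in the proof of Lemma~\ref{5.1h}, the path $f$ being {\cadlag} into $C_{\mathcal{K}}(E,F)$ allows us to enclose the images $\{f(t) : t \in [0,T]\}$ and $\{f(t-) : t \in (0,T]\}$ in a single compact subset $C \subset C_{\mathcal{K}}(E,F)$; by Ascoli's theorem (Theorem~\ref{5.1e}) the restrictions $\{g\vert_K : g \in C\}$ are then uniformly equicontinuous on the compact convex set $K$. Second, by assumption~(iii), $(\pi_n)$ satisfies (UC) for the family $(X, f_\cdot(x))_{x \in K}$ of $E \times F$-valued {\cadlag} paths, and by~(ii) combined with Lemma~\ref{5.1h} it also approximates $f(\cdot,X_\cdot)$ from the left on $[0,T]$.

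To verify (C1), fix $\varepsilon > 0$ and $t > 0$. For each $s \in (0, t \wedge T]$ both $X_s$ and $X_{s-}$ lie in $K$, and we decompose
\begin{equation*}
    \Delta f(s,X_s) = (f(s,X_s) - f(s,X_{s-})) + (f(s,X_{s-}) - f(s-,X_{s-})).
\end{equation*}
Uniform equicontinuity of $C\vert_K$ supplies $\delta > 0$ so that $\lVert g(x) - g(y) \rVert_F \leq \varepsilon/2$ for all $g \in C$ and all $x,y \in K$ with $\lVert x-y \rVert \leq \delta$. If $\lVert \Delta f(s,X_s) \rVert > \varepsilon$, then either the first summand exceeds $\varepsilon/2$ (forcing $\lVert \Delta X_s \rVert > \delta$) or the second exceeds $\varepsilon/2$ (forcing a jump of $f(\cdot,X_{s-})$ at $s$ larger than $\varepsilon/2$, with $X_{s-} \in K$). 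In either case $s \in D_\eta((X, f_\cdot(x))_{x \in K})$ for $\eta = \min(\delta, \varepsilon/2)$, so (C1) for $f(\cdot \wedge T, X_{\cdot \wedge T})$ transfers directly from (UC1) for the family.

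For (C2), fix $s \in D(f(\cdot,X_\cdot)) \cap (0,T]$ and $t \in [s,\infty\rlbrack$, and write $a_n = \underline{\pi_n}(s)$, $b_n = \overline{\pi_n}(s)$, $\tau = t \wedge T$. The same decomposition, combined with continuity of $f(s,\cdot)$ on $K$, implies $s \in D((X, f_\cdot(x))_{x \in K})$. We rearrange
\begin{equation*}
    \delta f(\cdot \wedge T, X_{\cdot \wedge T})_t(\pi_n(s)) - \Delta f(s,X_s) = [f(b_n \wedge \tau, X_{b_n \wedge \tau}) - f(s,X_s)] - [f(a_n, X_{a_n}) - f(s-,X_{s-})].
\end{equation*}
The second bracket tends to zero by Lemma~\ref{5.1h}. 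For the first, (UC2) yields $X_{b_n \wedge \tau} - X_{a_n} \to \Delta X_s$ and $f(b_n \wedge \tau, x) - f(a_n, x) \to \Delta f(s,x)$ uniformly in $x \in K$; combined with $X_{a_n} \to X_{s-}$ and $f(a_n, \cdot) \to f(s-, \cdot)$ uniformly on $K$ (the latter from the {\cadlag} property of $f$ in $C_{\mathcal{K}}(E,F)$), these promote to $X_{b_n \wedge \tau} \to X_s$ and $f(b_n \wedge \tau, \cdot) \to f(s, \cdot)$ uniformly on $K$. Inserting the intermediate term $f(b_n \wedge \tau, X_s)$ then splits the first bracket into a piece controlled by the equicontinuity of $C\vert_K$ and a piece controlled by the uniform convergence in $x$.

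The principal obstacle is that the right endpoint $b_n = \overline{\pi_n}(s)$ need not converge to $s$ at all (compare the case analysis in Lemma~\ref{4eb}), so pointwise continuity of $X$ or of $f(t,\cdot)$ cannot simply be invoked at $b_n \wedge \tau$. The argument circumvents this by extracting the convergences $X_{b_n \wedge \tau} \to X_s$ and $f(b_n \wedge \tau, \cdot) \to f(s, \cdot)$ from the uniformity in (UC2) rather than from any continuity of the underlying time variable, and by absorbing the discrepancy in the spatial argument through Ascoli's uniform equicontinuity.
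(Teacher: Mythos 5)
Your proof is correct, and it follows the same overall skeleton as the paper's: establish (C1) by showing the inclusion of jump sets $D_{\varepsilon}(f(\cdot,X_{\cdot})) \subset D_{\eta}\bigl((X,f(\cdot,x));x\in K\bigr)$ so that (UC1) transfers, and establish (C2) by splitting the increment over $\pi_n(s)$ at its endpoints and combining left-approximation, (UC2), and continuity of the evaluation map. The genuine difference lies in how the key estimate for (C1) is produced. The paper uses the mean value inequality for the G\^{a}teaux derivative to get
$\lVert \Delta f(s,X_s)\rVert \le \lVert \Delta f_s\rVert_{\infty,K} + \sup_{(r,x)\in[0,T]\times K}\lVert D_xf(r,x)\rVert\,\lVert\Delta X_s\rVert$,
which is where the convexity of $K$ and the $C^1_{\mathcal{K}}$ regularity enter, and which yields the quantitative inclusion $D_\varepsilon(f(\cdot,X_\cdot))\subset D_{C^{-1}\varepsilon}$. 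You instead invoke Ascoli's theorem to obtain uniform equicontinuity of the relatively compact family $\{f(t)\vert_K\}$, giving a non-quantitative modulus $\eta=\min(\delta,\varepsilon/2)$; this avoids the derivative and the convexity of $K$ entirely and would survive in a merely $C^0$ setting. For (C2) the two arguments are essentially the same fact in different clothing: the paper converges the endpoint values pointwise in $x$ and then appeals to the joint continuity of $\mathrm{ev}$ on $K\times A$ for the topology of pointwise convergence on a compact $A$ (Lemma~\ref{5.1f}), whereas you upgrade the pointwise-in-$x$ convergences to uniform-on-$K$ ones and perform the two-term split with the intermediate point $f(b_n\wedge\tau,X_s)$ by hand. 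One small caveat: your assertion that $f(a_n,\cdot)\to f(s-,\cdot)$ uniformly on $K$ follows ``from the {\cadlag} property of $f$'' is not quite the right attribution, since $\underline{\pi_n}(s)$ need not converge to $s$; the correct justification is the pointwise-in-$x$ convergence supplied by assumption~(ii) combined with the uniform equicontinuity you already extracted via Ascoli. As you set up that equicontinuity at the outset, this is a presentational slip rather than a gap.
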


\begin{proof}
Assume that $T > 0$ and a compact set $K \subset E$ satisfies (i)--(iii)
of Theorem~\ref{5.2b}.
First, we check the following estimate
\begin{align*}
    \lVert \Delta f(s,X_s) \rVert
    & \leq
    \sup_{x \in K} \lVert f(s,x) -f(s-,x) \rVert
    +
    \int_0^1 \lVert Df(s-,X_{s-} + \theta \Delta X_s)\Delta X_s \rVert \mathrm{d}\theta   \\
    & \leq
    \lVert \Delta f_s \rVert_{\infty,K} + \sup_{(r,x) \in [0,T] \times K} \lVert Df(r,x) \rVert \lVert \Delta X_s \rVert.
\end{align*}
Then, by setting $C = 1 \vee \sup_{(r,x) \in [0,T] \times K} \lVert D_x f(r,x) \rVert$,
we have 
\begin{equation*}
    \lVert \Delta f(s,X_s) \rVert
    \leq
    C (\lVert \Delta X_s \rVert + \lVert \Delta f(s) \rVert_{\infty,K}).
\end{equation*}
This shows
$
    D_{\varepsilon}(f(\cdot,X_{\cdot})) 
    \subset 
    D_{C^{-1}\varepsilon}((X,f(\cdot,x));x \in K)
$, which implies Condition (C1).

To show (C2), let $s \in D(f(\cdot,X_{\cdot})) \cap \lrbrack 0, T \rbrack$ and $t \in [s,T]$.
The discussion in the previous paragraph implies
$
    D(f(\cdot,X_{\cdot}))
    \subset
    D((X,f),\lVert \phantom{x} \rVert + \lVert \phantom{x} \rVert_{\infty,K})
$. Therefore, by assumption,
\begin{align*}
    \lim_{n \to \infty} X(\overline{\pi_n}(s) \wedge t) & = X_s,
    & \lim_{n \to \infty} X(\underline{\pi_n}(s) \wedge t) & = X_{s-},  \\
    \lim_{n \to \infty} f(\overline{\pi_n}(s) \wedge t,x) & = f(s,x),
    & \lim_{n \to \infty} f(\underline{\pi_n}(s),x) & = f(s-,x)
\end{align*}
holds for all $x \in K$.
This combined with Lemma~\ref{5.1g} implies
\begin{align*}
    \lim_{n \to \infty} 
        \{ 
            f(\overline{\pi_n}(s) \wedge t, X_{\overline{\pi_n}(s) \wedge t}) 
            -f(\underline{\pi_n}(s) \wedge t, X_{\underline{\pi_n}(s) \wedge t})
        \} 
    & =
    f(s,X_s) - f(s-,X_{s-}),
\end{align*}
which completes the proof.
\end{proof}

\begin{proof}[Proof of Theorem~\ref{5.2b}]
Let $K$ be a compact convex set satisfying (i)--(iii) of the theorem.
For convenience, let
\begin{equation*}
    \mathcal{F}_K = \{ f(\cdot,x) \mid x \in K\} \subset D(\mathbb{R}_{\geq 0},F),  \qquad
    \mathcal{F}_{X,K} = \{ (X,f(\cdot,x)) \mid x \in K \} \subset D(\mathbb{R}_{\geq 0},E \times F).
\end{equation*}

\emph{Step 1: Convergence of jumps.}
First, we check the absolute convergence of the jump part
of \eqref{5.2c}.
Observing the estimate
\begin{align*}
    \sum_{0 < s \leq T} \lVert \Delta f(s,X_s)^{\otimes 2} \rVert
    & \leq 
    2 \sup_{(s,x) \in [0,T] \times K} \lVert D_x f(s,x) \rVert^2
        \sum_{0 < s \leq T} \lVert \Delta X_s \rVert^2
    + 2 \sup_{(s,x) \in [0,T] \times K} \lVert f(s,x) \rVert
        V(\mathcal{F}_K)_t,
\end{align*}
we see that $B(\Delta f(\cdot,X_{\cdot})^{\otimes 2})$
is absolutely summable on $[0,T]$.
Note that
\begin{gather*}
    \sup_{(s,x) \in [0,t] \times K} \lVert f(s,x) \rVert
    =
    \sup_{s \in [0,t]} \lVert f_s \rVert_{\infty,K} < \infty,   \\
    \sup_{(s,x) \in [0,t] \times K} \lVert D_x f(s,x) \rVert
    =
    \sup_{s \in [0,t]} \lVert D_x f(s,\cdot) \rVert_{\infty,K}
    <
    \infty
\end{gather*}
because $f$ is a $C^1_{\mathcal{K}}(E,F)$-valued {\cadlag} path.
Moreover, we have
\begin{equation*} 
    \sum_{0 < s \leq t} 
        \left\lVert 
            \Phi_f(s-,X_{s-}) B(\Delta X_s,\Delta X_s)
        \right\rVert 
    \leq 
    \lVert B' \rVert \sup_{(s,x) \in [0,t] \times K} \lVert D_x f(s,x) \rVert^2
        \sum_{0 < s \leq t} \lVert \Delta X_s \rVert^2 
    < 
    \infty.
\end{equation*}
Therefore, Equation~\eqref{5.2c} is equivalent to 
\begin{align} \label{5.2e}
    & 
    Q_{B'}(f_{\cdot}(X_\cdot),f_{\cdot}(X_\cdot))_t  \\
    & \qquad =
    \int_0^t
        \Phi_f(s-,X_{s-})\mathrm{d}Q_B(X,X)_s
    + \sum_{0 < s \leq t} B'(\Delta f(s,X_s)^{\otimes 2})
    - \sum_{0 < s \leq t} \Phi_f(s-,X_{s-})B(\Delta X_s,\Delta X_s).   \notag
\end{align}
We shall show \eqref{5.2e} instead of \eqref{5.2c} in the rest of this proof.

\emph{Step 2: The Taylor expansion.}
Let $I = \lrbrack r,s \rbrack \in \pi_n$.
Then by the first order Taylor expansion, we obtain
\begin{align} \label{5.2f}
    \delta f(\cdot,X_{\cdot})_t(I) 
    & =
    f(s \wedge t, X_{s \wedge t}) - f(r \wedge t,X_{s \wedge t})
    + D_x f(\cdot,X_{\cdot})\delta X_t(I)
    + R_t(I),
\end{align}
where $R_t(I)$ is defined by
\begin{equation*}
    R_t(I)
    =
    \int_{[0,1]} 
        \{ D_x f(r \wedge t,X_{r \wedge t} + \theta \delta X_t(I)) - D_xf(r \wedge t,X_{r \wedge t}) \} \delta X_t(I)
    \mathrm{d}\theta.
\end{equation*}
For notational convenience, let 
\begin{equation*}
    \delta'_{f,X}(I) 
    = \mathrm{ev}(\delta f_t(I),X_{s \wedge t})
    = f(s \wedge t, X_{s \wedge t}) - f(r \wedge t,X_{s \wedge t}).
\end{equation*}
By \eqref{5.2f} and bilinearity, we see that
\begin{align} \label{5.2g}
    B'(\delta f(\cdot,X_{\cdot})_t^{\otimes 2})  
    =
    B'({\delta'}_{f,X}^{\otimes 2})(I) 
    + \Phi_f(\cdot,X_{\cdot})B(\delta X_t^{\otimes 2})(I)
    + B'(R_t^{\otimes 2})(I)
    + T^1_t(I) 
    + T^2_t(I)
    + T^3_t(I),
\end{align}
where
\begin{align*}
    T^1_t(I)
    & =
    B'\bigl[
        \delta'_{f,X}(I) \otimes D_x f(\cdot,X_\cdot) \delta X_t(I)
        + D_x f(\cdot,X_\cdot) \delta X_t(I) \otimes \delta'_{f,X}(I)
    \bigr]   \\
    T^2_t(I)
    & =
    B' \bigl[
        \delta'_{f,X}(I) \otimes R_t(I)
        + R_t(I) \otimes \delta'_{f,X}(I)
    \bigr]  \\
    T^3_t(I)
    & =
    B' \bigl[
        D_x f(\cdot,X_{\cdot})\delta X_t(I) \otimes R_t(I) + R_t(I) \otimes D_x f(\cdot,X_{\cdot}) \delta X_t(I)
    \bigr].
\end{align*}
Here we introduce the notation
\begin{equation*}
    e^1_{E}(A) = 
    \begin{cases}
        1 & \text{if} \ E \cap A \neq \emptyset, \\
        0 & \text{if} \ E \cap A = \emptyset,
    \end{cases}
    \qquad 
    e^2_E = 1-e^1_E.
\end{equation*}
Moreover, let $D = D(\mathcal{F}_{X,K})$,
$D_{\varepsilon} = D_{\varepsilon}(\mathcal{F}_{X,K})$, and $D^{\varepsilon} \setminus D_{\varepsilon}$
for each $\varepsilon > 0$.
Notice that $D = \bigcup_{\varepsilon > 0} D_\varepsilon$.
Then we can deduce from \eqref{5.2g} that 
\begin{align} \label{5.2h}
    & B'(\delta f(\cdot,X_{\cdot})_t^{\otimes 2})
    - e^1_{D_{\varepsilon}} B'(\delta f(\cdot,X_{\cdot})_t^{\otimes 2})    \\
    & \qquad =
    \Phi_f(\cdot,X_{\cdot})B(\delta X_t^{\otimes 2}) - e^1_{D_{\varepsilon}} \Phi_f(\cdot,X_{\cdot})B(\delta X_t^{\otimes 2})  \notag \\
    & \qquad \quad
    + e^2_{D_{\varepsilon}} B'({\delta'}_{f,X}^{\otimes 2})
    + e^2_{D_{\varepsilon}} B'(R_t^{\otimes 2})
    + e^2_{D_{\varepsilon}} T^1_t
    + e^2_{D_{\varepsilon}} T^2_t
    + e^2_{D_{\varepsilon}} T^3_t   \notag
\end{align}
Summing each term of \eqref{5.2h} over the partition $\pi_n$, we obtain
\begin{align} \label{5.2i}
    & 
    \sum_{I \in \pi_n} B'(\delta f(\cdot,X_{\cdot})_t^{\otimes 2})(I)   \\
    & \qquad =
    \sum_{I \in \pi_n}
        e^1_{D_{\varepsilon}}(I) B'(\delta f(\cdot,X_{\cdot})_t^{\otimes 2})(I) 
    + \sum_{I \in \pi_n}
        e^2_{D_{\varepsilon}} (I)
        B'({\delta'}_{f,X}^{\otimes 2})(I)     \notag \\
    & \qquad \quad 
    + \sum_{I \in \pi_n}
        \Phi_f(\cdot,X_{\cdot}) B(\delta X_t^{\otimes 2})(I)
    - \sum_{I \in \pi_n}
        e^1_{D_{\varepsilon}} (I)
        \Phi_f(\cdot,X_{\cdot}) B(\delta X_t^{\otimes 2})(I)    \notag \\
    & \qquad \quad 
    + \sum_{I \in \pi_n}
        e^2_{D_{\varepsilon}} (I)
        B'(R_t(I)^{\otimes 2}) 
    + \sum_{I \in \pi_n} 
        e^2_{D_{\varepsilon}} (I) T^1_t(I)
    + \sum_{I \in \pi_n} 
        e^2_{D_{\varepsilon}} (I) T^2_t(I)
    + \sum_{I \in \pi_n} 
        e^2_{D_{\varepsilon}} (I) T^3_t(I)   \notag \\
    & \qquad \eqcolon
    I_1^{(n)} + I_2^{(n)} + I_3^{(n)} - I_4^{(n)}
    +I_5^{(n)} + I_6^{(n)} + I_7^{(n)} + I_8^{(n)}.  \notag
\end{align}
Therefore, it suffices to observe the limit of each $I_i^{(n)}$.

\emph{Step 3: Behavior of $I_i^{(n)}$'s.}
We first treat $i \in \{ 1,3,4 \}$.
By Lemma~\ref{5.1h} and assumptions (C1) and (C2) for $X$, 
we see that
\begin{align*}
    \lim_{n \to \infty} I^{(n)}_4
    = \lim_{n \to \infty} \sum_{s \in D_{\varepsilon} \cap [0,t]} 
    \Phi_f(\underline{\pi_n}(s),X_{\underline{\pi_n}(s)}) B (\delta X_t^{\otimes 2})(\pi_n(s))
    =
    \sum_{s \in D_{\varepsilon} \cap [0,t]} \Phi_f(s-,X_{s-}) B(\Delta X_s^{\otimes 2}).
\end{align*}
By Lemma~\ref{5.2d} and the assumption, $(\pi_n)$ and $f(\cdot,X_{\cdot})$
satisfies (C1) and (C2) on $[0,t]$. Therefore, we can deduce that
\begin{equation*}
    \lim_{n \to \infty} I^{(n)}_1 = \sum_{s \in D_{\varepsilon} \cap [0,t]} \Delta f(s,X_s).
\end{equation*}
Lemma~\ref{2g} implies that
\begin{equation*}
    \lim_{n \to \infty} I_3^{(n)}
    = \int_{\lrbrack 0,t \rbrack} \Phi_f(s-,X_{s-})\, \mathrm{d}Q_B(X,X)_s
\end{equation*}
holds in the weak topology.

It remains to observe the behaviour of residual terms $I^{(n)}_i$'s for $i \in \{ 2,5,6,7,8\}$,
For convenience, set
\begin{align*}
    \alpha(\varepsilon,n) & = O^+_t(X-J_{\varepsilon}(X);\pi_n),   \\
    \beta(\varepsilon,n) & = \sup_{x \in K} O_t^+(f_\cdot(x)-J_{\varepsilon}(f_{\cdot}(x));\pi_n),   \\
    C & = \sup_{(s,x) \in [0,t] \times K} \lVert D_x f(s,x) \rVert.
\end{align*}
Then we notice that the estimates
\begin{equation*}
    e^{2}_{D_{\varepsilon}}(I) \lVert \delta X_t(I) \rVert
    \leq
    \alpha(\varepsilon,n), \qquad
    e^{2}_{D_{\varepsilon}}(I) \lVert \delta f_t(I) \rVert_{\infty,K}
    \leq
    \beta(\varepsilon,n)
\end{equation*}
hold for $I \in \pi_n$.
This leads to the inequalities 
\begin{align*}
    \lVert I_2^{(n)} \rVert
    & \leq
    \lVert B' \rVert V(\mathcal{F}_K)_t
    \beta(\varepsilon,n),   \\
    \lVert I^{(n)}_5 \rVert
    & \leq 
    V^{(2)}(X;\Pi)_t
        \sup_{
            \substack{
                s \in [0,t], x,y \in K  \\
                \lVert x - y \rVert \leq \alpha(\varepsilon,n)
            }
        }
        \lVert D_xf(s,x) - D_x f(s,y) \rVert^2,   \\
    \lVert I^{(n)}_6 \rVert
    & \leq 
    2 C \lVert B' \rVert V(\mathcal{F}_K)_t \alpha(\varepsilon,n),   \\
    \lVert I^{(n)}_7 \rVert
    & \leq
    4 C \lVert B' \rVert V(\mathcal{F}_K)_t \alpha(\varepsilon,n),  \\
    \lVert I^{(n)}_8 \rVert 
    & \leq 
    2 C \lVert B' \rVert 
        V^{(2)}(X;\Pi)_t
        \sup_{
            \substack{
                s \in [0,t], x,y \in K  \\
                \lVert x - y \rVert \leq \alpha(\varepsilon,n)
            }
        }
        \lVert D_xf(s,x) - D_x f(s,y) \rVert.
\end{align*}
Therefore,
\begin{equation*}
    \varlimsup_{\varepsilon \downarrow\downarrow 0} \varlimsup_{n \to \infty} I^{(n)}_i = 0, \qquad 
    i \in \{2,5,6,7,8 \}.
\end{equation*}
Note that here we have used
\begin{equation} \label{5.2ib}
    \varlimsup_{\varepsilon \downarrow\downarrow 0} \varlimsup_{n \to \infty}
    \sup_{
            \substack{
                s \in [0,t], x,y \in K  \\
                \lVert x - y \rVert \leq \alpha(\varepsilon,n)
            }
        }
        \lVert D_xf(s,x) - D_x f(s,y) \rVert
    = 0.
\end{equation}
To deduce \eqref{5.2ib}, recall that
$D_x f \colon \mathbb{R}_{\geq 0} \to C_\mathcal{K}(E,\mathcal{L}(E,F))$
is {\cadlag} by assumption
and therefore it has totally bounded range on $[0,T]$.
Hence, by Ascoli's theorem (Theorem~\ref{5.1e}),
the family $(D_x f_s\vert_{K};s \in [0,t])$ is uniformly equicontinuous,
which guarantees \eqref{5.2ib}.

\emph{Step 4: Conclusion.}
Combining all the estimates obtained in step~3,
we obtain for all $z^\ast \in G^\ast$ with $\lVert z^\ast \rVert = 1$
\begin{align} \label{5.2ic}
    &
    \varlimsup_{n \to \infty}
    \left\lvert
        \langle 
            z^\ast, 
            (\text{RHS of \eqref{5.2e}}) - (\text{RHS of \eqref{5.2i}}) 
        \rangle 
    \right\rvert   \\
    & \qquad \leq
    \varlimsup_{\varepsilon \downarrow \downarrow 0} \varlimsup_{n \to \infty}
        \lVert B' \rVert \sum_{s \in D^{\varepsilon} \cap [0,t]} \lVert \Delta f(s,X_s) \rVert^2 
    + \varlimsup_{\varepsilon \downarrow \downarrow 0} \varlimsup_{n \to \infty}
        \sum_{s \in D^{\varepsilon} \cap [0,t]} \lVert \Phi_f(s-,X_{s-})B(\Delta X_s^{\otimes 2} )\rVert 
    = 0,  \notag
\end{align}
which completes the proof for the weak case.
If $Q_B(X,X)$ is the strong $B$-quadratic variation of $X$,
then the convergence of $I^{(n)}_3$ holds in the norm topology.
In this case, we obtain the norm convergence of discrete $B'$-quadratic variation
by replacing \eqref{5.2ic} with a similar norm estimate.
\end{proof}

\begin{Cor} \label{5.2j}
In addition to the assumptions in Theorem~\ref{5.2b},
assume that there is an increasing divergent sequence of positive numbers $(T_n)$
and an increasing sequence of compact convex sets $(K_n)$ such that
$(T_n,K_n)$ satisfies conditions (i)--(iii)
in Theorem~\ref{5.2b} for each $n \in \mathbb{N}$.
Then \eqref{5.2c} holds for all $t \in \mathbb{R}_{\geq 0}$.
\end{Cor}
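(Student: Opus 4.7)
The plan is a straightforward localization argument based on Theorem~\ref{5.2b}. For each $n \in \mathbb{N}$, the pair $(T_n, K_n)$ satisfies hypotheses (i)--(iii) of the theorem, so applying it yields that the restricted path $[0, T_n] \ni t \mapsto f(t, X_t)$ has weak $B'$-quadratic variation given by the right-hand side of \eqref{5.2c} for all $t \in [0, T_n]$ (and strong $B'$-quadratic variation if $Q_B(X,X)$ is strong).

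The key observation that lets these local conclusions be patched together is that the discrete covariation sum
\begin{equation*}
    \sum_{I \in \pi_k} B'(\delta f(\cdot, X_\cdot)_t^{\otimes 2})(I)
\end{equation*}
depends only on the values of $(f, X)$ on $[0, t]$, via the truncations $s \wedge t$ and $r \wedge t$ built into $\delta f(\cdot, X_\cdot)_t$. Consequently, for fixed $t \geq 0$ and any $n$ with $T_n \geq t$, this sum coincides with the corresponding discrete sum for the path stopped at $T_n$. Fixing an arbitrary $t \geq 0$, I would choose $n$ large enough that $T_n \geq t$ (possible since $T_n \to \infty$) and read off the desired weak (resp.\ norm) convergence to the right-hand side of \eqref{5.2c} at $t$ directly from Theorem~\ref{5.2b} applied on $[0, T_n]$.

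It remains to verify the structural conditions in Definition~\ref{2b}: the candidate path defined on $\mathbb{R}_{\geq 0}$ by the right-hand side of \eqref{5.2c} must be {\cadlag}, of finite variation on every compact interval, and satisfy $\Delta Q_{B'}(f(\cdot, X_\cdot), f(\cdot, X_\cdot))_t = B'(\Delta f(t, X_t))^{\otimes 2}$. All three are local properties at each point of $\mathbb{R}_{\geq 0}$, so they are inherited from the corresponding conclusions of Theorem~\ref{5.2b} on each $[0, T_n]$; consistency between different levels $n$ and $m$ is automatic because the defining formula is intrinsic and the limit is unique.

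I do not anticipate any genuine obstacle in this argument; the content is really just the remark that the formula \eqref{5.2c} respects stopping and that pointwise limits glue unambiguously. The strong statement is obtained by the same argument, replacing weak convergence with norm convergence throughout.
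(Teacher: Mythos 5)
Your localization argument is correct and is precisely the routine gluing the paper leaves implicit (it states Corollary~\ref{5.2j} without proof): since $Q^{\pi_k}_{B'}(f(\cdot,X_\cdot),f(\cdot,X_\cdot))_t$ depends only on the path on $[0,t]$ via the $\wedge\, t$ truncations, Theorem~\ref{5.2b} applied with any $T_n \geq t$ gives the convergence and the structural properties at $t$, and the local conclusions patch consistently by uniqueness of limits.
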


\begin{Rem} \label{5.2k}
The assumption on $(\pi_n)$ in Corollary~\ref{5.2j} is
always satisfied whenever $\lvert \pi_n \rvert \to 0$.
If $(T_n,K_n)$ satisfies $X([0,T_n]) \subset K_n$
and $\lim_{k \to \infty} \sup_{x \in K_n} O^-_t((X,f(\cdot,x)),\pi_k) = 0$
for all $n$, then the assumption on $(\pi_n)$ in Corollary~\ref{5.2j}
is also satisfied. See Example~\ref{5.1c} and Appendix B.
\end{Rem}

To derive $C^1$-transformation formula for tensor quadratic variations,
recall the definition of a uniform crossnorm.
Let $\alpha$ be a system that defines
a reasonable crossnorm on $E \otimes F$ for each pair of Banach spaces $(E,F)$.
We say that $\alpha$ is a uniform crossnorm
if for arbitrary four Banach spaces $E_1$, $E_2$, $F_1$, $F_2$
and for all $(T,S) \in \mathcal{L}(E_1,E_2) \times \mathcal{L}(F_1,F_2)$,
the tensor product $T \otimes S$ defines a bounded operator
$E_1 \widehat{\otimes}_{\alpha} E_2 \to F_2 \widehat{\otimes}_{\alpha} F_2$
satisfying $\lVert T \otimes S \rVert \leq \lVert T \rVert \lVert S \rVert$.
In the remainder of this section, let $\alpha$ be a fixed uniform crossnorm.

\begin{Cor} \label{5.2kb}
Let $X \colon \mathbb{R}_{\geq 0} \to E$ be a 
c\`{a}dl\`{a}g path and $f \colon \mathbb{R}_{\geq 0} \to C^1_{\mathcal{K}}(E,F)$
be a {\cadlag} path such that the family
$(f(\cdot,x); x \in K)$ has uniformly finite variation for all
compact sets $K \subset E$.
Assume that there is a sequence $0 < T_0 < T_1 < \dots <T_n < \dots \to \infty$
and an increasing sequence of compact convex subsets of $E$, denoted by $(K_n)$,
such that $(T_n,K_n)$ satisfies conditions (i)--(iii):
\begin{enumerate}
    \item the image $X([0,T_n]) \subset K_n$ is included in $K$;
    \item the sequence $(\pi_n)$ approximates $(X,f(\cdot,x), D_x f(\cdot,x))$ from the left for all $x \in K_n$;
    \item the sequence $(\pi_n)$ satisfies (UC) for the family $(X,f(\cdot,x))_{x \in K_n}$.
\end{enumerate}
If $X$ has strong (resp. weak) $\alpha$-tensor quadratic variation and finite 2-variation along $(\pi_n)$,
then $f_{\cdot}(X_{\cdot})$ has the strong (resp. weak) $\alpha$-tensor quadratic variation,
given by
\begin{equation}
    {^\alpha [f(\cdot,X_{\cdot}),f(\cdot,X_{\cdot})]}_t
    =
    \int_0^t
            D_x f (s-,X_{s-})^{\otimes 2}
            \mathrm{d}{^\alpha [X,X]}^{\mathrm{c}}_s
    + \sum_{0 < s \leq t} \Delta f(s,X_s)^{\otimes 2}, \qquad t \geq 0.
\end{equation}
\end{Cor}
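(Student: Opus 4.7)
The strategy is to reduce Corollary~\ref{5.2kb} to Theorem~\ref{5.2b} (and its global version Corollary~\ref{5.2j}) by taking $B$ and $B'$ to be the canonical bilinear maps into the $\alpha$-tensor products and producing an explicit $\Phi_f$ of the form $(t,x) \mapsto D_x f(t,x)^{\otimes 2}$. Concretely, I would set $E_1 = E \widehat{\otimes}_\alpha E$, $F_1 = F \widehat{\otimes}_\alpha F$, let $B \colon E \times E \to E_1$ and $B' \colon F \times F \to F_1$ be the canonical bilinear maps, and for each $T \in \mathcal{L}(E,F)$ let $T^{\otimes 2} \in \mathcal{L}(E \widehat{\otimes}_\alpha E, F \widehat{\otimes}_\alpha F)$ denote the unique bounded operator whose existence is guaranteed by the uniformity of $\alpha$ and which satisfies $\lVert T^{\otimes 2} \rVert \leq \lVert T \rVert^2$.

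The first step is to verify that $\Phi_f(t,x) \coloneqq D_x f(t,x)^{\otimes 2}$ commutes the diagram in Theorem~\ref{5.2b}. By construction $T^{\otimes 2}(u \otimes v) = Tu \otimes Tv$ for elementary tensors, so $T^{\otimes 2} \circ B = B' \circ (T \times T)$ on $E \times E$; applying this with $T = D_x f(t,x)$ gives the required identity pointwise. Next I would check that $\Phi_f$ is a {\cadlag} path into $C_{\mathcal{K}}(E, \mathcal{L}(E_1,F_1))$: the estimate
\begin{equation*}
    \lVert S^{\otimes 2} - T^{\otimes 2} \rVert
    \leq (\lVert S \rVert + \lVert T \rVert) \lVert S - T \rVert
\end{equation*}
valid for $S,T \in \mathcal{L}(E,F)$ shows that $T \mapsto T^{\otimes 2}$ is a locally Lipschitz map, hence composition with the {\cadlag} path $D_x f \colon \mathbb{R}_{\geq 0} \to C_{\mathcal{K}}(E,\mathcal{L}(E,F))$ (uniformly bounded on compacts in $x$ on every bounded time interval) yields a {\cadlag} path into $C_{\mathcal{K}}(E,\mathcal{L}(E_1,F_1))$ with the same left limits. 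In particular, left approximation of $D_x f(\cdot,x)$ along $(\pi_n)$ is transported to left approximation of $\Phi_f(\cdot,x)$.

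With these ingredients, hypotheses (i)--(iii) of Theorem~\ref{5.2b} are satisfied on each interval $[0,T_n]$ with the compact convex set $K_n$: (i) and (iii) hold by assumption, and (ii) for $\Phi_f(\cdot,x)$ follows from the previous paragraph together with the assumed left approximation of $X$ and $f(\cdot,x)$. Therefore Theorem~\ref{5.2b} provides the strong (resp. weak) $B'$-quadratic variation of $f(\cdot,X_\cdot)$ on $[0,T_n]$, which, by the very definition of $B'$, is exactly the $\alpha$-tensor quadratic variation, and the formula \eqref{5.2c} specializes to the claimed identity
\begin{equation*}
    {^\alpha [f(\cdot,X_{\cdot}),f(\cdot,X_{\cdot})]}_t
    = \int_0^t D_x f(s-,X_{s-})^{\otimes 2} \mathrm{d}{^\alpha [X,X]}^{\mathrm{c}}_s
    + \sum_{0 < s \leq t} \Delta f(s,X_s)^{\otimes 2}.
\end{equation*}
Finally, Corollary~\ref{5.2j} applied to the exhausting sequence $(T_n,K_n)$ extends the identity from each $[0,T_n]$ to all of $\mathbb{R}_{\geq 0}$, completing the proof.

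I do not expect any serious obstacle: the conceptual content is already packaged in Theorem~\ref{5.2b}, and the work reduces to checking that $(D_x f)^{\otimes 2}$ inherits the required {\cadlag} regularity and left-approximation properties from $D_x f$. The only subtle point is ensuring that the continuity of $T \mapsto T^{\otimes 2}$ is strong enough to convert convergence in $C_{\mathcal{K}}(E,\mathcal{L}(E,F))$ into convergence in $C_{\mathcal{K}}(E,\mathcal{L}(E_1,F_1))$; this is where the uniformity of the crossnorm $\alpha$ (and hence the operator bound $\lVert T^{\otimes 2} \rVert \leq \lVert T \rVert^2$) is essential.
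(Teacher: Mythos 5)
Your proposal is correct and follows exactly the route the paper intends: the corollary is obtained by specializing Theorem~\ref{5.2b} and Corollary~\ref{5.2j} to the canonical bilinear maps into $E \widehat{\otimes}_{\alpha} E$ and $F \widehat{\otimes}_{\alpha} F$ with $\Phi_f = (D_x f)^{\otimes 2}$, and the key verification --- that the uniformity of $\alpha$ makes $T \mapsto T^{\otimes 2}$ a locally Lipschitz map transporting the {\cadlag} regularity and left approximation of $D_x f$ to $\Phi_f$ --- is the same observation the paper itself records in the proof of Corollary~\ref{5.2l}.
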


Next, we consider a case in which $f$ of Corollary~\ref{5.2kb}
is represented as $f(t,x) = \widetilde{f}(A_t,x)$
for some function $\widetilde{f}$ and a {\cadlag} path $A$ of finite variation.

\begin{Cor} \label{5.2l}
Let $X \in D(\mathbb{R}_{\geq 0},E)$ and $A \in FV(\mathbb{R}_{\geq 0},E)$.
Moreover, let $f \colon F \times E \to G$ be a function satisfying the following conditions:
\begin{enumerate}
    \item the restriction of $f$ to each compact subset of $F \times E$ is Lipschitz continuous;
    \item the map $x \mapsto f(a,x)$ is G\^{a}teaux differentiable and 
        $D_x f \in C_{\mathcal{K}}(F \times E,\mathcal{L}(E,G))$.
\end{enumerate}
Suppose that $(\pi_n)$ satisfies either $\lim_{n \to \infty} \lvert \pi_n \rvert = 0$
or $\lim_{n \to \infty} O^-_t((A,X),\pi_n) = 0$ for all $t \geq 0$.
If $X$ has strong (resp. weak) $\alpha$-tensor quadratic variation along $(\pi_n)$,
then $f(A,X)$ has the strong (resp. weak) $\alpha$-tensor
quadratic variation, given by
\begin{equation*}
    {^\alpha [f(A,X),f(A,X)]}_t
    =
    \int_0^t D_x f(A_{s-},X_{s-})^{\otimes 2} \mathrm{d}{^\alpha [X,X]}^{\mathrm{c}}_s
    + \sum_{0 < s \leq t} \Delta f(A,X)_s^{\otimes 2}.
\end{equation*}
\end{Cor}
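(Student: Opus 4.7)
The plan is to reduce Corollary~\ref{5.2l} to Corollary~\ref{5.2kb} by treating $t \mapsto f(A_t,\cdot)$ as a c\`{a}dl\`{a}g path valued in the function space $C^1_{\mathcal{K}}(E,G)$. Concretely, define $g \colon \mathbb{R}_{\geq 0} \to C^1_{\mathcal{K}}(E,G)$ by $g(t)(x) = f(A_t,x)$. Then $g(\cdot,X_\cdot) = f(A,X)$, $D_x g(t,x) = D_x f(A_t,x)$, and $\Delta g(s,X_s) = \Delta f(A,X)_s$, so once the hypotheses of Corollary~\ref{5.2kb} are verified for $g$, the claimed formula is immediate.

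The first task is to show that $g \in D(\mathbb{R}_{\geq 0},C^1_{\mathcal{K}}(E,G))$ and that $(g(\cdot,x))_{x \in K}$ has uniformly finite variation for every compact $K \subset E$. For fixed $K$ and $T > 0$, assumption~(i) yields a Lipschitz constant $L$ for $f$ on a compact subset of $F \times E$ containing $\overline{A([0,T])} \times K$ (working locally where necessary), so that $V(g(\cdot,x);[0,T]) \leq L\, V(A)_T$ holds uniformly in $x \in K$ and the c\`{a}dl\`{a}g property of $A$ transfers to $t \mapsto g(t)\vert_K$ in the sup norm. Assumption~(ii) gives the analogous statement for $t \mapsto D_x g(t)\vert_K$; moreover, left approximation of $(X,g(\cdot,x),D_x g(\cdot,x))$ by $(\pi_n)$ reduces to left approximation of $(X,A)$ by composition with continuous maps on compacta.

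The central step is to construct an admissible sequence $(T_n,K_n)$ satisfying (i)--(iii) of Corollary~\ref{5.2kb}: take $T_n \uparrow \infty$ and compact convex $K_n \subset E$ containing $X([0,T_n])$. In the case $O^-_t((A,X),\pi_n) \to 0$, the image $X([0,T_n])$ is automatically totally bounded (each interval of $\pi_n$ contributes one small ball), so such a $K_n$ exists; moreover $O^-_t((X,g(\cdot,x))_{x \in K_n},\pi_n) \to 0$ holds by the uniform Lipschitz bound on $f$, giving (UC) via Example~\ref{5.1c}(ii). In the case $\lvert \pi_n \rvert \to 0$, one instead inherits equi-right-regularity of the parametrized family $(X,g(\cdot,x))_{x \in K_n}$ from that of $(X,A)$ through the uniform Lipschitz constant of $f$ on $K_n' \times K_n$ (with $K_n' \supset A([0,T_n])$), and Example~\ref{5.1c}(i) yields (UC). Applying Corollary~\ref{5.2kb} to $g$ then delivers the desired formula. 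The main obstacle is this last verification: one must simultaneously control (UC) across an uncountable parametrized family indexed by the compact convex sets $K_n$, and the uniform Lipschitz constant of $f$ on products of compacta is the decisive tool that converts properties of the single path $(X,A)$ into the required uniform properties of the family $(X,g(\cdot,x))_{x \in K_n}$.
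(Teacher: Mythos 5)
Your proposal is correct and follows essentially the same route as the paper: both define $g(t,x) = f(A_t,x)$, verify via the local Lipschitz property of $f$ and the finite variation of $A$ that $g$ is a c\`adl\`ag $C^1_{\mathcal{K}}(E,G)$-valued path with $(g(\cdot,x))_{x\in K}$ of uniformly finite variation, and then dispose of the conditions on $(\pi_n)$ by splitting into the mesh case (equi-right-regularity of the family) and the oscillation case (the bound $\sup_{x\in K} O^-_t((X,g(\cdot,x)),\pi_n) \leq C\, O^-_t((A,X),\pi_n)$). Reducing to Corollary~\ref{5.2kb} rather than directly to Theorem~\ref{5.2b} and Corollary~\ref{5.2j} is an immaterial difference, since the former is the tensor-norm specialization of the latter.
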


Notice that Condition~(C) for $(A,X)$ is not sufficient to prove the corollary
because a large jump of $A$ is not necessarily a large jump of $t \mapsto f(A_t,x)$.

\begin{proof}
Let $g(t,x) = f(A_t,x)$.
Then, it suffices to check that
$g$ and $X$ satisfy the assumptions in Theorem~\ref{5.2b} and Corollary~\ref{5.2j}.
We see that $a \mapsto f(a,\cdot)$ is a function of the class
$C_{\mathcal{K}}(F,C^1_{\mathcal{K}}(E,G))$
by conditions~(i) and (ii), and therefore 
$t \mapsto g(t,\cdot)$ belongs to $D(\mathbb{R}_{\geq 0},C^1_{\mathcal{K}}(E,G))$.
Condition~(i) and the fact that $A$ has finite variation
shows that $(g(\cdot,x))_{x \in K}$ has uniformly finite variation
for each compact $K \subset E$.
Moreover, since the canonical bilinear map
$\mathcal{L}(E,F) \times \mathcal{L}(E,F) \to \mathcal{L}(E \widehat{\otimes}_{\alpha} E, F \widehat{\otimes}_{\alpha} F)$
is continuous, the path $t \mapsto D_x g(t,\cdot)^{\otimes 2}$
is {\cadlag} as a $C_{\mathcal{K}}(E,\mathcal{L}(E \widehat{\otimes}_{\alpha} E, F \widehat{\otimes}_{\alpha} F))$-valued path.

It remains to show that the conditions on $(\pi_n)$ hold.
If $\lvert \pi_n \rvert \to 0$,
then $(\pi_n)$ satisfies the expected assumptions by Remark~\ref{5.2k}.
Otherwise, we suppose $O_t^-(A,X;\pi_n) \to 0$ as $n \to \infty$.
Then there is a $C>0$ such that
$\sup_{x \in K} O^-_t((X,g(\cdot,x)),\pi_n) \leq C O^-_t((X,A),\pi_n)$
for each $t > 0$ and compact set $K \subset E$.
This implies the desired conditions again by Remark~\ref{5.2k}.
\end{proof}

\section{Quadratic variation of the It\^{o}--F\"{o}llmer integrals}

Let $E$ and $G$ be Banach spaces
and $X$ be a {\cadlag} path that has
tensor quadratic variation and
finite 2-variation along a sequence of partitions $(\pi_n)$.
Then, by the It\^{o}--F\"{o}llmer formula (Theorem~\ref{2e}),
\begin{align*}
    \int_0^t D_xf(X_{s-}) \mathrm{d}X_s
    & =
    f(X_t) - f(X_0)
    - \frac{1}{2} \int_0^t D_x^2 f(X_{s-})\mathrm{d}[X,X]^{\mathrm{c}}_s   \\
    & \quad - \sum_{0 < s \leq t} \{ \Delta f(X_s) - D_x f(X_{s-})\Delta X_s \}
\end{align*}
holds for a function $f \colon E \to G$ with $C^2$-smoothness in an appropriate sense.
In this section, we study the properties of the path 
$t \mapsto \int_0^t D_xf(X_{s-}) \mathrm{d}X_s$,
focusing on its quadratic variation.
In semimartingale theory, 
the stochastic integral of a suitable predictable process
with respect to a semimartingale
is again a semimartingale,
and its quadratic variation can be explicitly
computed by using the quadratic variation
of the integrator.
We aim to prove a corresponding formula
in the It\^{o}--F\"{o}llmer calculus in Banach spaces.
Such formulae have been already
given by Sondermann~\cite{Sondermann_2006}
and Schied~\cite{Schied_2014} for the finite-dimensional
and continuous case
and by Hirai~\cite{Hirai_2019} for the
finite-dimensional and {\cadlag} case.
Ananova and Cont~\cite{Ananova_Cont_2017}
gave a corresponding result,
which they call the pathwise isometry formula,
in a more general situation where the function $f$ is a path-dependent one.

\begin{Th} \label{8b}
Let $\alpha$ be a uniform crossnorm and
assume that $X$, $A$, $f$, and $(\pi_n)$ satisfy the same conditions
as those in Corollary~\ref{2fc}.
Define a $G$-valued {\cadlag} path $Y$ by
\begin{equation*}
    Y_t = \int_0^t D_xf(A_{s-},X_{s-}) \mathrm{d}X_s, \qquad t \geq 0.
\end{equation*}
Then $Y$ correspondingly has the strong or weak $\alpha$-tensor quadratic variation given by
\begin{equation} \label{8c}
    {^\alpha [Y,Y]}_t 
    = 
    \int_0^t D_x f(A_{s-},X_{s-})^{\otimes 2} \mathrm{d}{^\alpha [X,X]}_s, \qquad
    t \geq 0.
\end{equation}
\end{Th}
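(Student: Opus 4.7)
The plan is to combine the It\^{o} formula (Corollary~\ref{2fc}) with the $C^1$-transformation formula for $f(A,X)$ (Corollary~\ref{5.2kb}) and a cross-variation lemma for finite-variation paths. By Corollary~\ref{2fc}, write $Y_t = Z_t - Z_0 - H_t$, where $Z_t \coloneqq f(A_t,X_t)$ and
\begin{equation*}
    H_t \coloneqq \int_0^t D_a f(A_{s-},X_{s-})\mathrm{d}A^{\mathrm{c}}_s
        + \tfrac{1}{2}\int_0^t D_x^2 f(A_{s-},X_{s-})\mathrm{d}{^\alpha[X,X]}^{\mathrm{c}}_s
        + \sum_{0<s\leq t}\{\Delta f(A_s,X_s) - D_xf(A_{s-},X_{s-})\Delta X_s\}
\end{equation*}
is a {\cadlag} path of finite variation; in particular $\Delta Y_s = \Delta Z_s - \Delta H_s = D_xf(A_{s-},X_{s-})\Delta X_s$. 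The hypotheses inherited from Corollary~\ref{2fc} suffice to apply Corollary~\ref{5.2kb} to the path $t \mapsto f(A_t,\cdot)$: the $C_{\mathcal{K}}$-continuity of $D_a f$ and $D_x f$ yields Lipschitz continuity of $f$ on each compact subset of $F \times E$ (via Mazur's theorem on closed convex hulls of compact sets together with the mean-value theorem), and Condition~(UC) for the family $(X, f(A_\cdot, x))_{x \in K}$ is inherited from Condition~(C) for $(A,X)$ and left-approximation. This gives
\begin{equation*}
    {^\alpha[Z,Z]}_t = \int_0^t D_xf(A_{s-},X_{s-})^{\otimes 2}\mathrm{d}{^\alpha[X,X]}^{\mathrm{c}}_s + \sum_{0<s\leq t}\Delta Z_s^{\otimes 2}.
\end{equation*}

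Since $\delta Y_t(I) = \delta Z_t(I) - \delta H_t(I)$, bilinearity of $\otimes$ decomposes the discrete tensor sum as
\begin{equation*}
    Q^{\pi_n}_\otimes(Y,Y)_t = Q^{\pi_n}_\otimes(Z,Z)_t - \sum_{I \in \pi_n}\delta Z_t(I)\otimes\delta H_t(I) - \sum_{I \in \pi_n}\delta H_t(I)\otimes\delta Z_t(I) + Q^{\pi_n}_\otimes(H,H)_t.
\end{equation*}
The key analytic step is a cross-variation lemma: whenever $H$ has finite variation and $Z$ has $\alpha$-tensor quadratic variation along $(\pi_n)$,
\begin{equation*}
    \sum_{I\in\pi_n}\delta Z_t(I)\otimes\delta H_t(I) \xrightarrow{n\to\infty} \sum_{0<s\leq t}\Delta Z_s\otimes\Delta H_s,
\end{equation*}
with analogous statements for the reversed factors and for $Q^{\pi_n}_\otimes(H,H)_t \to \sum_{0<s\leq t}\Delta H_s^{\otimes 2}$. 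I would prove this by splitting at $D_\varepsilon(Z)\cup D_\varepsilon(H)$: left-approximation of $(\pi_n)$ handles the finitely many large-jump intervals, while on the small-jump intervals the crossnorm bound $\alpha(\delta Z\otimes\delta H) \leq \lVert\delta Z\rVert\lVert\delta H\rVert$ combined with $V(H)_t < \infty$ and Condition~(C3) for $Z$ drives the remainder to zero as $n\to\infty$ and then $\varepsilon\downarrow 0$. These estimates are all in norm, so the convergence holds in both the strong and weak cases.

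Substituting $\Delta Z_s = \Delta Y_s + \Delta H_s$ telescopes the jump contributions,
\begin{equation*}
    \sum\Delta Z_s^{\otimes 2} - \sum\Delta Z_s\otimes\Delta H_s - \sum\Delta H_s\otimes\Delta Z_s + \sum\Delta H_s^{\otimes 2} = \sum(\Delta Y_s)^{\otimes 2},
\end{equation*}
which yields ${^\alpha[Y,Y]}_t = \int_0^t D_xf(A_{s-},X_{s-})^{\otimes 2}\mathrm{d}{^\alpha[X,X]}^{\mathrm{c}}_s + \sum_{0<s\leq t}(\Delta Y_s)^{\otimes 2}$. Since $(\Delta Y_s)^{\otimes 2} = D_xf(A_{s-},X_{s-})^{\otimes 2}(\Delta X_s\otimes\Delta X_s) = D_xf(A_{s-},X_{s-})^{\otimes 2}\Delta{^\alpha[X,X]}_s$, this is exactly \eqref{8c}. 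The main obstacle is the cross-variation lemma; once it is in place, everything else reduces to bookkeeping of jumps and continuous parts.
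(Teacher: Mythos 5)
Your proposal is correct and follows essentially the same route as the paper: decompose $f(A,X) = Y + B$ via the It\^{o} formula with $B$ of finite variation, apply the $C^1$-transformation formula to $f(A,X)$, and expand ${^\alpha[Y,Y]}$ by bilinearity so that the cross terms with the finite-variation part contribute only jump sums that telescope to $\sum (\Delta Y_s)^{\otimes 2}$. The only difference is that your ``cross-variation lemma'' is exactly what the paper imports as Corollaries~5.9 and 5.10 of \cite{Hirai_2022a} rather than proving inline, and your sketch of it is the standard argument.
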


\begin{proof}
We show the assertion for weak tensor quadratic variations.
By Corollary~\ref{5.2l},
the path $f(A,X)$ has the weak $\alpha$-tensor quadratic variation
\begin{equation*}
    {^\alpha [f(A,X),f(A,X)]}_t
    =
    \int_0^t
        D_x f(A_{s-},X_{s-})^{\otimes 2}
        \mathrm{d}{^\alpha [X,X]}^{\mathrm{c}}_s
    + \sum_{0 < s \leq t} (\Delta f(A_s,X_s))^{\otimes 2}.
\end{equation*}
Now let 
\begin{align*}
    B_t 
    & =
    \int_0^t D_af(A_{s-},X_{s-})\mathrm{d}A^{\mathrm{c}}_s
    + \frac{1}{2} \int_0^t D_x^2 f(A_{s-},X_{s-}) \mathrm{d}[X,X]^{\mathrm{c}}_s     \\
    & \quad 
    + \sum_{0 < s \leq t} \{ \Delta f(A_s,X_s) - D_x f(A_{s-},X_{s-}) \Delta X_s \}.
\end{align*}
Then by the It\^{o} formula, we have $f(A_t,X_t) = Y_t + B_t$.
According to Corollary~{5.10} of Hirai~\cite{Hirai_2022a},
we see that $Y$ has the weak $\alpha$-tensor quadratic variation 
\begin{equation*}
    {^\alpha [Y,Y]}_t = {^\alpha [f(A,X),f(A,X)]}_t - {^\alpha [f(A,X),Y]}_t - {^\alpha [Y,f(A,X)]}_t + {^\alpha [B,B]}_t.
\end{equation*}
By direct calculations of 
$[f(A,X),Y]$, $[Y,f(A,X)]$, and $[B,B]$ 
using Corollary~{5.9} in \cite{Hirai_2022a},
we consequently obtain formula~\eqref{8c}.
\end{proof}

\begin{Cor} \label{8d}
Under the assumptions of Theorem~\ref{8b},
the c\`{a}dl\`{a}g path $f(A,X)$ admits a decomposition 
\begin{equation*}
    f(A_t,X_t) = Y_t + C_t + D_t,
\end{equation*}
where $Y$ is a c\`{a}dl\`{a}g path having the $\alpha$-tensor quadratic variation given by 
\eqref{8c}, $C$ is a continuous path of finite variation, 
and $D$ is a purely discontinuous path of finite variation. 
\end{Cor}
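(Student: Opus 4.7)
The plan is to read the decomposition off directly from the Itô–Föllmer formula that underlies Theorem~\ref{8b}. Applying Corollary~\ref{2fc} to $f(A,X)$ yields $f(A_t,X_t) = Y_t + C_t + D_t$, where $Y$ is the Itô–Föllmer integral from Theorem~\ref{8b} and
\begin{gather*}
    C_t \coloneqq f(A_0,X_0) + \int_0^t D_a f(A_{s-},X_{s-})\,\mathrm{d}A^{\mathrm{c}}_s + \frac{1}{2}\int_0^t D_x^2 f(A_{s-},X_{s-})\,\mathrm{d}{^\alpha [X,X]}^{\mathrm{c}}_s,  \\
    D_t \coloneqq \sum_{0<s\leq t}\bigl\{\Delta f(A_s,X_s) - D_x f(A_{s-},X_{s-})\Delta X_s\bigr\}.
\end{gather*}
The quadratic-variation assertion about $Y$ is exactly Theorem~\ref{8b}, so only the regularity properties of $C$ and $D$ remain.

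That $C$ is continuous of finite variation is immediate: $A^{\mathrm{c}}$ and ${^\alpha [X,X]}^{\mathrm{c}}$ are continuous of finite variation by construction, and the integrands $D_a f(A_{s-},X_{s-})$ and $D_x^2 f(A_{s-},X_{s-})$ are locally bounded along the trajectory because $(A,X)$ has totally bounded range on every compact interval and $D_a f,D_x^2 f \in C_{\mathcal{K}}$.

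For $D$, the key point is to confirm absolute convergence of the series on every $[0,T]$. Writing each summand as $(f(A_s,X_s)-f(A_{s-},X_s)) + (f(A_{s-},X_s)-f(A_{s-},X_{s-})-D_xf(A_{s-},X_{s-})\Delta X_s)$, the mean value theorem controls the first piece by a constant multiple of $\|\Delta A_s\|$ and the second-order Taylor remainder controls the second piece by a constant multiple of $\|\Delta X_s\|^2$. The corresponding sums are finite because $A$ has finite variation and because $\sum_{0<s\leq T}\|\Delta X_s\|^2<\infty$ via the finite total variation of ${^\alpha [X,X]}$ together with the reasonable-crossnorm identity $\alpha(\Delta X_s\otimes\Delta X_s)=\|\Delta X_s\|^2$. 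Since $D$ is defined as a sum of its own jumps, it is purely discontinuous.

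The main (minor) obstacle is justifying the local constants used in the Taylor bound: one must ensure that the union of $\{(A_{s-},X_{s-})\}_{s\in[0,T]}$ with the segments $\{(A_{s-},X_{s-}+\theta\Delta X_s):s\in[0,T],\theta\in[0,1]\}$ is totally bounded in $F\times E$. This follows from the càdlàg property by isolating the finitely many large-jump segments and noting that the small-jump segments lie in an $\varepsilon$-neighborhood of a totally bounded set; the compact-open continuity of $D_a f$ and $D_x^2 f$ then delivers the required uniform bounds.
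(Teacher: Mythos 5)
Your proposal is correct and follows exactly the route the paper intends: the corollary is read off from the It\^{o} formula of Corollary~\ref{2fc}, with $Y$ the It\^{o}--F\"{o}llmer integral, $C$ the continuous Stieltjes integrals, and $D$ the jump series (this is the same splitting of the path $B$ appearing in the proof of Theorem~\ref{8b}). Your additional verification of the absolute summability of the jump series and the total boundedness of the connecting segments is the standard argument already implicit in the proof of the It\^{o} formula in the first paper of the series, so nothing further is needed.
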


\appendix
\section{Remarks on the Radon--Nikodym property}

Let $\mathcal{R}$ be a ring of subsets of $\Omega$
and $\mu \colon \mathcal{R} \to E$ be a 
finitely additive vector measure.
Here, recall that the variation of $\mu$ on $A \subset \Omega$
is defined as
\begin{equation*}
    \lvert \mu \rvert (A)
    =
    \sup \left\{
        \sum_{i} \lVert \mu(A_i) \rVert\,
        \middle\vert\, 
        \text{
            $(A_i)$ is a finite disjoint family of 
            elements of $\mathcal{R}$ and 
            $\bigcup A_i \subset A$.
        }
    \right\}
\end{equation*}
Then the function $\lvert \mu \rvert \colon \mathcal{R} \to \lbrack 0,\infty \rbrack$
defines a finitely additive measure.
We say that $\mu$ has bounded variation if $\lvert \mu \rvert(\Omega) < \infty$
and finite variation if $\lvert \mu \rvert(A) < \infty$ for all $A \in \mathcal{R}$.
If, moreover, $\mu$ is countably additive on $\mathcal{R}$,
then the variation $\lvert \mu \rvert$ is also 
countably additive on $\mathcal{R}$.

We quote the following proposition
in the theory of vector integration.
See Dinculeanu~\cite[Theorem 2.29]{Dinculeanu_2000} for a proof.
\begin{Prop} \label{a1b}
Given a positive measure space $(\Omega,\mathcal{A},\mu)$
and a $f \in L^1(\mu;E)$, define
\begin{equation*}
    (f \cdot \mu)(A) = \int_A f \, \mathrm{d}\mu
\end{equation*}
for each $A \in \mathcal{A}$.
Then $f \cdot \mu$ is an $E$-valued countably additive measure
of bounded variation.
Moreover, the variation is given by 
$\lvert f \cdot \mu \rvert = \lVert f \rVert \cdot \mu$
\end{Prop}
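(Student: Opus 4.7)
The plan is to verify the three assertions in order: countable additivity, bounded variation, and the explicit formula $\lvert f \cdot \mu \rvert = \lVert f \rVert \cdot \mu$.

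For countable additivity, let $(A_n)_{n \in \mathbb{N}}$ be a disjoint sequence in $\mathcal{A}$ with union $A$. The partial sums $g_N = \sum_{n=0}^{N} 1_{A_n} f$ converge $\mu$-almost everywhere to $1_A f$ and satisfy $\lVert g_N \rVert \leq \lVert f \rVert \in L^1(\mu)$, so the dominated convergence theorem for the Bochner integral yields $(f \cdot \mu)(A) = \sum_n (f \cdot \mu)(A_n)$ in the norm topology of $E$.

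For the upper bound $\lvert f \cdot \mu \rvert(A) \leq \int_A \lVert f \rVert \, \mathrm{d}\mu$, I would take any finite disjoint family $(A_i) \subset \mathcal{A}$ with $\bigcup_i A_i \subset A$ and apply the triangle inequality for the Bochner integral:
\begin{equation*}
\sum_i \lVert (f \cdot \mu)(A_i) \rVert \leq \sum_i \int_{A_i} \lVert f \rVert \, \mathrm{d}\mu \leq \int_A \lVert f \rVert \, \mathrm{d}\mu.
\end{equation*}
Taking the supremum over all such families yields the bound; specialising to $A = \Omega$ shows that $f \cdot \mu$ has bounded variation since $\lVert f \rVert \in L^1(\mu)$.

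The main obstacle is the reverse inequality $\lvert f \cdot \mu \rvert(A) \geq \int_A \lVert f \rVert \, \mathrm{d}\mu$. I would first treat simple functions: if $f = \sum_{i=1}^N x_i 1_{B_i}$ with the $B_i$ pairwise disjoint, then the family $(A \cap B_i)_{i=1}^N$ is disjoint, contained in $A$, and satisfies
\begin{equation*}
\sum_i \lVert (f \cdot \mu)(A \cap B_i) \rVert = \sum_i \lVert x_i \rVert \mu(A \cap B_i) = \int_A \lVert f \rVert \, \mathrm{d}\mu,
\end{equation*}
so equality holds in the simple case. For general $f \in L^1(\mu; E)$, I would approximate by simple functions $f_n \to f$ in $L^1(\mu; E)$, whence $\lVert f_n \rVert \to \lVert f \rVert$ in $L^1(\mu; \mathbb{R})$ as well. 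Subadditivity of the total variation together with the upper bound applied to $f - f_n$ gives
\begin{equation*}
\bigl\lvert \lvert f \cdot \mu \rvert(A) - \lvert f_n \cdot \mu \rvert(A) \bigr\rvert \leq \lvert (f - f_n) \cdot \mu \rvert(A) \leq \int_A \lVert f - f_n \rVert \, \mathrm{d}\mu \xrightarrow[n \to \infty]{} 0,
\end{equation*}
and combining this with $\lvert f_n \cdot \mu \rvert(A) = \int_A \lVert f_n \rVert \, \mathrm{d}\mu \to \int_A \lVert f \rVert \, \mathrm{d}\mu$ identifies $\lvert f \cdot \mu \rvert(A) = \int_A \lVert f \rVert \, \mathrm{d}\mu$, completing the proof.
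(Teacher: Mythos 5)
Your proof is correct. The paper does not actually prove Proposition~\ref{a1b}; it simply cites Dinculeanu [Theorem 2.29] for it, so there is no in-paper argument to compare against. Your self-contained proof is the standard one: dominated convergence for the Bochner integral gives countable additivity, the triangle inequality gives $\lvert f\cdot\mu\rvert(A)\leq\int_A\lVert f\rVert\,\mathrm{d}\mu$, and the reverse inequality follows from exactness on simple functions (where the defining sets themselves realize the supremum) together with density of simple functions in $L^1(\mu;E)$ and the estimate $\bigl\lvert\,\lvert f\cdot\mu\rvert(A)-\lvert f_n\cdot\mu\rvert(A)\,\bigr\rvert\leq\lvert(f-f_n)\cdot\mu\rvert(A)$. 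All steps check out; the only implicit ingredients are the reverse triangle inequality for total variations of measures of finite variation and the fact that $\lVert f_n\rVert\to\lVert f\rVert$ in $L^1(\mu)$, both of which are immediate.
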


\begin{Def} \label{a1c}
Let $(\Omega,\mathcal{A},\mu)$ be a finite positive measure space. 
A Banach space $E$ has the \emph{Radon--Nikodym property (RNP)
with respect to $(\Omega,\mathcal{A},\mu)$}
if for every $\mu$-absolutely continuous vector measure
$\nu \colon \mathcal{A} \to E$ of bounded variation,
there exists a $g \in L^1(\mu;E)$ such that 
\begin{equation*}
    \nu(A) = \int_A g(\omega) \, \mu(\mathrm{d}\omega) 
\end{equation*}
for all $A \in \mathcal{A}$.
The Banach space $E$ has the RNP
if it has the RNP with respect to
every finite positive measure space.
\end{Def}

It is known that every reflexive Banach space
and every separable dual space have the RNP
(see Diestel and Uhl~\cite{Diestel_Uhl_1977}).

\begin{Prop} \label{a1d}
Let $\mathcal{R}$ be a $\delta$-ring of subsets of $\Omega$
and let $\mu \colon \mathcal{R} \to \lbrack 0,\infty \rbrack$
be a $\sigma$-finite measure.
Suppose that the Banach space $E$ has the RNP
and $\nu \colon \mathcal{R} \to E$ is a vector measure
of finite variation.
If $\nu$ is $\mu$-absolutely continuous,
then there is a unique strongly measurable function $f \colon \Omega \to E$
such that $f 1_A \in L^1(\mu)$, and 
\begin{equation*}
    \nu(A) = \int_A f \, \mathrm{d}\mu
\end{equation*}
for all $A \in \mathcal{R}$.
\end{Prop}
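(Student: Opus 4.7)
The plan is to reduce Proposition~\ref{a1d} to the finite-measure form of the RNP recorded in Definition~\ref{a1c} by exploiting the $\sigma$-finiteness of $\mu$, and then to patch together the densities obtained on each finite piece by a uniqueness argument.

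First I would use $\sigma$-finiteness to choose an increasing sequence $(\Omega_n)_{n \in \mathbb{N}}$ in $\mathcal{R}$ with $\mu(\Omega_n) < \infty$ and $\Omega_\infty \coloneqq \bigcup_n \Omega_n$ exhausting the $\mathcal{R}$-support of $\mu$. For each fixed $n$, set $\mathcal{R}_n \coloneqq \{ A \in \mathcal{R} : A \subset \Omega_n \}$. The identity $\bigcup_k B_k = \Omega_n \setminus \bigcap_k(\Omega_n \setminus B_k)$, combined with the closure of a $\delta$-ring under finite differences and countable intersections, shows that $\mathcal{R}_n$ is genuinely a $\sigma$-algebra on $\Omega_n$. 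On the finite positive measure space $(\Omega_n,\mathcal{R}_n,\mu|_{\mathcal{R}_n})$, the restriction $\nu|_{\mathcal{R}_n}$ is a countably additive $E$-valued measure of bounded variation (since $\lvert \nu \rvert(\Omega_n) < \infty$ by the finite-variation hypothesis), and it is still $\mu$-absolutely continuous. Since $E$ has the RNP, Definition~\ref{a1c} supplies $f_n \in L^1(\mu|_{\mathcal{R}_n};E)$ with $\nu(A) = \int_A f_n \,\mathrm{d}\mu$ for all $A \in \mathcal{R}_n$; Proposition~\ref{a1b} further yields $\int_A \lVert f_n \rVert \,\mathrm{d}\mu = \lvert \nu \rvert(A)$ on $\mathcal{R}_n$.

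For compatibility, both $f_n$ and the restriction of $f_{n+1}$ to $\Omega_n$ represent $\nu|_{\mathcal{R}_n}$ as Bochner integrals on the same finite measure space, so the standard uniqueness of the Bochner--Radon--Nikodym density forces $f_{n+1} = f_n$ $\mu$-almost everywhere on $\Omega_n$. After modifying the $f_n$ on a common $\mu$-null set I define $f \colon \Omega \to E$ by $f \coloneqq f_n$ on $\Omega_n$ and $f \coloneqq 0$ on $\Omega \setminus \Omega_\infty$; this $f$ is strongly measurable. For any $A \in \mathcal{R}$, monotone convergence applied to $\lVert f \rVert$ gives
\begin{equation*}
    \int_A \lVert f \rVert \,\mathrm{d}\mu
    = \lim_{n \to \infty} \int_{A \cap \Omega_n} \lVert f_n \rVert \,\mathrm{d}\mu
    = \lim_{n \to \infty} \lvert \nu \rvert(A \cap \Omega_n)
    \le \lvert \nu \rvert(A) < \infty,
\end{equation*}
so $f 1_A \in L^1(\mu)$. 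Dominated convergence for Bochner integrals (with dominating function $\lVert f \rVert 1_A$) combined with the countable additivity of $\nu$ then produces
\begin{equation*}
    \nu(A) = \lim_{n \to \infty} \nu(A \cap \Omega_n)
    = \lim_{n \to \infty} \int_{A \cap \Omega_n} f \,\mathrm{d}\mu
    = \int_A f \,\mathrm{d}\mu.
\end{equation*}
Uniqueness of $f$ up to $\mu$-null modification follows from uniqueness on each $\mathcal{R}_n$ together with the fact that $\mu$ vanishes on the complement of $\Omega_\infty$.

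The main obstacle is the initial bookkeeping: verifying that the trace $\mathcal{R}_n$ really is a $\sigma$-algebra on $\Omega_n$ and that the notions of countable additivity, absolute continuity, and finite variation for a vector measure defined on a $\delta$-ring correctly translate to their finite-measure-space counterparts so that Definition~\ref{a1c} is directly applicable. Once this reduction is secured, the patching of $(f_n)$ via uniqueness and the passage from $\mathcal{R}_n$ back to $\mathcal{R}$ via monotone and dominated convergence are routine.
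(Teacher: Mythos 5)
Your proposal is correct and follows essentially the same route as the paper: both exploit $\sigma$-finiteness to reduce to finite positive measure spaces, observe that the trace of the $\delta$-ring on a set of finite measure is a $\sigma$-algebra so that the RNP in the sense of Definition~\ref{a1c} applies, and then reassemble the local densities using Proposition~\ref{a1b} and countable additivity. The only (cosmetic) difference is that the paper takes a \emph{disjoint} partition $(\Omega_n)$ and sums the pieces directly, which makes your compatibility/uniqueness step for the increasing exhaustion unnecessary.
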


\begin{proof}
First, take a $\mathcal{R}$-measurable partition $(\Omega_n)_{n \in \mathbb{N}}$
of $\Omega$ such that $\mu(\Omega_n) < \infty$ for all $n$.
Moreover, define a sequence of 
measures of bounded variation $(\nu_n)$
by $\nu_n(A) = \nu(A \cap \Omega_n)$.
Since each $\Omega_n \cap \mathcal{R}$ is a $\delta$-algebra (and hence a $\sigma$-algebra)
and $\nu_n$ is a measure of bounded variation,
there is a function $f_n \in L^1(\Omega_n,\mu\vert_{\mathcal{R} \cap \Omega_n};E)$
such that 
\begin{equation*}
    \nu_n(\Omega) = \int_{A \cap \Omega_n} f_n \,\mathrm{d}\mu, \qquad A \in \mathcal{R}.
\end{equation*}

Next, set $f(\omega) = f_n(\omega)$ for $\omega \in \Omega_n$.
Then the function $f$ is
strongly $\sigma(\mathcal{R})$-measurable. Here, note that for $A \in \mathcal{R}$,
$f$ is $\mu$-integrable on $A$.
Indeed, we have
\begin{equation*}
    \int_A \lVert f(\omega) \rVert \mathrm{d}\mu
    = 
    \sum_{n \in \mathbb{N}} \int_{A \cap \Omega_n} \lVert f_n(\omega) \rVert \mathrm{d}\mu   \\
    =
    \sum_{n \in \mathbb{N}} \vert \nu_n \rvert (A \cap \Omega_n)   \\
    =
    \sum_{n \in \mathbb{N}} \vert \nu \rvert (A \cap \Omega_n)
    =
    \lvert \nu \rvert (A)
    <
    \infty.
\end{equation*}
By the $\sigma$-additivity of $\nu$ and $\mu$ on $\mathcal{R}$,
we can justify the following calculation:
\begin{equation*}
    \int_A f(\omega) \mathrm{d}\mu
    =
    \sum_{n \in \mathbb{N}} \int_{A \cap \Omega_n} f(\omega) \mathrm{d}\mu
    =
    \sum_{n \in \mathbb{N}} \nu_n(A \cap \Omega_n)
    =
    \sum_{n \in \mathbb{N}} \nu(A \cap \Omega_n)
    =
    \nu(A).
\end{equation*}
Thus, we obtain the assertion.
\end{proof}

\section{Supplements on families of c\`{a}dl\`{a}g paths}

In this section,
we consider the problem of uniformly controlling
the oscillation of a family of c\`{a}dl\`{a}g paths
by a sequence of partitions.
To observe that, we first introduce the notion of equi-right-regularity.

\begin{Def} \label{a2.1b}
Let $E$ be a Banach space and $\mathcal{F}$ a subset of $D(\mathbb{R}_{\geq 0},E)$.
\begin{enumerate}
    \item The set $\mathcal{F}$ is \emph{equi-right-continuous}\index{equi-right continuous}
        at $t \in \mathbb{R}_{\geq 0}$
        if for every $\varepsilon > 0$
        there is a $\delta > 0$ such that
        \begin{equation*}
            \lVert f(t) - f(s) \rVert_F < \varepsilon
        \end{equation*}
        holds for all $f \in \mathcal{F}$ and $s \in [t,t+\delta \rlbrack$.
    \item The set $\mathcal{F}$ is \emph{equi-right-regular}\index{equi-right-regular}
        at $t > 0$ if it is equi-right-continuous at $t$
        and, for every $\varepsilon > 0$,
        there is a $\delta \in \lrbrack 0,t \rlbrack$ such that
        \begin{equation*}
                \lVert f(t-) - f(s) \rVert_F < \varepsilon
        \end{equation*}
        holds for all $f \in \mathcal{F}$ and $s \in \lrbrack t-\delta,t \rlbrack$.
    \item The set $\mathcal{F} \subset D(\mathbb{R}_{\geq 0},E)$ is \emph{equi-right-regular}
        if it is equi-right-regular at every $t > 0$
        and equi-right continuous at $0$.
\end{enumerate}
\end{Def}

A parametrized family $(f_{i})_{i \in I}$ of c\`{a}dl\`{a}g paths in $E$ is 
said to be equi-right-regular
if the set $\{ f_i \mid i \in I \} \subset D(\mathbb{R}_{\geq 0},E)$
is equi-right-regular.
The notion of equi-right-regularity is an analogue of 
the equicontinuity of the set of continuous functions.
A subset $\mathcal{F}$ of the space $C(\mathbb{R}_{\geq 0},E)$ is 
equi-right-regular if and only if it is equicontinuous.
We can prove an Arzel{\`a}--Ascoli-like theorem 
for the space $D(\mathbb{R}_{\geq 0},E)$ with the topology of 
uniform convergence on compact sets.
Note that like equicontinuity, equi-right-regularity can be characterized 
in the language of uniform convergence.

Let us give some examples of equi-right-regular families of paths.

\begin{Exm} \label{a2.1c}
Let $E$ and $F$ be Banach spaces.
\begin{enumerate}
    \item Every finite subset of $D(\mathbb{R}_{\geq 0},E)$ is equi-right-regular.  
    \item Let $\mathfrak{S}$ be a family of subsets of $E$ 
        and $C_{\mathfrak{S}}(E,F)$ be the set of all functions
        whose restriction to each $S \in \mathfrak{S}$ are continuous.
        We regard $C_{\mathfrak{S}}(E,F)$ as a topological space
        by the topology of uniform convergence on each member of $\mathfrak{S}$,
        i.e., the topology of $\mathfrak{S}$-convergence.
        See, for example, Bourbaki~\cite[Section X.1]{Bourbaki_1966b}
        for details about this topology.
        Then the family of paths $(f(\,\cdot\, , x); x \in S)$ is equi-right-regular
        for every $f \in D(\mathbb{R}_{\geq 0},C_{\mathfrak{S}}(E,F))$ and $S \in \mathfrak{S}$.
\end{enumerate}
\end{Exm}

If a family of c\`{a}dl\`{a}g path $\mathcal{F}$ is equi-right-regular,
its oscillation can be controlled uniformly by a partition.
The following lemma is a generalization of
Lemma~1 in Section~12 of Billingsley~\cite[122]{Billingsley_1999}
to equi-right-regular families.

\begin{Prop} \label{a2.1d}
Let $\mathcal{F}$ be an equi-right-regular subset of 
$D(\mathbb{R}_{\geq 0},E)$.
\begin{enumerate}
    \item For each $t > 0$ and $\varepsilon > 0$,
        there is a partition $\pi \in \Par([0,t])$ that satisfies
        \begin{equation*}
            O^-(f,\pi)
            \coloneqq \sup_{\lrbrack r,s \rbrack \in \pi} \omega(f, \lbrack r,s \rlbrack)
            < \varepsilon
            \qquad \text{for all $f \in \mathcal{F}$.}
        \end{equation*}
    \item The set 
        $\{ s \in [0,t] \mid
                \text{
                    $\lVert \Delta f(s) \rVert \geq C$
                    for some $f \in \mathcal{F}$
                }
            \}$
        is finite for every $t > 0$ and $C > 0$.
    \item Let $t \geq 0$ and suppose that $\lVert f(s) \rVert < C$ holds
        for all $s \in [0,t]$ and $f \in \mathcal{F}$.
        Then for every $\varepsilon > 0$,
        there exists a $\delta > 0$ such that 
        $\lVert f(s) - f(r) \rVert_E < C + \varepsilon$
        for all $f \in \mathcal{F}$
        whenever $s,r \in [0,t]$ satisfies
        $\lvert s - r \rvert < \delta$.    
\end{enumerate}
\end{Prop}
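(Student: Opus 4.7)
The plan is to prove (i) by a supremum-extension argument based on the equi-right-regularity, derive (ii) by an accumulation-point argument, and obtain (iii) from (i) combined with a jump-size bound.

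For (i), I would set $S = \{s \in [0,t] \mid \text{some } \pi \in \Par([0,s]) \text{ has } O^-(f,\pi) < \varepsilon \text{ for all } f \in \mathcal{F}\}$ and $s^* = \sup S$; equi-right-continuity at $0$ gives $0 \in S$. Equi-right-regularity at $s^*$ supplies $\delta > 0$ with $\omega(f,(s^*-\delta,s^*)) \leq \varepsilon/2$ and $\omega(f,[s^*,s^*+\delta)) < \varepsilon/2$ uniformly in $f \in \mathcal{F}$. By the definition of $s^*$, there exists $s' \in S \cap (s^*-\delta,s^*]$; concatenating a partition of $[0,s']$ witnessing $s' \in S$ with intervals $(s',s^*]$ and $(s^*,s^*+\delta'']$ for $\delta'' < \delta$ yields $s^*+\delta'' \in S$, forcing $s^* = t$, and a symmetric extension at $t$ itself shows $t \in S$. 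The delicate point, and what I expect to be the main obstacle, is ensuring that $s^*$ is inserted as a partition point in order to separate its left-limit behavior from its right-continuous behavior; a minor case split according as $s' < s^*$ or $s' = s^*$ handles this.

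For (ii), I would argue by contradiction. If $S_C := \{u \in [0,t] \mid \lVert \Delta f(u) \rVert \geq C \text{ for some } f \in \mathcal{F}\}$ were infinite, compactness would furnish an accumulation point $u^* \in [0,t]$, a sequence $u_n \in S_C \setminus \{u^*\}$ with $u_n \to u^*$, and $f_n \in \mathcal{F}$ with $\lVert \Delta f_n(u_n) \rVert \geq C$. Equi-right-regularity at $u^*$ supplies $\delta > 0$ such that $\lVert f(v) - f(u^*) \rVert < C/3$ for $v \in [u^*,u^*+\delta)$ and $\lVert f(v) - f(u^*-) \rVert < C/3$ for $v \in (u^*-\delta,u^*)$, uniformly in $f \in \mathcal{F}$. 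For large $n$, $u_n$ and a sequence approaching $u_n$ from the left both lie in the same one-sided neighborhood of $u^*$, so that $f_n(u_n)$ and $f_n(u_n-)$ lie within $C/3$ of the same limit, forcing $\lVert \Delta f_n(u_n) \rVert < 2C/3 < C$, a contradiction.

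For (iii), I interpret the hypothesis as $\lVert \Delta f(s) \rVert \leq C$ for all $s \in [0,t]$ and $f \in \mathcal{F}$, since the literal reading $\lVert f(s) \rVert < C$ yields only the trivial bound $2C$ across a jump, whereas the target estimate $C+\varepsilon$ is natural for jump sizes. Apply (i) with $\varepsilon/3$ to obtain $\pi = \{(t_{i-1},t_i] \mid 1 \leq i \leq N\} \in \Par([0,t])$ with $\omega(f,[t_{i-1},t_i)) < \varepsilon/3$ uniformly in $f \in \mathcal{F}$, and set $\delta = \min_i (t_i - t_{i-1})$. For $s,r \in [0,t]$ with $|s-r| < \delta$, the indices of the oscillation sets $[t_{i-1},t_i)$ and $[t_j,t_{j+1})$ containing $s$ and $r$ differ by at most one; if equal, $\lVert f(s)-f(r) \rVert < \varepsilon/3$, while in the adjacent case with common endpoint $t_i$ the triangle inequality gives $\lVert f(s)-f(r) \rVert \leq \lVert f(s)-f(t_i-) \rVert + \lVert \Delta f(t_i) \rVert + \lVert f(t_i)-f(r) \rVert < \varepsilon/3 + C + \varepsilon/3 < C+\varepsilon$, with the endpoints $s \in \{0,t\}$ treated analogously.
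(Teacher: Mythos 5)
Your proposal is correct. Parts (i) and (iii) follow essentially the same route as the paper: the same supremum-extension set $\mathbb{T} = \{t' \mid \exists\pi \in \Par([0,t'])\ \text{with}\ O^-(f,\pi)<\varepsilon\ \forall f\}$ for (i), and for (iii) the same device of choosing $\delta$ below the minimal gap of a partition from (i) and splitting according to whether $\lrbrack r,s\rbrack$ contains a partition point. Your reading of the hypothesis of (iii) as a bound $\lVert \Delta f(s)\rVert \leq C$ on jump sizes is exactly what the paper's own proof uses (it estimates the middle term by $\lVert \Delta f(u)\rVert \leq C$), so you have correctly identified and repaired what appears to be a typo in the statement; you are also more careful than the printed proof in writing $f(t_i-)$ rather than $f(t_i)$ in the left-hand term of the triangle inequality. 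The one genuine divergence is (ii): the paper deduces it in two lines from (i), by taking a partition with $O^-(f,\pi)<C$ for all $f\in\mathcal{F}$ and observing that any $s$ with $\lVert\Delta f(s)\rVert \geq C$ must lie in the finite set $\pi^{\mathrm{p}}$ of partition endpoints, whereas you give a self-contained compactness argument via an accumulation point of the putative infinite jump set and a $2C/3$ contradiction. Both are valid; the paper's is shorter and exhibits (ii) as a corollary of (i), while yours does not depend on (i) at all and makes the role of equi-right-regularity at the accumulation point more explicit.
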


\begin{proof}
(i)
Fix $t > 0$. Define
\begin{equation*}
    \mathbb{T} =
    \{ t' \in [0,t]
        \mid
        \text{there exists a partition $\pi \in \Par[0,t']$ satisfying
        $O^-(f,\pi) < \varepsilon$ for all $f \in \mathcal{F}$}
    \}
\end{equation*}
and $t^* = \sup \mathbb{T}$.
Then it suffices to show that $t^* = t$ and $t^* \in \mathbb{T}$.

Note that $\mathbb{T} \neq \emptyset$ and hence $t^* > 0$
by the equi-right continuity of $\mathcal{F}$ at $0$.
Since $\mathcal{F}$ is equi-right-regular at $t^*$, we can
take a $\delta \in \lrbrack 0, t^* \rlbrack$ such that 
$\lVert f(s) - f(t^*-) \rVert_E < \varepsilon$
for all $f \in \mathcal{F}$
and $s \in \lrbrack t^*-\delta,t^* \rlbrack$.
Next, choose a $t' \in \lrbrack t^*-\delta, t^* \rbrack \cap \mathbb{T}$ and 
a $\pi' \in \Par([0,t'])$ so that
$O^-(f,\pi') < \varepsilon$.
Then the partition
$\pi'' = \pi' \cup \{ \lrbrack t',t^* \rbrack \} \in \Par([0,t^*])$
satisfies $O^-(f,\pi'') < \varepsilon$.
Hence $t^* \in T$.

Now assume that $t^* < t$.
Since $\mathcal{F}$ is equi-right-regular at $t^\ast$,
we can take a $\tilde{t} > t^*$ such that 
$\lVert f(t^*) - f(s) \rVert < \varepsilon$
for all $f \in \mathcal{F}$ and $s \in [t^*,\tilde{t} \rlbrack$.
Moreover, choose a $\pi \in \Par([0,t^*])$
satisfying $O^-(f,\pi) < \varepsilon$
for all $f \in \mathcal{F}$.
Then the new partition
$\tilde{\pi} = \pi \cup \{ \lrbrack t^*,\tilde{t} \rbrack \}$
satisfies $O^-(f,\tilde{\pi}) < \varepsilon$ for all $f \in \mathcal{F}$
and thus we obtain $\tilde{t} \in T$.
This contradicts the definition of $t^*$.

(ii)
Let $C > 0$ and $t > 0$.
Then, by (i), we can choose a $\pi \in \Par([0,t])$
satisfying $O^-(f,\pi) < C$ for all $f \in \mathcal{F}$.
Then we see that
\begin{equation*}
    \{
            s \in [0,t]
            \mid
            \text{
                $\lVert \Delta f(s) \rVert \geq C$
                for some $f \in \mathcal{F}$
            }
    \}
    \subset \pi^{\mathrm{p}}.
\end{equation*}
Since $\pi^{\mathrm{p}}$ is finite, by definition,
the set of jumps of $\mathcal{F}$ greater than $C$
is also finite.

(iii)
Fix $\varepsilon > 0$ arbitrarily
and choose a $\pi \in \Par([0,t])$
such that $O^-(f,\pi) < \varepsilon/2$ for all $f \in \mathcal{F}$.
Moreover, take a $\delta \in \lrbrack 0,\lvert \pi \rvert \rlbrack$.
Then, each nonempty interval $\lrbrack r,s \rbrack \subset [0,t]$ satisfying
$\lvert s -r \rvert < \delta$ contains at most one element of $\pi^{\mathrm{p}}$.
If there is no element of $\pi^{\mathrm{p}}$ in $\lrbrack r,s \rbrack$,
we have
\begin{equation*}
    \lVert f(s) - f(r) \rVert_E
    \leq O^-(f,\pi)
    < \varepsilon
\end{equation*}
for all $f \in \mathcal{F}$.
On the other hand, if 
$\lrbrack r,s \rbrack$ contains a $u \in \pi^{\mathrm{p}}$,
then
\begin{align*}
    \lVert f(s) - f(r) \rVert_E
    & \leq
    \lVert f(r) - f(u) \rVert_E
    + \lVert \Delta f(u) \rVert_E
    + \lVert f(u) - f(s) \rVert_E   \\
    & \leq
    2O^-(f,\pi) + C
    < C + \varepsilon
\end{align*}
for all $f \in \mathcal{F}$.
This completes the proof.
\end{proof}

The notion of equi-right-regularity
is useful to characterize the compactness
in the space $D(\mathbb{R}_{\geq 0},E)$
with the topology of uniform convergence on compact subsets.

\begin{Prop} \label{a2.3b}
For $\mathcal{F} \subset D(\mathbb{R}_{\geq 0},E)$, the following conditions are equivalent:
\begin{enumerate}
    \item $\mathcal{F}$ is relatively compact with respect to
        the topology of uniform convergence on compact subsets.
    \item $\mathcal{F}$ satisfies the following conditions:
        \begin{enumerate}
            \item for any $t \geq 0$, the set $\{ f(t) \mid f \in \mathcal{F} \}$
                is relatively compact in $E$;
            \item the family $\mathcal{F}$ is equi-right-regular.
        \end{enumerate}
\end{enumerate}
\end{Prop}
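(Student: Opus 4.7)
The plan is to prove the two implications separately, with a diagonal extraction at the heart of the nontrivial direction. Two preliminary observations will be used throughout: first, the topology of uniform convergence on compact subsets is metrizable on $D(\mathbb{R}_{\geq 0},E)$ (via the seminorms $f\mapsto \sup_{[0,k]}\lVert f\rVert$, $k\in\mathbb{N}$), so relative compactness is equivalent to sequential relative compactness; second, the uniform limit on a compact interval of càdlàg paths is again càdlàg, and $f_n(t-)\to f(t-)$ whenever $f_n\to f$ uniformly on a neighbourhood of $t$. Both facts follow by elementary triangle-inequality arguments.

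For (i) $\Rightarrow$ (ii), assertion (a) is immediate since the evaluation $f\mapsto f(t)$ is continuous from $D(\mathbb{R}_{\geq 0},E)$ into $E$. For (b) I argue by contradiction. If equi-right-regularity fails at some $t_0>0$, there exist $\varepsilon>0$, $f_n\in\mathcal{F}$, and $s_n\to t_0$ such that either $s_n\geq t_0$ and $\lVert f_n(s_n)-f_n(t_0)\rVert\geq\varepsilon$, or $s_n<t_0$ and $\lVert f_n(s_n)-f_n(t_0-)\rVert\geq\varepsilon$ (the failure of equi-right-continuity at $0$ being handled analogously). Relative compactness together with metrizability yields a subsequence $f_{n_j}\to f$ uniformly on compacts; by the preliminary observation $f$ is càdlàg and $f_{n_j}(t_0-)\to f(t_0-)$. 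Inserting $f(s_{n_j})$ together with $f(t_0)$ (resp.\ $f(t_0-)$) into the displayed difference and using right-continuity of $f$ at $t_0$ (resp.\ the existence of the left limit $f(t_0-)$) together with the uniform convergence forces the left-hand side to tend to zero, contradicting the lower bound $\varepsilon$.

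For (ii) $\Rightarrow$ (i), fix a sequence $(f_n)\subset\mathcal{F}$ and a horizon $T>0$. By Proposition~\ref{a2.1d}(i), for each $k\in\mathbb{N}$ there is a partition $\pi_k\in\Par([0,T])$ with $O^-(f,\pi_k)<1/k$ simultaneously for all $f\in\mathcal{F}$. Let $D=\bigcup_k\pi_k^{\mathrm{p}}$; this is a countable subset of $[0,T]$. Hypothesis (a) combined with a Cantor diagonal argument produces a subsequence $(f_{n_j})$ such that $f_{n_j}(t)$ converges in $E$ for every $t\in D$. Given $\varepsilon>0$, choose $k$ with $1/k<\varepsilon/3$ and write $\pi_k^{\mathrm{p}}=\{r_0<\cdots<r_m\}$; every $t\in[0,T]$ lies in some $\lbrack r_i,r_{i+1}\rlbrack$, whence $\lVert f_{n_j}(t)-f_{n_j}(r_i)\rVert<\varepsilon/3$ for all $j$. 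A triangle inequality exploiting the Cauchy property of $(f_{n_j}(r_i))_j$ on the finite set $\pi_k^{\mathrm{p}}$ then shows that $(f_{n_j})$ is uniformly Cauchy on $[0,T]$; completeness of $E$ together with the preliminary observation furnishes a càdlàg uniform limit. Diagonalizing over $T=1,2,\dots$ completes the proof.

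The main technical point I expect, used in both directions, is the control of one-sided limits under uniform convergence: that $f_n(t-)\to f(t-)$ when $f_n\to f$ uniformly on $[0,T]$, and that the limit $f$ is itself càdlàg. This is precisely what allows the subsequential limit in the second direction to lie in $D(\mathbb{R}_{\geq 0},E)$ and what produces the contradiction in the first direction; everything else reduces to diagonal extraction and the triangle inequality.
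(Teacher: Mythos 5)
Your proposal is correct, but it takes a genuinely different route from the paper's proof. The paper argues in the language of uniformities and filters: for (i)~$\Rightarrow$~(ii) it uses total boundedness directly, covering $\mathcal{F}$ by a finite $\varepsilon/3$-net $U_{\varepsilon/3,t}(f_1),\dots,U_{\varepsilon/3,t}(f_N)$ and transferring the right-regularity of the finitely many $f_i$ to all of $\mathcal{F}$; for (ii)~$\Rightarrow$~(i) it takes an arbitrary ultrafilter on $\mathcal{F}$, obtains a pointwise limit $g$ in $E^{\mathbb{R}_{\geq 0}}$ from condition~(a), verifies that $g$ is {\cadlag} using equi-right-regularity, and then upgrades the convergence to uniform convergence on compacts via a finite mesh $0=t_0<\dots<t_N=t$ controlling the oscillation of every $f \in \mathcal{F}\cup\{g\}$. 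You instead observe that the topology is metrizable (countably many seminorms $\sup_{[0,k]}$), reduce to sequential compactness, and run the classical Arzel\`{a}--Ascoli scheme: a Cantor diagonal extraction over the countable set of partition points supplied by Proposition~\ref{a2.1d}~(i), a uniform Cauchy estimate via the triangle inequality, and completeness of $E$; the converse direction you handle by contradiction along a convergent subsequence rather than by a finite net. Both arguments ultimately rest on the same uniform oscillation control (Proposition~\ref{a2.1d}~(i) in your case, its ad hoc analogue for $\mathcal{F}\cup\{g\}$ in the paper's), and your auxiliary facts --- that a uniform limit of {\cadlag} paths is {\cadlag} and that $f_n(t-)\to f(t-)$ under uniform convergence near $t$ --- are correct and carry the weight you assign them. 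What your approach buys is elementarity and concreteness; what the paper's buys is independence from metrizability, so that the same argument would survive in settings where the index set is not hemicompact or the target is a general uniform space, and it handles nets and filters without any countability bookkeeping.
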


\begin{proof}
For convenience, let $\mathcal{O}_{\mathrm{p}}$ and
$\mathcal{O}_{\mathrm{c}}$ denote the topology of pointwise convergence
and that of uniform convergence on compact sets on $D(\mathbb{R}_{\geq 0},E)$, respectively. 
Given an $\varepsilon > 0$ and a compact interval $[0,t]$,
define the entourage $U_{\varepsilon,t}$ in $D(\mathbb{R}_{\geq 0},E)$ as
\begin{equation*}
    U_{\varepsilon,t}
    =
    \{ 
        (f,g) \in D(\mathbb{R}_{\geq 0},E) \times D(\mathbb{R}_{\geq 0},E)
        \mid
        \text{
            $\lVert f(s) - g(s) \rVert < \varepsilon$
            for all $s \in [0,t]$
        }
    \}.
\end{equation*}
Then the uniformity of uniform convergence on compact sets on $D(\mathbb{R}_{\geq 0},E)$
is generated by the family $(U(\varepsilon,t); \varepsilon,t > 0 )$.

\emph{(i)~$\implies$~(ii).}
Suppose that $\mathcal{F}$ is relatively compact in $D(\mathbb{R}_{\geq 0},E)$
with respect to $\mathcal{O}_{\mathrm{c}}$.
Because every evaluation mapping 
$\mathrm{ev}_t \colon D(\mathbb{R}_{\geq 0},E) \to E$ is continuous 
with respect to $\mathcal{O}_{\mathrm{p}}$, and hence to $\mathcal{O}_{\mathrm{c}}$,
the image $\mathrm{ev}_t(\mathcal{F})$ is relatively compact in $E$.

Next, we show that $\mathcal{F}$ is equi-right-regular.
Let $s \geq 0$ and choose a positive number $t$ with $t > s$.
Given an $\varepsilon > 0$ and $f \in \mathcal{F}$, set
\begin{equation*}
    U_{\varepsilon,t}(f)
    =
    \{ 
        g \in  D(\mathbb{R}_{\geq 0},E)
        \mid
        (f,g) \in U_{\varepsilon,t}
    \}.
\end{equation*} 
Since $\mathcal{F}$ is totally bounded,
there are finite elements $f_1,\dots,f_N \in \mathcal{F}$ such that
\begin{equation*}
    \mathcal{F} \subset \bigcup_{1 \leq i \leq N} U_{\varepsilon/3,t}(f_i).
\end{equation*}
For each $i \in \{ 1,\dots, N \}$, choose a $\delta_i \in \lrbrack 0,t-s \rlbrack$
such that
\begin{equation*}
    u \in [s,s+\delta_i \rlbrack \implies \lVert f(u) - f(s) \rVert < \frac{\varepsilon}{3}.
\end{equation*}
Next, we define $\delta = \bigwedge_{1 \leq i \leq N} \delta_i$.
For $f \in \mathcal{F}$, choose an $i \in \{ 1,\dots, N \}$
such that $f \in U_{\varepsilon/3,t}(f_i)$.
Then for all $u \in \lbrack s, s+\delta \rlbrack$,
\begin{align*}
    \lVert f(u) - f(s) \rVert
    & \leq
    \lVert f(u) - f_i(u) \lVert 
    + \lVert f_i(u) - f_i(s) \rVert 
    + \lVert f_i(s) - f(s)  \rVert   \\
    & \leq
    \frac{\varepsilon}{3} + \frac{\varepsilon}{3} + \frac{\varepsilon}{3}
    =
    \varepsilon.
\end{align*}
Similarly, if $s > 0$ we can choose a $\gamma > 0$ that fulfils
\begin{equation*}
    \lVert f(u) - f(s-) \rVert < \varepsilon
\end{equation*}
for all $f \in \mathcal{F}$ and $u \in \lrbrack s-\gamma,s \rlbrack$.
Thus, we can conclude that $\mathcal{F}$ is equi-right-regular.

\emph{(ii)~$\implies$~(i).}
Assume that $\mathcal{F}$ satisfies conditions~(a) and (b).

Take an arbitrary ultrafilter $\mathfrak{B}$ on $\mathcal{F}$.
Condition~(a) implies that
$\prod_{t \in \mathbb{R}_{\geq 0}} \mathrm{ev}_t(\mathcal{F})$
is totally bounded in $E^{\mathbb{R}_{\geq 0}}$
with respect to the uniformity of pointwise convergence\footnote{
    This coincides with the restriction of product uniformity.
}.
Therefore, the filter generated by $\mathfrak{B}$ on $E^{\mathbb{R}_{\geq 0}}$,
which is indeed an ultrafilter,
converges to an element $g$ of $E^{\mathbb{R}_{\geq 0}}$.

Now let us show that $g \in D(\mathbb{R}_{\geq 0},E)$.
Fix a $t \geq 0$ and an $\varepsilon$,
and choose a corresponding $\delta > 0$ such that
\begin{equation*}
    \lVert f(t) - f(s) \rVert < \frac{\varepsilon}{3}
\end{equation*}
holds for all $f \in \mathcal{F}$ and all $s \in [t,t+\delta \rlbrack$.
For an arbitrary $s \in [t,t+\delta \rlbrack$, 
choose a $B \in \mathfrak{B}$ satisfying both
$\mathrm{ev}_t(B) \subset U(g(t),\varepsilon/3)$
and $\mathrm{ev}_s(B) \subset U(g(s),\varepsilon/3)$.
Such a $B$ indeed exists, because $g(t)$ and 
$g(s)$ are cluster points of
$\mathrm{ev}_t(\mathfrak{B})$ and $\mathrm{ev}_s(\mathfrak{B})$,
respectively, and $\mathfrak{B}$ is a filter.
Then,
\begin{align*}
    \lVert g(t) - g(s) \rVert
    & \leq
    \lVert g(t) - f(t) \rVert + \lVert f(t) - f(s) \rVert + \lVert f(s) - g(s) \rVert \\
    & <
    \frac{\varepsilon}{3} + \frac{\varepsilon}{3} + \frac{\varepsilon}{3}
    =
    \varepsilon.
\end{align*}
Since $s$ is an arbitrary element of $[t,t+\delta \rlbrack$,
we can conclude that $g$ is right continuous at $t$.

Next, we show that $g$ has a left limit at every $t > 0$.
Let $t > 0$ and $\varepsilon > 0$,
and choose a $\gamma \in \lrbrack 0, t \rlbrack$ such that
\begin{equation*}
    \lVert f(s) - f(t-) \rVert < \frac{\varepsilon}{4}
\end{equation*}
holds for all $f \in \mathcal{F}$ and all $s \in \lrbrack t-\gamma, t \rlbrack$.
For an arbitrarily chosen $u,s \in \lrbrack t-\gamma, t \rlbrack$,
choose a $B \in \mathfrak{B}$ that meets
both $\mathrm{ev}_u(B) \subset U(g(u);\varepsilon/4)$
and $\mathrm{ev}_s(B) \subset U(g(s);\varepsilon/4)$.
Then, using a $f \in B$, we obtain the estimate
\begin{align*}
    \lVert g(u) - g(s) \rVert
    & \leq 
    \lVert g(u) - f(u) \rVert
    + \lVert f(u) - f(t-) \rVert
    + \lVert f(t-) - f(s) \rVert
    + \lVert f(s) - g(s) \rVert   \\
    & < 
    \frac{\varepsilon}{4} + \frac{\varepsilon}{4}+\frac{\varepsilon}{4} + \frac{\varepsilon}{4}
    =
    \varepsilon.
\end{align*}
This estimate, along with the completeness of $E$,
implies the existence of the left limit 
$\lim_{\substack{s \uparrow\uparrow t}} g(s)$.

It remains to show that the filter base
$\mathfrak{B}$, considered as a filter base on $D(\mathbb{R}_{\geq 0},E)$,
converges to $g$ with respect to $\mathcal{O}_{\mathrm{c}}$.
For $\varepsilon > 0$ and $t > 0$,
take a finite sequence
\begin{equation*}
    0 = t_0 < t_1 < \dots < t_N = t
\end{equation*}
such that
\begin{equation*}
    \omega(f; \lbrack t_i,t_{i+1} \rlbrack ) < \frac{\varepsilon}{3}
\end{equation*}
for all $i \in \{ 0,\dots, N-1 \}$ and $f \in \mathcal{F} \cup \{ g \}$.
Moreover, choose a $B \in \mathfrak{B}$
such that $\mathrm{ev}_{t_i}(B) \subset B(g(t_i),\varepsilon/3)$
for all $i \in \{ 0,\dots,N \}$.
Let $f \in B$ and arbitrarily choose an $s \in [0,t]$.
If $s \in \{ t_0, \dots, t_N \}$, we have
\begin{equation*}
    \lVert f(s) - g(s) \rVert < \frac{\varepsilon}{3}
\end{equation*}
by the choice of $B$.
On the other hand, if $s \in \lrbrack t_i,t_{i+1} \rlbrack$
for some $i \in \{ 0,\dots, N-1 \}$, we see that
\begin{equation*}
    \lVert f(s) - g(s) \rVert 
\leq
    \lVert f(s) - f(t_i) \rVert + \lVert f(t_i) - g(t_i) \rVert + \lVert g(t_i) - g(s) \rVert
< \varepsilon.
\end{equation*}
Therefore, $f \in U_{\varepsilon,t}(g)$.
The arbitrariness of $f$ leads to $B \subset U_{\varepsilon,t}(g)$.
Thus, the filter base $\mathfrak{B}$ on $D(\mathbb{R}_{\geq 0},E)$
converges to $g$.
\end{proof}

\begin{Prop} \label{a2.3c}
Suppose that $\mathcal{F} \subset D(\mathbb{R}_{\geq 0},E)$
is relatively compact with respect to the topology of
uniform convergence on compact sets.
If a sequence $(\pi_n)$ of partitions of $\mathbb{R}_{\geq 0}$
satisfies (UC1), (UC2), and (C3) for $\mathcal{F}$,
then it satisfies (UC) for $\mathcal{F}$.
\end{Prop}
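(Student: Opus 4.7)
Since (UC1) and (UC2) are already assumed, the task is to establish (UC3): for every $t > 0$,
\[
\varlimsup_{\varepsilon \downarrow\downarrow 0} \varlimsup_{n \to \infty} \sup_{f \in \mathcal{F}} O_t^+(f - J_\varepsilon(f), \pi_n) = 0.
\]
The plan is to leverage the relative compactness of $\mathcal{F}$ to reduce this uniform bound to finitely many pathwise instances of (C3), combined with a case split driven by (UC1).

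Fix $t > 0$ and $\eta > 0$. By Proposition~\ref{a2.3b}, $\mathcal{F}$ is equi-right-regular and the restriction $\mathcal{F}\vert_{[0,t]}$ is totally bounded in the sup-norm. I select $f_1, \dots, f_N \in \mathcal{F}$ so that every $f \in \mathcal{F}$ admits some $f_i$ with $\sup_{s \in [0,t]} \lVert f(s) - f_i(s) \rVert < \eta/16$. Applying (C3) to each $f_i$ gives $\bar\varepsilon > 0$ and $\bar N \in \mathbb{N}$ with $O_t^+(f_i - J_\varepsilon(f_i), \pi_n) < \eta/8$ for all $i$, all $\varepsilon \in (0, \bar\varepsilon)$, and all $n \geq \bar N$. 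By Proposition~\ref{a2.1d}(ii), $U := D_\varepsilon(\mathcal{F}) \cap [0,t]$ is finite for each $\varepsilon$, and (UC1) furnishes $N'_\varepsilon$ beyond which every $I \in \pi_n$ meets $U$ in at most one point.

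Now fix $\varepsilon \in (0, \bar\varepsilon \wedge \eta/8)$ and $n \geq \bar N \vee N'_\varepsilon$. For any $I = \lrbrack r,s' \rbrack \in \pi_n$ with $I \cap [0,t] \neq \emptyset$ and any $f \in \mathcal{F}$ approximated by some $f_i$, I split into two cases. If $I \cap [0,t] \cap U = \emptyset$, then $D_\varepsilon(f) \cap I \cap [0,t] \subset U \cap I = \emptyset$ (and likewise for $f_i$), so $J_\varepsilon(f)$ and $J_\varepsilon(f_i)$ are constant on $I \cap [0,t]$, giving
\[
\omega(f - J_\varepsilon(f), I \cap [0,t]) = \omega(f, I \cap [0,t]) \leq \omega(f_i, I \cap [0,t]) + \eta/8 < \eta/4.
\]
If $I \cap [0,t] \cap U = \{u\}$, splitting at $u$ and using that $u \notin D_\varepsilon(f)$ implies $\lVert \Delta f_u \rVert \leq \varepsilon$ (the only residual jump of $f - J_\varepsilon(f)$ on $I \cap [0,t]$) yields
\[
\omega(f - J_\varepsilon(f), I \cap [0,t]) \leq \omega(f, (r \vee 0, u)) + \omega(f, [u, s' \wedge t]) + \varepsilon.
\]
Since $J_\varepsilon(f_i)$ is constant on each of $(r \vee 0, u)$ and $[u, s' \wedge t]$, the Case~A argument applied separately to these subintervals bounds both $\omega(f, (r \vee 0, u))$ and $\omega(f, [u, s' \wedge t])$ by $\eta/4$, so the total stays below $5\eta/8$.

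Taking suprema over $I \in \pi_n$ and $f \in \mathcal{F}$ gives $\sup_{f \in \mathcal{F}} O_t^+(f - J_\varepsilon(f), \pi_n) \leq 5\eta/8$; letting $n \to \infty$, then $\varepsilon \downarrow 0$, and noting that $\eta$ was arbitrary establishes (UC3). The main technical obstacle is Case~B: one must isolate the common large-jump point $u \in U$, carefully track the residual jump of $f$ at $u$ (of magnitude at most $\varepsilon$), and exploit the piecewise constancy of $J_\varepsilon(f_i)$ on either side of $u$ so that (C3) applied to $f_i$ controls each sub-oscillation of $f_i$ separately. Hypothesis (UC2) does not actually enter the argument; it appears in the statement only because the conclusion (UC) subsumes it.
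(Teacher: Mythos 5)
Your proof is correct and follows essentially the same route as the paper's: a finite sup-norm net obtained from total boundedness, (C3) applied to the finitely many net elements, and (UC1) to isolate at most one large-jump point of $D_\varepsilon(\mathcal{F})$ per partition interval; your explicit case split at that point is just a more detailed bookkeeping of the single oscillation estimate the paper writes down. The only cosmetic slip is that (C3) does not furnish one $\bar N$ valid simultaneously for all $\varepsilon \in (0,\bar\varepsilon)$ --- $\bar N$ must be allowed to depend on $\varepsilon$ --- but since you fix $\varepsilon$ before letting $n \to \infty$ and only need the iterated limit superior, this changes nothing.
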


\begin{proof}
Given an arbitrary $t \geq 0$ and a $\delta > 0$, 
choose finite elements $f_1,\dots, f_N$ of $\mathcal{F}$ such that
\begin{equation*}
    \mathcal{F} \subset \bigcup_{1 \leq i \leq N} U_{t,\delta/5}(f_i).
\end{equation*}
By assumption, we can choose an $\varepsilon_0$ such that
\begin{equation*}
    \sup_{\varepsilon < \varepsilon_0} \varlimsup_{n \to \infty}
        O_t^+(f_i-J_{\varepsilon}(f_i),\pi_n) < \frac{\delta}{5} 
\end{equation*}
for all $1 \leq i \leq N$.
Moreover, for $\varepsilon < \varepsilon_0$,
choose an $N_{\varepsilon} \in \mathbb{N}$ such that 
\begin{itemize}
    \item $[0,t] \cap D_{\varepsilon}(\mathcal{F}) \cap I$ contains at most one element for all $I \in \pi_n$, and
    \item $O_t^+(f_i-J_{\varepsilon}(f_i),\pi_n) < \delta/5$ for $i \in \{ 1,\dots, N \}$
\end{itemize}
whenever $n \geq N_{\varepsilon}$.
Let $n \geq N_{\varepsilon}$,
and arbitrarily choose $I \in \pi_n$ and $f \in \mathcal{F}$.
Then for any $u,v \in I$, we have
\begin{align*}
    &
    \lVert (f-J_{\varepsilon}(f))(u) - (f-J_{\varepsilon}(f))(v) \rVert   \\
    & \qquad \leq 
    O^+_t(f_i - J_{\varepsilon}(f_i)) + 2 \sup_{s \in [0,t]} \lVert f_i(s) - f(s) \rVert + \sum_{s \in I \cap D_{\varepsilon}} \left\{ \lVert \Delta f(s) - \Delta f_i(s) \rVert \right\}  \\
    & \qquad \leq
    O^+_t(f_i - J_{\varepsilon}(f_i)) + 4 \sup_{s \in [0,t]} \lVert f_i(s) - f(s) \rVert   \\
    & \qquad \leq 
    \delta.
\end{align*}
Therefore, we can see that
\begin{equation*}
    \varlimsup_{n \to \infty} \sup_{f \in \mathcal{F}} O_{t}^+(f-J_{\varepsilon}(f);\pi_n) \leq \delta
\end{equation*}
holds for all $\varepsilon \leq \varepsilon_0$.
This shows that
\begin{equation*}
    \varlimsup_{\varepsilon \downarrow \downarrow 0}
        \varlimsup_{n \to \infty} \sup_{f \in \mathcal{F}} 
        O_{t}^+(f-J_{\varepsilon}(f);\pi_n)
    = 0,
\end{equation*}
which is the assertion of the proposition.
\end{proof}

\subsection*{Acknowledgements}
The author thanks his supervisor Professor Jun Sekine for his helpful support and encouragement.
The author also thanks Professor Masaaki Fukasawa and Professor Masanori Hino for their helpful comments and discussion. 
This work was partially supported by
JSPS KAKENHI Grant Number JP19H00643.

\printbibliography[heading=bibintoc]

Graduate School of Science,
Kyoto University, Kyoto, Japan

\emph{E-mail}: hirai.yuki.7j@kyoto-u.ac.jp
\end{document}